\documentclass[smallextended,envcountsect]{svjour3} 
% The option smallextended is the standard JOTA format. 
% The option referee makes the paper double-spaced.
% The option envcountsect numbers theorems, etc, by section.
% svjour3 is the document class for Springer journals.  
\smartqed 
%\journalname{JOTA}

%This command right justifies \qed throughout the paper. 
\usepackage[utf8x]{inputenc}
\usepackage[margins]{trackchanges}
\usepackage{rotating}
\usepackage{xr}
\usepackage{makecell}  % For multi-line cells
\usepackage{booktabs}  % For professional-looking tables
\usepackage{rotating}  % For rotating table
\usepackage{pifont, placeins}
%\externaldocument{main}
%\usepackage[center]{caption}
\usepackage{parskip}
\usepackage{array}

\usepackage{pifont}
\usepackage{comment}
\usepackage{graphicx}
\usepackage{algorithm,algpseudocode}
\usepackage{mathrsfs}
\usepackage{amsmath}  
\usepackage{mathtools}
\usepackage{amssymb}
\usepackage{amsfonts} 
\usepackage{threeparttable}
\usepackage{cite}
\usepackage{hyperref}
\usepackage{url}
\usepackage{color}
\usepackage{soul}
\allowdisplaybreaks[1]
\usepackage{pdflscape}

\newtheorem{observation}{Observation}
\newtheorem{assumption}{Assumption}

\DeclarePairedDelimiterX\Set[2]{\lbrace}{\rbrace}%
 { #1 \,\delimsize| \,\mathopen{} #2 }
\newcommand\norm[1]{\left\lVert#1\right\rVert} % for big norm 
\newcommand{\bs}[1]{\boldsymbol{#1}}
\newcommand{\E}{\mathbb{E}}

\newcommand{\mX}{\mathcal{X}}
\newcommand{\mY}{\mathcal{Y}}
\newcommand{\mP}{\mathcal{P}}

\newcommand{\mbR}{\mathbb{R}}
\newcommand{\mI}{\mathcal{I}}

\newcommand{\cN}{\mathcal{N}}
\newcommand{\conv}{\textrm{conv}}
\newcommand{\mQ}{\mathcal{Q}}

\newcommand{\dom}{\textrm{dom}}
\newcommand{\ri}{\mathrm{relint}}
\usepackage{multirow}

\newcommand{\opt}{\textrm{OPT}}
\newcommand{\mtc}{\mathcal}

\newcommand{\mF}{\mathcal{F}}

\newcommand{\mbZ}{\mathbb{Z}}

\newcommand{\cS}{\mathcal{S}}
\newcommand{\mC}{\mathcal{C}}

\newcommand{\dist}{\textrm{dist}}
\DeclarePairedDelimiterX{\inp}[2]{\langle}{\rangle}{#1, #2}

\newcommand{\xe}{x^{\epsilon}}
\newcommand{\wt}{\widetilde}

\newcommand{\beq}{\begin{equation}}
\newcommand{\eeq}{\end{equation}}
\newcommand{\ba}{\begin{aligned}}
\newcommand{\ea}{\end{aligned}}
\newcommand{\bdm}{\begin{displaymath}}
\newcommand{\edm}{\end{displaymath}}

\newcommand{\sub}{\textrm{Sub}}

\usepackage{xcolor}
\newcommand{\qrevision}[1]{{\color{black}{#1}}}
\newcommand{\srevision}[1]{{\color{black}{#1}}}

\begin{document}
\title{A Cutting-plane and Benders' Decomposition Algorithm for 
Two-Stage Distributionally Robust Convex programs}
\author{Fengqiao Luo \and Shibshankar Dey  \and  Sanjay Mehrotra }
\institute{Fengqiao Luo \at
              Department of Industrial Engineering and Management Science, Northwestern University \\
              Evanston, Illinois\\
              fengqiaoluo2014@u.northwestern.edu
            \and
            Shibshankar Dey \at
              Department of Industrial Engineering and Management Science, Northwestern University \\
              Evanston, Illinois\\
              shibshankardey2025@u.northwestern.edu 
           \and
              Sanjay Mehrotra,  Corresponding author  \at
              Department of Industrial Engineering and Management Science, Northwestern University \\
              Evanston, Illinois\\
              mehrotra@northwestern.edu
}

\date{Received: date / Accepted: date}
\maketitle
\numberwithin{equation}{section}
\begin{abstract}
We present a finitely convergent cutting-plane algorithm for solving a general mixed-integer convex program given an oracle for solving a general convex program. This method is extended to solve a family of two-stage mixed-integer convex programs using cutting planes, with applications to solving distributionally-robust two-stage stochastic mixed-integer convex programs. Analysis is also given for the case where convex programming oracle provides an $\epsilon-$optimal solution. We combine the cut generation with a branch-and-union scheme to develop a more practical algorithm. Computational results on generated test problems show the practicality of our algorithm. Specifically, results show that in the tested problems our algorithm achieves $<5\%$ optimality gap in 12 hours. This gap is $>17\%$ with a commercial solver.
\end{abstract}

\section{Introduction}
We consider a family of mixed-integer convex programs in the form:
\beq\label{opt:micp-xy}
\ba
&\min\; c^\top x + h^\top y \\
&\;\text{ s.t. } \srevision{Tx + Wy = q,} \\
& \qquad\; g_i(x,y)\le 0 \quad\forall i\in\mtc{I}, \\
&\qquad\; x\in\mtc{X}\cap\{0,1\}^{l_1},\; y\in\mtc{Y}\cap(\mbZ^{l_2}\times\mbR^{l_3}),
\ea
\tag{JMICP}
\eeq
where $x$ are binary variables, and $y$ are
mixed-integer variables. \srevision{$Tx + Wy = q$ represents polyhedral constraints.} The functions $g_i(x,y)$ for 
$i\in\mtc{I}$ are convex but not necessarily differentiable, 
and $\mtc{X}$ and $\mtc{Y}$ are bounded polyhedral sets for $x$ and $y$ including simple-bound constraints.
Note that the following problem can be formulated as \eqref{opt:micp-xy}:
\beq\label{opt:micp-xy-conv-obj}
\ba
&\min\; g_0(x,y) \\
&\;\text{ s.t. } \srevision{Tx + Wy = q,} \\
& \qquad\; g_i(x,y)\le 0 \quad\forall i\in\mtc{I}, \\
&\qquad\; x\in\mtc{X}\cap\{0,1\}^{l_1},\; y\in\mtc{Y}\cap(\mbZ^{l_2}\times\mbR^{l_3}), 
\ea
\eeq
where $g_0(x,y)$ is a convex function of ${(x,y)}$. It can be achieved 
by moving the convex objective to the set of constraints with an auxiliary variable $\eta$
as $g_0(x,y)\le \eta$. In this case, the new objective is to minimize $\eta$ and we redefine 
a new $\mtc{Y}$-space variable $\tilde{y}$ as $\tilde{y}:={(y,\eta)}$, and \eqref{opt:micp-xy-conv-obj}
set with $c=\bs{0}$, $h=(\bs{0}, 1)$. 
An important special case of \eqref{opt:micp-xy-conv-obj} is the following two-stage
stochastic mixed-integer convex program (TSS-MICP) under finite support assumption:
\beq\label{opt:TSS-MICP-intro}
\ba
&\min_x\; c(x) + \E_{\xi}[\mQ(x,\xi)] \\
&\textrm{ s.t. }\; Ax\le b,\; x\in\mC, \\
&\qquad\; x\in\{0,1\}^{l_1},
\ea
\eeq
where $x$ is the vector of first-stage variables that are pure binary, $\mtc{C}$ is a closed convex set, 
and $\xi$ is a vector of random parameters that follow the joint distribution $P$ 
on a finite support $\Omega$.
The recourse function $\mQ(x,\xi^\omega)$ for scenario $\omega\in\Omega$ is given as:
\beq\label{opt:second-stage-intro}
\ba
&\mQ(x,\xi^\omega)=\min_{y^{\omega}}\; h^{\omega\top}y^{\omega} \\
&\qquad\qquad\quad\textrm{ s.t. } \srevision{T^{\omega}x + W^{\omega}y^{\omega} = q^{\omega},} \\
& \qquad\qquad\qquad\quad 
g^{\omega}_i(x,y^{\omega})\le 0 \qquad\forall i\in \cal I, \\
&\qquad\qquad\qquad\quad  y^{\omega}\in\mbZ^{l_2}\times\mbR^{l_3},
\ea
\eeq
where $g_j$ are convex functions.
Under finiteness assumption of support set $\Omega$, \eqref{opt:TSS-MICP-intro}--\eqref{opt:second-stage-intro} 
admit an extended reformulation in terms of variables $x$ and $y^\omega$ for $\omega\in\Omega$:
\beq\label{opt:extend}
\ba
&\min\; c(x) + \sum_{\omega\in\Omega}p^\omega h^{\omega\top}y^\omega \\
&\textrm{ s.t. }\; \srevision{T^{\omega}x + W^{\omega}y^{\omega} = q^{\omega},}\\
&\qquad\; g^{\omega}_i(x, y^{\omega})\le 0 \qquad \qquad \qquad \, \forall i\in \cal I, \\
&\qquad Ax\le b, \; x\in\mC,\\
&\qquad x\in\{0,1\}^n, \; y^{\omega}\in\mbZ^{l_1}\times\mbR^{l_2}, \; {\forall \omega \in \Omega,}
\ea
\eeq
where $p^\omega$ is the probability of scenario $\omega$.
Note that \eqref{opt:extend} is in the form of \eqref{opt:micp-xy-conv-obj}. 
Thus \eqref{opt:extend} can be solved directly using an algorithm for solving {mixed-integer} convex programs. 
However, a Benders' decomposition approach is typically preferable when the number of scenarios is large.

In addition to developing an algorithm for solving {mixed-integer} convex programs, a major result of this paper is to develop a finitely convergent cutting-plane decomposition algorithm as well as a linearize-branch-and-union decomposition
algorithm for solving \eqref{opt:micp-xy} under mild regularity conditions.  Parametric cuts are generated within the decomposition algorithm. An oracle for generating the parametric cuts is also developed. 
We consider the following notions and assumptions in this paper.

\begin{comment}
\begin{definition}
A closed convex set $S\subseteq\mbR^k$ is \textit{finite-smooth-representable} if  
$S=\{x\in\mbR^k:\; f_i(x)\le0\;\forall i\in\cal J\}$ for some finite set $\cal J$ of indices  
and continuously differentiable convex functions $f_i$.
\end{definition}
\end{comment}

\qrevision{
\begin{definition}
Let $f$ be a convex function on $\mbR^n$. A restricted function $\tilde{f}$ of $f$
induced by an index set ${\cal J}\subseteq\{1,\dots,n\}$ and a vector $v\in\mbR^{|\cal J|}$
is a convex function defined on $\mbR^{n-|\cal J|}$ as follows: 
{$\tilde{f}(y)=f(x_{\{1,\dots,n\}\setminus{\cal J}}=y, x_{\cal J}=v)$},
i.e., evaluated by fixing all variables indexed by $\cal J$ to be the constant vector $v$. 
\end{definition}

\begin{definition}
Let $\mF(\mbR^n)$ be a collection of convex functions such that 
for each $f\in\mF(\mbR^n)$, $\dom f=\mbR^k$ for some $1\le{k}\le{n}$. 
$\mF(\mbR^n)$ is called a \textit{nested} collection if for any $f\in\mF(\mbR^n)$,
every restricted function of $f$ is also a member in $\mF(\mbR^n)$. 
\end{definition}
}

\begin{assumption}\label{ass:subgrad-decomp}
For any $(x_0,y_0)\in\mathbb{R}^{l_1+l_2+l_3}$, the function $g_i\;\forall i\in\mtc{I}\cup\{0\}$ 
satisfies $\partial g_i(x_0,y_0)=\partial_x g_i(x_0,y_0)\times \partial_y g_i(x_0,y_0)$, where $\partial g_i(x_0,y_0)$
is the set of sub-gradients of $g_i$ at the point $(x_0,y_0)$, $\partial_x g_i(x_0,y_0)$ is the set of sub-gradients
of the function $g_i(x,y_0)$ at $x=x_0$, and $\partial_y g_i(x_0,y_0)$ is defined similarly.
\end{assumption}

\begin{comment}
\begin{assumption}\label{ass:complete-recourse}
\qrevision{The set $x\in\mX\cap\{0,1\}^{l_1}$ and $\mY\cap(\mbZ^{l_2}\times\mbR^{l_3})$ are non-empty.}
For any fixed $x\in\mX\cap\{0,1\}^{l_1}$,  \eqref{opt:micp-xy} is feasible in $y$.
\end{assumption}
\end{comment}

\begin{assumption}\label{ass:non-empty relative interior}
\qrevision{For any $\hat{x}\in\mX\cap\{0,1\}^{l_1}$ and any $\hat{z}\in\mathbb{Z}^{l_2}$, 
either $\mtc{S}(\hat{x},\hat{z})$ is empty or it has non-empty relative interior,
where $\mtc{S}(\hat{x},\hat{z})=\{y\in\mbR^{l_2+l_3}:\;T\hat{x} + Wy = q,\;g_i(\hat{x},y)\le 0 \; \forall i\in\mtc{I},\;y\in\mtc{Y},\; y_j=\hat{z}_j\;\forall j\in[l_2]\}$.}
\end{assumption}

\begin{assumption}\label{ass:no-error}
\qrevision{(i) Every constraint function from every optimization problem formulated in this paper
is a member of the nested collection {$\mF(\mbR^n)$}. 
(ii) We have an oracle that can solve to optimality or detect infeasibility 
of a convex optimization problem 
generated by any finite subset of member functions from {$\mF(\mbR^n)$}. 
(iii) We have an oracle that can numerically generate a sub-gradient $\partial{f}(x)$ 
for any member function {$f\in \mF(\mbR^n)$} at any point $x$. Furthermore,
for any closed convex set $S\subseteq\mbR^k$ generated by a finite subset of member functions
in {$\mF(\mbR^n)$}, this oracle can numerically generate the normal cone ${\cal N}_S(x)$ at any point $x\in\mbR^k$.
}
\end{assumption}
{Note that the above assumptions allow the functions $g_i$ ($i\in{\cal I}\cup\{0\}$) 
or the members of $\mF(\mbR^n)$ to be non-differentiable. This
ensures that the methods developed in this paper are sufficiently general to apply to a broad class of problems, 
provided that we have oracles specified above.}

\subsection{\qrevision{Justification of assumptions}}
Assumption~\ref{ass:subgrad-decomp} ensures that a valid inequality derived in the $y$-space for
\eqref{opt:micp-xy} can be extended to the $(x,y)$-space. 
Use of this assumption will be discussed in Remark~\ref{rmk:param-representation}.
Assumption~\ref{ass:non-empty relative interior} ensures that
the normal cone of a point in $\text{bd}(\mtc{S}(\hat{x},\hat{z}))$ 
is decomposable based on the fact that 
$\cN_{C_1\cap{C_2}}=\cN_{C_1}+\cN_{C_2}$ if and only
if $\ri(C_1\cap{C_2})$ is non-empty. The decomposition of a normal cone 
enables us to derive equivalent optimality conditions.

In all of the above assumptions, we have not imposed any differentiability on 
the candidate functions. In general, they can be convex but non-differentiable. 
For example, the candidate functions can contain non-differentiable operators such as max($\cdot$)
and $|\cdot|$, etc.

\qrevision{Assumption~\ref{ass:no-error} assumes that every convex program considered can be solved to optimality or detected as infeasible. 
We are interested in how the advanced mixed-integer problems studied in this paper 
can be solved to exact optimality using the pure cutting-plane approach
by reducing them into basic mixed-integer linear programs and convex programs.
It follows the literature of understanding how cutting-plane methods can solve a {mixed-integer linear program (MILP)} to exact optimality under the assumption that the {linear program (LP)} can be efficiently solved to exact optimality. However, an analysis of the algorithms for mixed-integer convex program (MICP), joint mixed-integer convex program (JMICP) and TSS-MICP without Assumption~\ref{ass:no-error} is provided in Appendix~\ref{sec:micp-epsilon} under an $\epsilon-$optimality assumption of the convex programming oracle.}

\subsection{Literature review}
\qrevision{We begin by comparing our work with existing literature in Table~\ref{tab:literature-comparison}.}

\begin{table}
	\scriptsize
	\centering
	\rotatebox{90}{
	\begin{minipage}{\textheight}
	\caption{\qrevision{Comparison of related works and problem settings in this work.}}
	\begin{tabular}{ m{2cm} | c | c | c | c | c | c | c  }
	\hline\hline
	\textbf{Literature} & \makecell{\textbf{Problem} \\ \textbf{Family}} & \makecell{\textbf{Pure} \\ \textbf{Cutting} \\  \textbf{Plane}} & \makecell{\textbf{Finite} \\ \textbf{Convergence}} & \makecell{\textbf{Exact} \\ \textbf{Solution}} & \makecell{\textbf{Error} \\ \textbf{Analysis}} & \textbf{Implementation} & \makecell{\textbf{Distributional} \\ \textbf{Robustness} \\ \textbf{Consideration}} \\
	\hline
	Balas (1993) \cite{balas1993-lift-proj-cut-plane-0-1-LP} & Binary LP & \ding{51} & \ding{51} & \ding{51} & \ding{55} & \ding{55} & {\ding{55}}\\
	\hline
	Owen \& Mehrotra (2001) \cite{owen2001disjunctive} & MILP & \ding{51} & \ding{51} &  \ding{55} &  \ding{55} &  \ding{55} & \ding{55} \\
	\hline
	J{\"o}rg (2007) \cite{jorg2007-k-disj-cut} & MILP & \ding{51} & \ding{51} &  \ding{55} &  \ding{55}  & \ding{55} & \ding{55} \\
	\hline
	Chen et. al. (2011) \cite{ChenKucukyavuzSen2011} & MILP & \ding{55} & \ding{51} & \ding{51} &  \ding{55} & \ding{55} & \ding{55} \\
	\hline
	Lubin et. al. (2018) \cite{LubinYamangilBentPablo2018} & MI-Convex-P & \ding{51} & \ding{55} & \ding{55} &  \ding{55} &  \ding{51} & \ding{55} \\
	\hline
	Luo \& Mehrotra (2019) \cite{luo2019_DR-TSS-MICP} & \makecell{TSS-MI-Conic-P, \\ DR-TSS-MI-Conic-P} & \ding{55} & \ding{51} & \ding{51} &  \ding{55} &  \ding{51} & \ding{51} \\
	\hline
	This paper & \makecell{MI-Convex-P, \\ TSS-MI-Convex-P, \\ DR-TSS-MI-Convex-P} & \ding{51} & \ding{51} &  \ding{51} &  \ding{51} &  \ding{51} & \ding{51} \\
	\hline
    \label{tab:literature-comparison}
	\end{tabular}
	\end{minipage}}
\end{table}
The development of algorithms for solving mixed-integer convex programs (MICPs) benefits from solvers for MILPs. 
We provide a brief review of work for solving MILPs with cutting planes, the outer approximation approach for solving MICPs based on cutting-plane methods, 
and the generalization of these methods for solving two-stage stochastic {mixed-integer}
programs.  

Branch-and-cut algorithms \cite{padberg1991-branch-and-cut,mitchell2002-branch-and-cut} 
are well developed for solving an MILP.
The cutting planes are generated at the root node or some other node of the branch-and-bound
tree to strengthen the linear relaxation, or cut the current solution having fractional components. Many families of valid inequalities have been developed since the 1950s to serve as cutting planes in the 
branch-and-cut algorithm. In particular, we have general purpose cuts  \cite{cornuejols2008-valid-ineq-milp}
such as Gomory cuts \cite{balas1996-gomory-cuts}, disjunctive cuts \cite{balas1985}, mixed-integer rounding cuts, and polyhedral structure-based cuts such as flow cover inequalities \cite{padberg1985-valid-ineq} and flow-path inequalities \cite{VanRoy1985-valid-ineq}.
Generation of these cuts plays an important role in a branch-and-cut algorithm.
A natural question is whether a MILP can be
solved to optimality by adding cutting planes only. For a general mixed 0-1
linear program, an affirmative answer is given in
\cite{balas1993-lift-proj-cut-plane-0-1-LP}, in which a systematic way of adding
disjunctive cuts to the linear relaxation problem is given. The disjunctive cuts are generated
using the lift-and-project method investigated in \cite{balas1985,balas1998}, 
which amounts to solving a linear program to obtain the coefficients of a disjunctive cut.
Owen and Mehrotra \cite{owen2001disjunctive} developed a 
cutting-plane algorithm for solving a general mixed-integer linear program. To guarantee the algorithm's convergence to an optimal solution, cuts are added at all $\gamma$-optimal 
(a notion defined in \cite{owen2001disjunctive})
vertices of the LP-relaxation master problem at every iteration. As an alternative to exploring the $\gamma$-optimal vertices, 
Chen et. al.~\cite{ChenKucukyavuzSen2011}
proposed a convergent cutting-plane algorithm for solving a general mixed-integer linear
program by keeping the union of certain disjunctive polytopes.  
J{\"o}rg \cite{jorg2007-k-disj-cut} provided an alternative convergent cutting-plane algorithm
that can solve a general MILP to optimality by introducing the notion of $k$-disjunctive cuts,
which is a generalization of the disjunctive cuts given in \cite{balas1985,balas1998}.  

Outer approximation (OA) methods for solving a {mixed-integer} convex program (MICP) also have a long history.  The idea can be traced back to \cite{DuranGrossman1986}. 
In an outer approximation method, the convex set is outer approximated 
with a polyhedron which relaxes the MICP with a MILP in every master iteration.
The polyhedron approximation is progressively refined by adding more valid 
inequalities that are tangent to the convex set.  
The theoretical question on whether a general MICP can be solved to optimality with cutting
planes is not well addressed, particularly when the convex functions are not differentiable. 
Towards addressing this question, Lubin et. al.~\cite{LubinYamangilBentPablo2018} use 
a polishing step after solving the master MILP, which solves a continuous
convex optimization problem by fixing the integer variables to be the values of the integer part in the current 
master solution. This polishing step is similar to the solution polishing done within the algorithm described in \cite{owen2001disjunctive}. The solution from the polishing-step problem is compared with the master problem solution 
to check whether an optimal solution is identified.
However, the proof given in \cite{LubinYamangilBentPablo2018} is incomplete and does not apply to the case where the convex functions are not differentiable. The convex functions considered in \cite{LubinYamangilBentPablo2018} are assumed
to be differentiable, in which case, the outer approximation cutting plane is unique at a specific boundary
point, and it can be obtained by taking the gradient of a constraint function. 
For a MICP involving general convex functions, this approach requires generalization.
In particular, if the convex set is a convex cone, the number of tangent planes passing
through the origin can be infinite. 

{Cutting-plane} methods have also been used for solving two-stage stochastic mixed-integer 
linear programs (TSS-MILP) with mixed-integer second-stage variables.
One approach is to generate parametric Gomory cuts that sequentially convexify the feasible set 
\cite{sen2014_decomp-alg-gomory-cuts-ts-smip,kucukyavuz2014_finite-convergent-decomp-alg-ts-smip}. 
Gade et al. \cite{sen2014_decomp-alg-gomory-cuts-ts-smip} have shown the finite-convergence of this algorithm 
for solving TSS-MILPs with pure-binary first-stage variables and pure-integer second-stage variables based 
on generating Gomory cuts that are parameterized by the first-stage solution. 
This approach is generalized by Zhang and K{\"u}{\c c}{\"u}kyavuz~\cite{kucukyavuz2014_finite-convergent-decomp-alg-ts-smip}
for solving TSS-MILP with pure-integer variables in both stages.  
Recent work has also provided insights into developing tighter 	
formulations by identifying globally valid parametric inequalities 
(see \cite{bansal2018_tight-ts-smip}, and references therein).  
For two-stage stochastic mixed-integer conic optimization, 
Bansal and Zhang \cite{bansal2020_conic-mip-cuts} have developed nonlinear sparse cuts 
for tightening the second-stage formulation of a class of two-stage stochastic $p$-order conic 
mixed-integer programs by extending the results of \cite{atamturk2008_conic-mip-cuts} on 
convexifying a simple polyhedral conic mixed-integer set.

Sen and Sherali \cite{sen2006_decomp-BB-TSS-MIP} developed a decomposition framework 
for solving a deterministic pure-binary linear program with two sets of binary 
decision variables coupled by some linking constraints. The algorithm iteratively
solves a master problem and a sub-problem, each involving different sets of variables.
The Benders' cut added to the master problem is a disjunctive valid inequality 
generated via taking union of all leaf nodes in the branch-and-bound tree 
constructed for solving the sub-problem. This framework is
applied to solve two-stage stochastic mixed-integer programs with pure binary
first-stage variables, as shown in \cite{sen2006_decomp-BB-TSS-MIP}. 
\cite{luo2019_DR-TSS-MICP} showed that the decomposition framework with a branch-and-cut approach can be further developed for the two-stage stochastic mixed-integer conic programming using the branch-and-union approach as well as an approach where the scenario problems are solved using cutting planes. 

\qrevision{This work can benefit distributionally robust (nonlinear) convex optimization problems
where candidate distributions share a common finite support. Ambiguity sets used in these problems
can be defined using the Wasserstein metric 
\cite{esfahani2018-dro-wass,gao2022-dro-wass-reg,luo2019-dro-wass-reg-model,gao2022-dro-wass}, 
the $\phi$-divergence \cite{namkoong2016-sgd-dro-phi-diverg,lam2019-stat-dro-phi-diverg,duchi2021-stat-ro-emp-likelihood}, 
and the total variation distance \cite{rahimian2019-eff-scen-dro-tot-var,dixit2022-dr-pred-ctrl-tot-var-dist}.
This family of problems has broad applications in transportation science and logistics 
\cite{sun2022-dro-fair-transit-res-alloc,shehadeh2022-dro-stoch-mob-fac}, 
supply chain management \cite{fu2018-dro-decent-supp-chain,xin2022-dr-invent-ctrl}, 
energy system \cite{zhang2022-building-load-ctrl-dro,wang2022-dro-unit-commit-flex-gen-res}, 
healthcare system \cite{wang2021-chance-constr-oper-room-sched,chow2022-dro-surgery-alloc}
and financial decision making \cite{blanchet2021-dr-mean-var-portfolio-select-wass,costa2023-dr-portf-construction}, etc.
We note that some algorithms and convergence results developed in this work
can be applied to solve scenario sub-problems of certain distributionally robust optimization problems
in which the underlying deterministic problem is a mixed-integer convex program. 
For example, this work targets distributionally robust mixed-integer convex optimization problems, 
such as those that appeared in
\cite{ketkov2024-dro-micp-wass-incomplete-data,luo2019_DR-TSS-MICP,xie2018-dro-simple-integer-recourse,delage2022-value-rand-sol-micp-dro}. 
 }

\subsection{Contributions of this paper}
This paper makes the following contributions:
\begin{itemize}
	\item A cutting-plane algorithm is developed for solving a general 
		mixed-integer convex program (MICP) (Section~\ref{sec:alg-micp}). 
		It is proved that the algorithm can identify an optimal solution of MICP 
		in finitely many iterations with finitely many cutting planes  (Section~\ref{sec:convg-analy}). To the best of our knowledge, this is the first formal proof of finite convergence of a cutting-plane algorithm for solving a general MICP.
	\item A decomposition algorithm is developed for solving \eqref{opt:micp-xy}. A novel aspect of this algorithm is an oracle that develops a parametric cut that is added to the master problem in the decomposition algorithm (Section~\ref{sec:decomp-cut-plane}). This decomposition algorithm is further generalized to solve a family of distributionally robust two-stage stochastic {mixed-integer} convex programs (Section~\ref{sec:cut-plane-two-stage-convex-prog}).
    \item For practical implementation, a cutting-plane algorithm often needs to be coupled with a branch-and-bound technique. Therefore, we combined the cut generation and the branch-and-bound method to develop a linearize-branch-and-union algorithm that is more suitable for implementation (Section~\ref{sec:decomp-branch-union}).
	\item The main algorithms developed in this paper were implemented and tested to solve a family of two-stage stochastic and distributionally-robust mixed-integer convex programs with pure-binary first-stage decision variables and mixed-integer second-stage decision variables (Section~\ref{sec:num_exp}). Computational results indicate that, within a 12-hour time limit in a two-stage stochastic setting, our algorithm achieves an optimality gap of less than 5\%, whereas Gurobi exceeds 17\%.
    \item We provide an analysis of our methods for the case where the convex-optimization oracle can only solve convex programs to a desired accuracy (Section~\ref{sec:micp-epsilon}).
\end{itemize}

We demonstrate the organization of this paper in Table~\ref{tab:organization} and use the following notations throughout the next sections.

\textbf{Notations and definitions:} We use $\mtc{C}$ to represent a closed convex set. $\dist(x, \mtc{C})$ denotes the minimum distance between point $x$ and set $\mtc{C}$. $\dist(x, y)$ is the distance between any two point $x$ and $y$. 
\qrevision{The notations $\textrm{int}(\mtc{C})$, $\ri(\mtc{C})$ and $bd(\mtc{C})$
respectively represent the set of interior points, the relative interior points, and the boundary points of set $\mtc{C}$,
where by definition $bd(\mtc{C})=\mtc{C}\setminus\text{int}(\mtc{C})$.} For a non-empty convex set $\mtc{C}\subset\mbR^n$,
$x\in bd(\mtc{C})$ if and only if there is a hyperplane supporting $\mtc{C}$ at $x$. A normal cone at some point $x \in bd(\mtc{C})$ is denoted by $\mtc{N}_{\mtc{C}}(x)$. %(see definition in \cite[Section 2]{rockafellar1970-ConvexAnalysis}). 
$\|.\|$ denotes $\ell_2$-norm unless any other norm is specified with a subscript {.} We let $[K]$ be the index set $\{1, 2, \cdots, K\}$ and $|K|$ be its cardinality. $\bs{1}$ and {$\bs{0}$ denote vectors with all entries being 1 and 0, respectively}. The projection of point $x_0$ on a set $S$ is denoted by {$\mathrm{Proj}_{S}{x_0}$}.
%An inequality $\alpha^{\top}x+\beta\le0$ is called a supporting valid inequality for $\mtc{C}$ if it satisfies (i) $\alpha^{\top}x+\beta\le0$ for all $x\in \mtc{C}$, and (ii) $\alpha^{\top}\hat{x}+\beta=0$ for an $\hat{x}\in\text{bd}(\mtc{C})$, i.e., it is a supporting hyperplane of $\mtc{C}$ at $\hat{x}$.

\begin{table}
	\scriptsize
	\centering
	\caption{\qrevision{Organization of results and algorithms in this paper.}}
	\renewcommand{\arraystretch}{2}
	\begin{tabular}{ c| c | c | c | c | c }
	\hline\hline
	\textbf{Problem} & \textbf{Section} & \makecell{\textbf{Sub-} \\ \textbf{problem}} & \makecell{\textbf{Main} \\ \textbf{Algorithm}} & \makecell{\textbf{Algorithm} \\ \textbf{Calls}} & \makecell{\textbf{Pure} \\ \textbf{Cutting-} \\ \textbf{plane}} \\
	\hline
	{\makecell{Mixed-integer  \\ convex program \\ \eqref{opt:micp} }} & Section~\ref{sec:alg-micp} & N/A & Algorithm~\ref{alg:micp-oracle} & N/A & \ding{51}  \\
	\hline
	{\makecell{Parametric \\ MICP\\ \eqref{opt:y-micp2} } } & Section~\ref{sec:param-cut-plane} & N/A & Algorithm~\ref{alg:param-micp} & N/A & \ding{51} \\
	\hline
	{\makecell{Joint MICP\\ \eqref{opt:micp-xy} } }  & Section~\ref{sec:decomp-alg} & \eqref{opt:y-micp2} & Algorithm~\ref{alg:decomp-cutting-plane} & Algorithm~\ref{alg:param-micp} & \ding{51}  \\
	\hline
    {\makecell{Distributionally \\ robust two-stage \\  stochastic MICP \\  \eqref{opt:TSS-MICP} }}  & Section~\ref{sec:cut-plane-two-stage-convex-prog} & \makecell{\eqref{opt:micp} \& \\ \eqref{opt:y-micp2}} & Algorithm~\ref{alg:decomp-BC} & {Algorithms}~\ref{alg:micp-oracle},~\ref{alg:param-micp} & \ding{51} \\
	\hline
    \eqref{opt:TSS-MICP} & Section~\ref{sec:decomp-branch-union} & \makecell{\eqref{opt:micp} \& \\ \eqref{opt:y-micp2}} & Algorithm~\ref{alg:decomp-BU} & \makecell{Modified \\ Algorithm~\ref{alg:param-micp} \& \\ Branch-\&-Cut} & \ding{55} \\
	\hline
    \makecell{\eqref{opt:micp-xy} \& \\  \eqref{opt:TSS-MICP} \\ with error \\ analysis} & \makecell{Appendix \\~\ref{sec:micp-epsilon}} & \makecell{\eqref{opt:micp} \& \\ \eqref{opt:y-micp2}} & \makecell{{Algorithms}~\ref{alg:decomp-JMICP-e},\\  ~\ref{alg:decomp-BC-error}} & Algorithm~\ref{alg:micp-e} & \ding{51} \\
	\hline
	\end{tabular}
    \label{tab:organization}
\end{table}

\section{An Algorithm for Mixed-Integer Convex Programs Using Cutting Planes}
\label{sec:alg-micp}
Let us consider a mixed-integer convex program in the form:
\beq\label{opt:micp}
\ba
&\min\; c^{\top}x \\
&\textrm{ s.t. }\; Ax = b, \quad x\in {\cal C},\; x\in\mathbb{Z}^{l_1}\times\mathbb{R}^{l_2},
\ea
\tag{MICP}
\eeq
where $Ax= b$ are linear constraints \qrevision{and $\mC$ is a closed bounded convex set.
The finite-convergent cutting-plane algorithm for solving \eqref{opt:micp} to exact optimality, developed and analyzed in this section, is the foundation for handling other families of problems studied in this paper.}

\subsection{\qrevision{Algorithm development}}
\qrevision{In this section, we make the following assumption for \eqref{opt:micp} which is a counterpart 
of Assumption~\ref{ass:non-empty relative interior}:
\begin{assumption}\label{ass:micp-ri}
For any $\hat{z}\in\mathbb{Z}^{l_1}$, 
either $\mtc{S}(\hat{z})$ is empty or it has a non-empty relative interior,
where $\mtc{S}(\hat{z})=\{x:\;Ax=b,\;x\in{\cal C},\; x_i=z_i\;\forall i\in[l_1]\}$.
\end{assumption}
}

\begin{proposition}\label{prop:lin-approx-convex-prog}
Let \qrevision{$\cal B$} be a convex set in $\mathbb{R}^n$
generated by a finite set of member functions from {$\mF(\mbR^n)$}. 
Consider the following convex program:
\beq\label{opt:convex-opt}
\ba
&\min\;c^{\top}x \\
&\emph{ s.t. } {\bar{A}} x = {\bar{b}},\;
x \in \qrevision{\cal B}. 
\ea
\eeq
\qrevision{
Let $x^*$ be an optimal solution of \eqref{opt:convex-opt}.
Then a vector $\alpha\in\cN_{\cal B}(x^*)$ can be computed such that the following linear program is a relaxation of \eqref{opt:convex-opt}:
\beq\label{opt:conv-linear-opt}
\ba
&\min\;c^{\top}x \\
&\emph{ s.t. } {\bar{A}} x = {\bar{b}},\; \alpha^\top(x-x^*) \le 0.
\ea
\eeq
Furthermore, if the relative interior of the feasible set of \eqref{opt:convex-opt}
is non-empty, the optimal value of \eqref{opt:conv-linear-opt} is equal to $c^{\top}x^*$.
}
\end{proposition}
\begin{proof} 
See Appendix~\ref{sec:proofs}.
\end{proof}

The pseudo-code is given in Algorithm~\ref{alg:micp-oracle}. 
We now provide an overview of our algorithm.  
We will show that the algorithm terminates with an optimal solution after adding finitely many cuts. 
At iteration $n$, we consider the following mixed-integer linear program as a master problem:
\beq\label{opt:micp-master}
\ba
&\min_{x\in\mathbb{Z}^{l_1}\times\mathbb{R}^{l_2}}\; c^{\top}x \\
&\textrm{ s.t. } Ax = b, \\
&\qquad (x^{(k)}-z^{(k)})^{\top}x\le (x^{(k)}-z^{(k)})^{\top}z^{(k)} \quad \forall k\in [n-1], \\
&\qquad C^{(k)} x \le d^{(k)} \qquad \qquad \qquad \qquad \qquad \quad \, {\forall k\in\mI^{(n-1)},}  
\ea
\tag{MS-$n$}
\eeq 
where $(x^{(k)}-z^{(k)})^{\top}x\le (x^{(k)}-z^{(k)})^{\top}z^{(k)}$ is the cutting plane generated at iteration $k$ to separate the master
problem solution $x^{(k)}$ from $\mtc{C}$ at iterations $k < n$, 
and $C^{(k)}x\le d^{(k)}$ generated at iteration $k\in[n-1]$ 
are outer-approximation cuts for approximating $\mtc{C}$. 
The way of generating these cuts will be described later. 
Since these cuts are not necessarily available at every iteration, the algorithm uses the index set
$\mI^{(n-1)}$ to record the iteration indices in $[n-1]$ in which these cuts are generated. 
Generation of these cuts will become clearer as we explain the algorithm.

\qrevision{Note that \eqref{opt:micp-master} is a MILP, and hence it can be solved to optimality
by adding cutting planes without branching, using the oracle from \cite{ChenKucukyavuzSen2011}.
Since we are interested in whether a pure cutting-plane method can be developed 
to solve optimization problems concerned in this work, all the oracles used by our algorithms
should also be based on the pure cutting-plane approach.}

If  \eqref{opt:micp-master} is infeasible, 
we stop and conclude that \eqref{opt:micp} is infeasible. 
Otherwise, let $x^{(n)}$ be a solution of the master problem \eqref{opt:micp-master}. 
If $x^{(n)} \in {\cal C}$, then $x^{(n)}$ is optimal to \eqref{opt:micp} and the algorithm stops. 
Otherwise, consider the projection problem:
\beq
\min_{z\in {\cal C}}\; \big\|z-x^{(n)}\big\|^2_2  \label{eqn:IterateProjection}
\eeq
and let $z^{(n)}$ be its optimal solution. Then the algorithm generates the following separation cut:
\beq \label{eqn:OuterApproximationCut}
		(x^{(n)}-z^{(n)})^{\top}x\le (x^{(n)}-z^{(n)})^{\top}z^{(n)}.
\eeq 
{The above inequality is valid for $\cal C$, and thus also for \eqref{opt:micp}, 
since it corresponds to a tangent plane to $\cal C$ passing through the boundary point $z^{(n)}$. 
Moreover, because $x^{(n)} \notin \cal C$, the cut is either violated by $x^{(n)}$ or active at $x^{(n)}$.}
The algorithm first adds \eqref{eqn:OuterApproximationCut} to \eqref{opt:micp-master}.
Then it solves the following continuous convex program:
 \beq\label{opt:convex-FixedInteger}
\ba
&\min_{ x\in {\cal C}}\; c^{\top}x \\
&\text{ s.t. } Ax = b,\\
&\qquad x_j =x_j^{(n)} \quad j\in [l_1].
\ea
\eeq 
{where integer variables in $x$  are fixed.} The problem in \eqref{opt:convex-FixedInteger} can be viewed as a solution polishing step. 
It also plays a key role in our finite convergence analysis. 
\begin{comment}
There are two cases: (a) The problem \eqref{opt:convex-FixedInteger} is infeasible;
(b) The problem \eqref{opt:convex-FixedInteger} is feasible and the relative interior
of the feasible set of it is non-empty. 
(c) The problem \eqref{opt:convex-FixedInteger} is feasible, but the relative interior of the feasible set of it is empty.
The optimal solution $\bar{x}^{(n)}$ is on the boundary of $\cal C$, 
i.e., $\bar{x}^{(n)} \in \textrm{bd}(\cal C)$. 
The algorithm will proceed accordingly in different cases. 
\end{comment}
If \eqref{opt:convex-FixedInteger} is infeasible, 
the algorithm sets $\mI^{(n)}\gets\mI^{(n-1)}$, updates $n\gets n+1$, 
and proceeds to the next iteration. 
Otherwise, the solver can find an optimal solution $\tilde{x}^{(n)}$ of \eqref{opt:convex-FixedInteger} 
that is on the boundary of $\cal C$ due to the linearity of the objective function.
\qrevision{The algorithm generates outer-approximation inequalities $C^{(n)}x\le{d}^{(n)}$ 
induced by $\tilde{x}^{(n)}$ which consists of the inequality
$\alpha^\top(x-\bar{x}^{(n)})\le0$ with $\alpha\in\cN_{\cal C}(\bar{x}^{(n)})$ derived 
according to Proposition~\ref{prop:lin-approx-convex-prog} 
({The equality constraints $Ax=b,\; x_j=x^{(n)}_j\; j\in[l_1]$ and set $\cal C$ 
are counterparts of the constraints $\overline{A}x=\overline{b}$ and set $\cal B$ in Proposition~\ref{prop:lin-approx-convex-prog}, respectively.})
and some other outer-approximation inequalities derived based on
active smooth constraints.}
Since Assumption~\ref{ass:micp-ri} holds, the results of Proposition~\ref{prop:lin-approx-convex-prog} can be applied to give the following property:
\beq\label{eqn:Cx<d}
\ba
&\{\min c^\top x\;|\; Ax = b,\; x_j =x_j^{(n)} \;\,\quad  \forall j\in [l_1],\; x\in{\cal C} \}	\\
&= \{\min c^\top x\;| \; Ax = b,\; x_j =x_j^{(n)} \; \forall j\in [l_1],\; C^{(n)} x \leq d^{(n)} \}.
\ea
\eeq
The algorithm repeats the above procedures.

 Some key notations used 
in Algorithm~\ref{alg:micp-oracle} are given below:
\begin{itemize}
	\item $x^{(k)}$: an optimal solution of the master problem at iteration $k$;
	\item $z^{(k)}$: the projection of $x^{(k)}$ on $\cal C$, computed at iteration $k$;
	\item $(x^{(k)}-z^{(k)})^{\top}x\le (x^{(k)}-z^{(k)})^{\top}z^{(k)}$: the separation cut generated
		at iteration $k$;
	\item $\tilde{x}^{(k)}$: an optimal solution of the continuous convex program \newline
		$\Set*{\min\; c^\top x}{Ax = b,\; x_j=x^{(k)}_j\;\forall j\in[l_1],\;x\in\cal C}$
		obtained at iteration $k$;
	\item $C^{(k)} x \leq d^{(k)}$: the outer-approximation inequalities, generated at iteration $k$
		when \eqref{opt:convex-FixedInteger} is feasible;
	\item $\mI^{(n)}$: this is the subset $\Set*{k\in[n]}{\textrm{problem \eqref{opt:convex-FixedInteger} is feasible at iteration }k}$ of 
		iteration indices for which the outer-approximation inequalities are generated.
\end{itemize}

\begin{algorithm}
	\caption{An algorithm for solving a {mixed-integer} convex program \eqref{opt:micp}.}
	\label{alg:micp-oracle}
	\begin{algorithmic}[1]
	\State{Set $n\gets 1$ and $\mI^{(0)}\gets\emptyset$.}
	\Loop
		\State{(Start iteration $n$.)}
		\State{\textbf{Step 1:} \qrevision{Solve the master problem \eqref{opt:micp-master} using
		the cutting-plane oracle  from \cite{ChenKucukyavuzSen2011}} to get an optimal solution $x^{(n)}$ and optimal objective denoted as $L^{(n)}$}.
		\State{If the master problem is infeasible, then stop and claim that \eqref{opt:micp} is infeasible.}\label{lin:micp-infeasb}
		\State{Let $x^{(n)}=(x_Z^{(n)},x_R^{(n)})$, where $x_Z^{(n)} \in \mathbb{Z}^{l_1}$ and $x_R^{(n)} \in \mathbb{R}^{l_2}$}.
		\If{$x^{(n)}\in\cal C$} \label{lin:x_in_C}
			\State{Stop and return $x^{(n)}$ as an optimal solution of \eqref{opt:micp};}
		\Else
			\State{Continue.}
		\EndIf
		\State{\textbf{Step 2:} Solve \eqref{eqn:IterateProjection} to get the projection point $z^{(n)}$ of $x^{(n)}$ on $\cal C$.}
		\State{Add the separation cut $(x^{(n)}-z^{(n)})^{\top}x\le (x^{(n)}-z^{(n)})^{\top}z^{(n)}$ to \eqref{opt:micp-master}.}\label{lin:sep-ineq}
		\State{\textbf{Step 3:} Solve the continuous convex program:}
		\beq\label{opt:conv-fix-x1}
		\ba
		&\min\; c^{\top}x \quad \textrm{ s.t. } \srevision{Ax = b}, \;x_Z=x_Z^{(n)},  \quad x\in {\cal C}.
		\ea
		\eeq
		\If{\eqref{opt:conv-fix-x1} is feasible} \label{cond:feasb}
			\State{\qrevision{Obtain an optimal solution $\tilde{x}^{(n)}\in\text{bd}(\cal C)$ of \eqref{opt:conv-fix-x1}.}
			{Such a boundary-point optimal 
            solution exists, since the objective is linear and other constraints are equalities.}}
			\State{\qrevision{This is achievable as the objective function is linear.}}
			\State{Let $U^{(n)}=c^\top\tilde{x}^{(n)}$.}
			\If{$L^{(n)}=U^{(n)}$} \label{lin:L=U_conv}
				\State{Stop and return $\tilde{x}^{(n)}$ as an optimal solution of \eqref{opt:micp}.}
			\EndIf
			\State{\qrevision{Generate outer-approximation inequalities $C^{(n)}x \le d^{(n)}$ as given in \eqref{eqn:Cx<d} }}  
			\State{\qrevision{and add them to \eqref{opt:micp-master}.}}
			\State{Set $\mI^{(n)}\gets\mI^{(n-1)}\cup\{n\}$.}
		\Else
			\State{Set $\mI^{(n)}\gets\mI^{(n-1)}$. }
		\EndIf
		\State{$n\gets n+1$.}
	\EndLoop
	\end{algorithmic}
\end{algorithm}

\subsection{\qrevision{Convergence analysis for the {cutting-plane} algorithm}}
\label{sec:convg-analy}
We first provide a few technical results that are used for proving
the main result (Theorem~\ref{thm:finite-oracle-mixed-int-convex}) of this section.
\begin{observation}
The inequalities $C^{(n)}x\le{d}^{(n)}$ added to the master problem in each iteration
when they are generated are valid for \eqref{opt:micp}. Hence, the master problem
\eqref{opt:micp-master} is a relaxation of \eqref{opt:micp} at every iteration.
\end{observation}

\begin{proposition}\label{prop:int-seq-convg}
 If $\{x^{(n)}\}^{\infty}_{n=1}$ is a convergent sequence 
in $\mathbb{Z}^{l_1}\times\mathbb{R}^{l_2}$, then the limit point of
this sequence is also in $\mathbb{Z}^{l_1}\times\mathbb{R}^{l_2}$.
There exists an index $N$ and a vector $v\in\mathbb{Z}^{l_1}$
such that $x^{(n)}_i=v_i$ for all $i\in[l_1]$ and all $n\ge N$. 
\end{proposition}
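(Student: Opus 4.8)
The plan is to exploit the fact that the integer lattice $\mathbb{Z}$ is uniformly discrete in $\mathbb{R}$: any two distinct integers differ in absolute value by at least $1$. First I would recall that every convergent sequence in a metric space is Cauchy. Writing $x^{(n)}=(x_Z^{(n)},x_R^{(n)})$ with $x_Z^{(n)}\in\mathbb{Z}^{l_1}$ and $x_R^{(n)}\in\mathbb{R}^{l_2}$, the Cauchy property with respect to the $\ell_\infty$-norm provides an index $N$ such that $\norm{x^{(m)}-x^{(n)}}_\infty<1$ for all $m,n\ge N$; in particular $|x^{(m)}_i-x^{(n)}_i|<1$ for every coordinate $i\in[l_1]$.

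Next, since $x^{(m)}_i$ and $x^{(n)}_i$ are integers whose absolute difference is strictly less than $1$, they must be equal. Hence, for each $i\in[l_1]$, the value $x^{(n)}_i$ is constant for all $n\ge N$; call this common value $v_i\in\mathbb{Z}$ and set $v=(v_1,\dots,v_{l_1})\in\mathbb{Z}^{l_1}$. This is precisely the second assertion, that $x^{(n)}_i=v_i$ for all $i\in[l_1]$ and all $n\ge N$.

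Finally, for the first assertion, let $x^*=\lim_{n\to\infty}x^{(n)}$ and use continuity of the coordinate projections. The first $l_1$ coordinates of $x^*$ equal $\lim_{n\to\infty}x_Z^{(n)}=v\in\mathbb{Z}^{l_1}$ (an eventually constant sequence), while the remaining $l_2$ coordinates equal $\lim_{n\to\infty}x_R^{(n)}$, which is some element of $\mathbb{R}^{l_2}$. Therefore $x^*\in\mathbb{Z}^{l_1}\times\mathbb{R}^{l_2}$.

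There is essentially no serious obstacle here; the only point requiring care is the strict inequality in the Cauchy condition — one must use a threshold $<1$ (rather than $\le 1$) so that nearby integer coordinates are forced to coincide rather than possibly differing by exactly one. Equivalently, one may argue directly coordinatewise: each integer-valued sequence $\{x^{(n)}_i\}_{n\ge 1}$ with $i\in[l_1]$ is convergent, and a convergent sequence of integers is eventually constant.
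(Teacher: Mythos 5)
Your proof is correct and rests on the same elementary fact the paper uses, namely that two integers whose distance is strictly less than $1$ must coincide; the only difference is that you compare terms of the sequence to each other via the Cauchy property and deduce eventual constancy of the integer block first, whereas the paper first shows by contradiction that the limit's first $l_1$ coordinates are integral and then compares each $x^{(n)}$ to the limit. Both routes are valid and essentially interchangeable, so no further comment is needed.
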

\begin{proof}
See Appendix~\ref{sec:proofs}.
\end{proof}

\begin{comment}
\begin{definition}
Let ${\cal C}$ be a closed convex set. An inequality $\alpha^{\top}x+\beta\le0$ is called a supporting valid inequality for ${\cal C}$ if it satisfies
\begin{enumerate}
	\item $\alpha^{\top}x+\beta\le0$ for all $x\in {\cal C}$,
	\item and $\alpha^{\top}x+\beta=0$ for an $\hat{x} \in {\cal C}$, i.e., it is a supporting hyperplane of ${\cal C}$ at $\hat{x}$.
\end{enumerate}
\end{definition}
\end{comment}

\qrevision{
\begin{proposition}\label{prop:separation}
Let $S\subset\mbR^n$ be a nonempty closed convex set, $x_0\notin{S}$, and {$z_0=\mathrm{Proj}_S({x_0})$}.
Then, for any $\delta<\norm{x_0-z_0}/2$, the supporting hyperplane $\{x\in\mbR^n: (x_0-z_0)^\top(x-z_0)=0\}$
separates $S$ and $B_\delta(x_0)$, where $B_\delta(x_0)$ is an open ball of radius $\delta$ centered at $x_0$.
\end{proposition}
}

\begin{lemma}\label{lem:limit-x-in-K}
Suppose Algorithm~\ref{alg:micp-oracle} does not terminate finitely,
and let $\{x^{(n)}\}^{\infty}_{n=0}$ be the infinite sequence
generated by the algorithm. Then every convergent subsequence
of $\{x^{(n)}\}^{\infty}_{n=0}$ has its limit point in $\emph{bd}({\cal C})$.
\end{lemma}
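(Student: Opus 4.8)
The plan is to fix an arbitrary convergent subsequence $\{x^{(n_k)}\}_{k\ge 1}$ of $\{x^{(n)}\}$, call its limit $x^*$, and establish $x^*\in\textrm{bd}(\mathcal{C})=\overline{\mathcal{C}}\setminus\textrm{int}(\mathcal{C})$ by proving separately that (i)~$x^*\notin\textrm{int}(\mathcal{C})$ and (ii)~$x^*\in\overline{\mathcal{C}}$. Part~(i) is short: because the algorithm never terminates, the test in line~\ref{lin:x_in_C} fails at every iteration, so $x^{(n)}\notin\mathcal{C}$ for all $n$; if $x^*$ were in $\textrm{int}(\mathcal{C})$ then some open ball around $x^*$ would lie inside $\mathcal{C}$, and the convergence $x^{(n_k)}\to x^*$ would put $x^{(n_k)}$ into that ball (hence into $\mathcal{C}$) for all large $k$, a contradiction.

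Part~(ii) is the heart of the argument and exploits the separation cuts. Write $a^{(k)}:=x^{(k)}-z^{(k)}$, where $z^{(k)}$ is the Euclidean projection of $x^{(k)}$ onto $\mathcal{C}$ computed in Step~2, so that $\|a^{(k)}\|_2=\dist(x^{(k)},\mathcal{C})$ and the cut added in line~\ref{lin:sep-ineq} reads $a^{(k)\top}x\le a^{(k)\top}z^{(k)}$. Since cuts are only accumulated in \eqref{opt:micp-master} and never dropped, the cut generated at iteration $n_k$ is among the constraints of the master problem solved at any later iteration, in particular at iteration $n_{k+1}>n_k$; hence $x^{(n_{k+1})}$ satisfies $a^{(n_k)\top}x^{(n_{k+1})}\le a^{(n_k)\top}z^{(n_k)}$. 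Decomposing $x^{(n_{k+1})}-z^{(n_k)}=(x^{(n_{k+1})}-x^{(n_k)})+a^{(n_k)}$ and using $a^{(n_k)\top}a^{(n_k)}=\|a^{(n_k)}\|_2^2$ together with Cauchy--Schwarz gives
\beq
\|a^{(n_k)}\|_2^2\;\le\;a^{(n_k)\top}\big(x^{(n_k)}-x^{(n_{k+1})}\big)\;\le\;\|a^{(n_k)}\|_2\,\|x^{(n_k)}-x^{(n_{k+1})}\|_2,
\eeq
so $\dist(x^{(n_k)},\mathcal{C})=\|a^{(n_k)}\|_2\le\|x^{(n_k)}-x^{(n_{k+1})}\|_2$. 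A convergent sequence is Cauchy, so the right-hand side tends to $0$ as $k\to\infty$, whence $\dist(x^{(n_k)},\mathcal{C})\to 0$; combining with $x^{(n_k)}\to x^*$ via the triangle inequality yields $\dist(x^*,\mathcal{C})=0$, i.e.\ $x^*\in\overline{\mathcal{C}}$ (equal to $\mathcal{C}$ when $\mathcal{C}$ is closed). Parts~(i) and~(ii) together give $x^*\in\textrm{bd}(\mathcal{C})$.

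The only step I anticipate needing a little care is the bookkeeping claim that the iteration-$n_k$ separation cut is still active when $x^{(n_{k+1})}$ is computed --- i.e.\ reading off from Algorithm~\ref{alg:micp-oracle} that (MS-$n$) retains all separation cuts generated in iterations $1,\dots,n-1$ --- together with the standing regularity that $\mathcal{C}$ be closed, so that the projection in \eqref{eqn:IterateProjection} attains its minimum at a point $z^{(k)}\in\mathcal{C}$. Everything else reduces to a single Cauchy--Schwarz estimate and the triangle inequality.
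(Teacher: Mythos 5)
Your proof is correct and rests on the same mechanism as the paper's: the interior case is handled identically, and the boundary case in both arguments comes down to the fact that the separation cut generated at iteration $n_k$ is retained in (MS-$n_{k+1}$), so $x^{(n_{k+1})}$ must satisfy it, which together with the Cauchy property of the convergent subsequence forces $\dist(x^{(n_k)},\mathcal{C})\to 0$. Your direct Cauchy--Schwarz estimate $\dist(x^{(n_k)},\mathcal{C})\le\|x^{(n_k)}-x^{(n_{k+1})}\|_2$ is simply a streamlined, limit-based form of the paper's proof by contradiction, which instead fixes $\delta=\dist(\hat{x},\mathcal{C})>0$ and chases explicit constants to show a later iterate violates an earlier cut by at least $\tfrac{1}{4}\delta^2$.
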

\begin{proof}
We prove the result by contradiction.
Consider a convergent subsequence $\{x^{(n_i)}\}^{\infty}_{i=1}$.
Suppose 
\beq
\hat{x}=\lim_{i\to\infty}x^{(n_i)}.
\eeq
Suppose $\hat{x}\notin\textrm{bd}({\cal C})$.
There are two possibilities: 
(1) $\hat{x} \in\text{int}({\cal C})$; and (2) $\hat{x} \notin {\cal C}$.
In Case (1), there must exist a $x^{(n_i)}$ such that $x^{(n_i)}\in {\cal C}$. 
But, such a solution must be optimum because it is obtained from an outer approximation problem
and Algorithm~\ref{alg:micp-oracle} should have terminated (Condition in Line~\ref{lin:x_in_C} 
of Algorithm~\ref{alg:micp-oracle} is satisfied at iteration $n_i$).
In Case (2), we have $\dist(\hat{x},{\cal C})>0$.

Note that by definition $z^{(n)}$ (in \eqref{eqn:IterateProjection}, 
Line 6 of Algorithm~\ref{alg:micp-oracle}) is the (Euclidean) projection of $x^{(n)}$
onto the set ${\cal C}$, and  the inequality
\beq\label{eqn:ProjectionCut}
\ba
&(x^{(n)}-z^{(n)})^{\top}(x-z^{(n)})\le 0
\ea
\eeq
added in the algorithm is a supporting valid inequality for ${\cal C}$ 
at the support point $z^{(n)}$. 
\qrevision{Since $\{x^{(n_i)}\}$ is a convergent subsequence, there exists an
sufficiently large index $j$ and a $\delta>0$ such that $x^{(n_k)}\in{B}_\delta(x^{(n_j)})$
for any $k>j$. Furthermore, by Proposition~\ref{prop:separation},
the cutting plane $\{x: (x^{(n_j)}-z^{(n_j)})^{\top}(x-z^{(n_j)})=0\}$ separates
${\cal C}$ and $B_\delta(x^{(n_j)})$.
This shows that the point $x^{(n_k)}$ ($n_k>n_j$) violates the separation cut 
$(x^{(n_j)}-z^{(n_j)})^{\top}(x-z^{(n_j)})\le 0$ 
generated at a previous iteration $n_j$,
which leads to a contradiction since $x^{(n_j)}$
is an optimal solution of (MS-$n_j$) which includes the cut $(x^{(n_j)}-z^{(n_j)})^{\top}(x-z^{(n_j)})\le 0$
based on the algorithm. This concludes the proof. \qed}
\end{proof}
The next {lemma} shows that the limit point of the infinite subsequence 
considered in Lemma~\ref{lem:limit-x-in-K} can be identified after a finite number
of iterations by solving a convex program. 

\begin{lemma}\label{lem:convex-convg-subseq}
Suppose Algorithm~\ref{alg:micp-oracle} does not terminate finitely,
and let $\{x^{(n_i)}\}^{\infty}_{i=1}$ be a convergent subsequence
generated by this algorithm. Let $x^*$ be the limit point of this subsequence. Then, for a sufficiently large $n_i$, $x^*$ is an optimal solution
of the problem:
\beq\label{opt:convex-x1*}
\ba
&\min_{ x\in {\cal C}}\; c^{\top}x \\
		&\emph{ s.t. } \srevision{Ax = b}, \;x_j =x_j^{(n_i)},\quad j\in [l_1].
\ea
\eeq 
\end{lemma}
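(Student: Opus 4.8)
The plan is to chain Proposition~\ref{prop:int-seq-convg} and Lemma~\ref{lem:limit-x-in-K} with a squeeze between the nondecreasing sequence of master lower bounds $\{L^{(n)}\}$ and the optimal value of the polishing problem. First I would freeze the integer part of the subsequence: since $\{x^{(n_i)}\}^{\infty}_{i=1}$ is a convergent sequence in $\mathbb{Z}^{l_1}\times\mathbb{R}^{l_2}$, Proposition~\ref{prop:int-seq-convg} supplies a vector $v\in\mathbb{Z}^{l_1}$ and an index $I$ with $x^{(n_i)}_j=v_j=x^{*}_j$ for all $j\in[l_1]$ and all $i\ge I$. Hence for every $i\ge I$ the program \eqref{opt:convex-x1*} --- equivalently the convex program \eqref{opt:conv-fix-x1} solved at iteration $n_i$ --- is the single fixed convex program
\beq
P^{\star}:\quad \min\bigl\{\,c^{\top}x\ :\ Ax\le b,\ x_j=v_j\ \forall j\in[l_1],\ x\in\mathcal{C}\,\bigr\},
\eeq
whose optimal value I denote by $U^{\star}$. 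Because $\mathcal{C}$ and $\{x:Ax\le b\}$ are closed and $x^{(n_i)}\to x^{*}$, the point $x^{*}$ is feasible for $P^{\star}$; in particular $P^{\star}$ is feasible and $x^{*}$ is a feasible point of \eqref{opt:micp}. Since the algorithm does not terminate finitely, iteration $n_i$ is executed without stopping for every $i$, so for $i\ge I$ it proceeds past the feasibility test at Line~\ref{cond:feasb} and past the test $L^{(n)}=U^{(n)}$ at Line~\ref{lin:L=U_conv} to reach Line~\ref{lin:ray}; and by Lemma~\ref{lem:limit-x-in-K}, $x^{*}\in\textrm{bd}(\mathcal{C})$.

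Next I would sandwich $\opt$, the optimal value of \eqref{opt:micp}. Each master problem \eqref{opt:micp-master} is a relaxation of \eqref{opt:micp}, because every separation cut added at Line~\ref{lin:sep-ineq} and every supporting inequality $C^{(k)}x\le d^{(k)}$ is valid for $\mathcal{C}$, hence for the feasible set of \eqref{opt:micp}; thus $L^{(n)}\le\opt$ for all $n$, and since each iteration only appends constraints, $\{L^{(n)}\}$ is nondecreasing. As $x^{(n)}$ is optimal for \eqref{opt:micp-master} we have $c^{\top}x^{(n_i)}=L^{(n_i)}$, so continuity yields $L^{(n_i)}\to c^{\top}x^{*}$, and the monotone sequence $\{L^{(n)}\}$ therefore converges to $L^{\infty}:=c^{\top}x^{*}\le\opt$. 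On the other hand $P^{\star}$ is exactly \eqref{opt:micp} with the additional constraint $x_Z=v$ (the integrality $x_Z\in\mathbb{Z}^{l_1}$ being automatic for the integral vector $v$), so $U^{\star}\ge\opt$; and feasibility of $x^{*}$ for $P^{\star}$ gives $U^{\star}\le c^{\top}x^{*}=L^{\infty}$. Chaining these, $L^{\infty}\le\opt\le U^{\star}\le L^{\infty}$, so $U^{\star}=\opt=c^{\top}x^{*}$. Since $x^{*}$ is feasible for $P^{\star}$ and attains $U^{\star}$, it is an optimal solution of \eqref{opt:convex-x1*} for every $n_i$ with $i\ge I$.

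The ``moreover'' statement then follows at once: for $i\ge I$ the convex program \eqref{opt:conv-fix-x1} solved at iteration $n_i$ is $P^{\star}$, it is feasible, and it has the boundary point $x^{*}\in\textrm{bd}(\mathcal{C})$ among its optimal solutions, so the test in Line~\ref{lin:ray} of Algorithm~\ref{alg:micp-oracle} is satisfied. I expect the only delicate points to be the two structural facts driving the squeeze: (i) that Proposition~\ref{prop:int-seq-convg} freezes the integer components of $x^{(n_i)}$ so that \eqref{opt:convex-x1*} collapses to one fixed convex program $P^{\star}$ for all large $i$ (without this, "the optimal value of \eqref{opt:convex-x1*}" is not a single quantity to compare against), and (ii) the identification of $P^{\star}$ with \eqref{opt:micp} with the integers fixed at the integral vector $v$, which gives $U^{\star}\ge\opt$. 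The remaining ingredients --- validity of the added cuts, monotonicity of $\{L^{(n)}\}$, continuity of $c^{\top}x$, and $U^{\star}\le c^{\top}x^{*}$ --- are routine.
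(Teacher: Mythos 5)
Your proof is correct and reaches the paper's conclusion by a genuinely different, and arguably cleaner, route. The paper argues by contradiction at the level of solutions: it supposes $x^*$ is not optimal for the frozen-integer program, takes a strictly better point $\hat{x}$ of \eqref{opt:U*}, observes that $(x_Z^{(n_i)},\hat{x}_R)$ is feasible for \eqref{opt:micp} and hence for the master problem, and derives $c^{\top}(x_Z^{(n_i)},\hat{x}_R)<c^{\top}x^{(n_i)}$ for large $i$ from $x_R^{(n_i)}\to x_R^*$, contradicting optimality of $x^{(n_i)}$ for (MS-$n_i$). You instead run a direct squeeze at the level of optimal values, $c^{\top}x^*=\lim_i L^{(n_i)}\le \opt\le U^{\star}\le c^{\top}x^*$, using that the master problem is a relaxation of \eqref{opt:micp} and that $P^{\star}$ is \eqref{opt:micp} with the integers pinned to the integral vector $v$. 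Both arguments rest on exactly the same two structural facts --- validity of every added cut and the frozen integer block from Proposition~\ref{prop:int-seq-convg} --- but yours avoids the perturbed point $\hat{x}$ and the ``for sufficiently large $n_i$'' limit comparison of objectives, and as a bonus it delivers the identity $c^{\top}x^*=\opt=U^{\star}$ explicitly, which is essentially what the proof of Theorem~\ref{thm:finite-oracle-mixed-int-convex} needs downstream. The treatment of the ``moreover'' clause (feasibility of the polishing problem plus $x^*\in\textrm{bd}(\mathcal{C})$ from Lemma~\ref{lem:limit-x-in-K}) is the same in both.

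One justification in your write-up is off, although the conclusion stands: you assert that $x^*$ is feasible for $P^{\star}$ ``because $\mathcal{C}$ and $\{x:Ax\le b\}$ are closed and $x^{(n_i)}\to x^*$.'' Closedness of $\mathcal{C}$ does not by itself give $x^*\in\mathcal{C}$, because the master iterates $x^{(n_i)}$ are never in $\mathcal{C}$ (otherwise the algorithm would have stopped at Line~\ref{lin:x_in_C}). The membership $x^*\in\mathcal{C}$ comes only from Lemma~\ref{lem:limit-x-in-K}, which you do invoke a few lines later; the closedness-plus-convergence argument is valid only for the polyhedral part $Ax^*\le b$ and for the integrality of $x_Z^*$. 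Restating the feasibility of $x^*$ for $P^{\star}$ as a consequence of Lemma~\ref{lem:limit-x-in-K} (for $x^*\in\mathcal{C}$) together with Proposition~\ref{prop:int-seq-convg} (for $x_Z^*=v$) closes this small gap.
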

\begin{proof}
By Lemma~\ref{lem:limit-x-in-K}, the limit point $x^*$ of the subsequence
is in $\textrm{bd}({\cal C})$. By Proposition~\ref{prop:int-seq-convg}, 
there exists an iteration index $N$
such that $x^{(n_i)}_j=x^*_j\in\mathbb{Z}$ for all $n_i > N$ and $j\in[l_1]$.
Therefore, $x^*$ is a feasible solution of \eqref{opt:micp}. For simplicity let $x = (x_{{Z}},x_{{R}})$, where $x_{Z}$ denote the first $l_1$ (integer) variables and $x_{R}$ represent the continuous variables.  
For any $n_i > N$, let
\begin{align}
U^* &=\min\; c^{\top}x \quad\textrm{s.t. } \srevision{Ax = b}, \;x_j=x_j^{*},\; j \in [l_1], \;x\in {\cal C}. \label{opt:U*} 
\end{align}
We now show that $x^{*}$ is an optimal solution of \eqref{opt:U*}.
If not, then assume that $\hat{x}$ is an optimal solution 
of \eqref{opt:U*}, and it satisfies 
$c_{{R}}^{\top}\hat{x}_{{R}}<c_{R}^{\top}x_{R}^{*}$.
Obviously, $(x_{{Z}}^{(n_i)},\hat{x}_{{R}})=(x^*_Z,\hat{x}_R)$ is a feasible solution
of \eqref{opt:U*} for sufficiently large $n_i$. Moreover, we have 
\bdm
\ba
& c^{\top}(x_{{Z}}^{(n_i)},\hat{x}_{{R}}) = c_{{Z}}^{\top}x_{{Z}}^{(n_i)} 
+c_{{R}}^{\top}x_{{R}}^{*}-
c_{{R}}^{\top}\left(x_{{R}}^{*}-\hat{x}_{{R}}\right) 
< c_{{Z}}^{\top}x_{{Z}}^{(n_i)}+c_{{R}}^{\top}x^*_R.
\ea
\edm
Since $\lim_{i\to\infty}x^{(n_i)}_R=x^*_R$, the above inequality
implies that $c^{\top}(x_{{Z}}^{(n_i)},\hat{x}_{{R}})<c^\top x^{(n_i)}$
for sufficiently large $n_i$. But this contradicts with that $x^{(n_i)}$
is an optimal solution of the master problem at iteration $n_i$
which is a relaxation of \eqref{opt:micp}. Therefore, we have proved that 
$x^{*}$ is an optimal solution of \eqref{opt:U*}
for any $n_i$ that is sufficiently large. \qed
\end{proof}

\begin{theorem} \label{thm:finite-oracle-mixed-int-convex}
Let Assumptions \ref{ass:no-error} and \ref{ass:micp-ri} hold.
Algorithm~\ref{alg:micp-oracle} terminates in finitely many iterations after adding a finite number 
of cutting planes. \qrevision{When it terminates, either it detects that \eqref{opt:micp} is infeasible or
it returns an optimal solution of \eqref{opt:micp}.}
\end{theorem}
\begin{proof}
By contradiction, assume that Algorithm~\ref{alg:micp-oracle}
does not terminate in a finite number of iterations. 
Then let $\{x^{(n)}\}^{\infty}_{n=0}$ be the master problem solutions
generated by the algorithm.
Since the infinite sequence is bounded, it must contain
a convergent subsequence, namely $\{x^{(n_i)}\}^{\infty}_{i=1}$.
Assume that $x^*$ is the limit point of this subsequence.
By Lemma~\ref{lem:limit-x-in-K}, we know that 
$x^*\in\textrm{bd}(\cal C)$.
We focus on proving that $L^{(n_i)}=U^{(n_i)}$ for a sufficiently large $n_i$,
and hence the algorithm should terminate (Line~\ref{lin:L=U_conv}).

From Proposition~\ref{prop:int-seq-convg}, and Lemma~\ref{lem:convex-convg-subseq} we have an $\hat{i}$ such that
$x_{{Z}}^{(n_i)}=x_{{Z}}^{*}\in\mathbb{Z}^{l_1}$ for any $i\ge \hat{i}$ and we have $U^{(n_i)}=U^*$, where $U^*$ is defined as
\beq\label{opt:U*-defTheorm} 
\ba
 U^{*}  & =\min\; c^{\top}x \\  
 & \qquad\textrm{s.t. } Ax = b,  \;x_{{Z}}=x_{{Z}}^{*},\;x\in {\cal C}.
\ea
\eeq 
From Algorithm~\ref{alg:micp-oracle}, $x^{(n_i)}$ and $L^{(n_i)}$ are an optimal solution 
and the optimal value of the following {mixed-integer} linear program:
\beq\label{opt:relax1-thm}
\ba
&\min_{x\in\mathbb{Z}^{l_1}\times\mathbb{R}^{l_2}}\; c^{\top}x \\
& \,\, \quad \textrm{ s.t. } \quad \srevision{Ax = b}, \\
&\qquad \qquad  (x^{(k)}-z^{(k)})^{\top}x\le (x^{(k)}-z^{(k)})^{\top}z^{(k)} \quad\forall k\in[n_i-1], \\
&\qquad \qquad C^{(k)} x \le d^{(k)} \qquad \qquad \qquad \qquad \qquad \quad \, {\forall k\in\mI^{(n_i-1)}.}
\ea
\eeq
Note that the constraints $C^{(k)} x \le d^{(k)}$ for all $k\in\mI^{(n_i-1)}$ are the out-approximation valid inequalities generated at Step 3 
which are active at the solution $\tilde{x}^{(k)}$ of \eqref{opt:U*-defTheorm}.
Therefore, the following relation holds:
\bdm
\ba
L^{(n_i)} &=\min\; c^{\top}x 
\textrm{ s.t. }\left\{
\def\arraystretch{1.5}
\begin{array}{l}
\srevision{Ax = b}, \quad x\in\mathbb{Z}^{l_1}\times\mathbb{R}^{l_2},\\
(x^{(k)}-z^{(k)})^{\top}x\le (x^{(k)}-z^{(k)})^{\top}z^{(k)}\quad \forall k\in [n_i-1], \\
C^{(k)} x \le d^{(k)} \qquad \qquad \qquad \qquad \qquad \quad \, \forall k\in\mI^{(n_i-1)}
\end{array}
\right. \\
&=\min\; c^{\top}x 
\textrm{ s.t. }\left\{
\def\arraystretch{1.5}
\begin{array}{l}
Ax = b, \quad x_{{Z}}=x_{{Z}}^{*},\quad x\in\mathbb{R}^{l_1+l_2}, \\
(x^{(k)}-z^{(k)})^{\top}x\le (x^{(k)}-z^{(k)})^{\top}z^{(k)} \quad \forall k\in [n_i-1],  \\
C^{(k)} x \le d^{(k)} \qquad \qquad \qquad \qquad \qquad \quad \, \forall k\in\mI^{(n_i-1)}
\end{array}
\right. 
\ea
\edm
\begin{displaymath}
\begin{aligned}
&\ge \min\; c^{\top}x 
\textrm{ s.t. }\left\{
\def\arraystretch{1.5}
\begin{array}{l}
Ax = b, \quad x_{{Z}}=x_{{Z}}^{*},\quad x\in\mathbb{R}^{l_1+l_2},  \\
C^{(k)} x \le d^{(k)} \qquad\forall k\in\mI^{(n_i-1)} 
\end{array}
\right.  \\
&= \min\; c^{\top}x 
\textrm{ s.t. }\left\{
\def\arraystretch{1.5}
\begin{array}{l}
Ax = b, \quad x_{{Z}}=x_{{Z}}^{*}, \quad x\in\mathbb{R}^{l_1+l_2},\quad x \in {\cal C},\;\qrevision{\textrm{using Proposition~\ref{prop:lin-approx-convex-prog}}}
\end{array}
\right. \\
&=U^*=U^{(n_i)}.
\end{aligned}
\end{displaymath}
\qrevision{This proves that the algorithm must terminate in a finite number of iterations.}

\qrevision{If the algorithm terminates and claims that the problem is infeasible, 
Line~\ref{lin:micp-infeasb} of the algorithm indicates that the 
master problem \eqref{opt:micp-master} is infeasible. However,
since the master problem is a relaxation of \eqref{opt:micp},
it further implies that \eqref{opt:micp} is also infeasible. 
On the other hand, if the algorithm returns a solution, it must be an optimal
solution because the solution is optimal to the master problem and feasible
to the original problem. This concludes the proof.  \qed}
\end{proof}

\section{A Finitely Convergent Cutting-plane Decomposition Algorithm for Solving \textrm{(JMICP)}}
\label{sec:decomp-cut-plane}
We now focus on developing a finitely convergent cutting-plane 
decomposition algorithm for \eqref{opt:micp-xy}. 
The general idea of this algorithm is as follows:
The problem \eqref{opt:micp-xy} is decomposed into a first-stage master problem 
in the space of $[x,\eta]$ and a second-stage problem in the space of $y$ variables, where $\eta$ is an auxiliary variable that links the first-stage and the second-stage problems. 
In particular, the master problem at the beginning of iteration $m$  
is formulated as the following mixed 0-1 linear program:
\beq\label{opt:xy-master}
\ba
&\min_{x\in\mX\cap\{0,1\}^{l_1}}\; c^\top x + \eta \\
&\quad\;\text{ s.t. }\; \eta \ge a^{k\top} x+b^k \quad \qrevision{\forall k\in\mtc{R}^{m-1},} \\
&\qquad\quad\; \qrevision{u^{k\top}x+v^k\ge0 \quad \, \forall k\in\mtc{A}^{m-1},}
\ea
\eeq
where $\eta\ge a^{k\top}x+b^k$ is a valid inequality added to the first-stage
at a previous iteration $k$ \qrevision{and $u^{k\top}x+v^k\ge0$ is a feasibility cut added
to the first-stage problem at iteration $k$. The $\mtc{R}^{m-1}$ (resp. $\mtc{A}^{m-1}$)
is the set of iteration indices at which a valid inequality (resp. a feasibility cut) is
added until the end of iteration $m-1$. }
The details on the generation of these inequalities are given later. 
The algorithm calls the cutting-plane oracle from 
\cite{balas1993-lift-proj-cut-plane-0-1-LP} \qrevision{(i.e., without branching)} to solve the master problem \qrevision{(a mixed 0-1 linear program) to optimality} and obtains the first-stage solution $x^m$.

At the second stage of iteration $m$, one needs to solve the following problem to optimality:
\beq\label{opt:y-micp}
\ba
&\min_{ y\in\mY\cap(\mbZ^{l_2}\times\mbR^{l_3})} h^\top y  \\
&\qquad\text{s.t } \srevision{T x^m + W y = q,}\\
&\qquad\quad\,\, g_i(x^m,y)\le 0 \quad \forall i\in\mtc{I}.
\ea
\eeq
We will develop a generalized parametric cutting-plane algorithm 
(presented in Section~\ref{sec:param-cut-plane}) which shares the same 
spirit as the cutting-plane algorithm developed in Section~\ref{sec:alg-micp}
to solve \eqref{opt:y-micp} without using explicit branching on integer variables, 
such that upon termination, the parametric cutting-plane algorithm generates the following linear relaxation
problem of \eqref{opt:y-micp} if it is feasible:
\beq\label{opt:y-lp}
\ba
&\min_{y\in\mbR^{l_1}\times\mbR^{l_2}}\; h^\top y \\
&\;\text{ s.t } \srevision{T x^m + W y = q,}\\
& \qquad C^m x^m + D^m y \le F^m,
\ea
\eeq
where the matrices satisfy the following conditions:
\begin{itemize}
	\item There exists a common optimal solution (denoted as $y^m$)
		of \eqref{opt:y-micp} and \eqref{opt:y-lp};
	\item The linear inequalities $C^mx+D^my\le F^m$ are valid for the original feasible set 
		$\{[x,y]:\; \srevision{T x + W y = q,}\; g_i(x,y)\le 0 \; \forall i\in\mtc{I},\; x\in\mX\cap\{0,1\}^{l_1},\; 
		   y\in\mY\cap(\mbZ^{l_2}\times\mbR^{l_3})\}$ of \eqref{opt:micp-xy}.
\end{itemize}
Let $\mu^m$ and $\lambda^m$ be the optimal dual vectors corresponding to the equality and inequality constraints in \eqref{opt:y-lp}
\srevision{($\mu^m$ and $\lambda^m$ satisfies the constraints $W \mu^m = h$,
 ${D^m}^\top \lambda^m + h = 0$ and $\lambda^m\ge 0$)}.
We can then generate the following Benders' cut using strong duality:
\beq\label{eqn:bender-cut}
\srevision{\eta \ge (\lambda^{m\top}C^m - \mu^{m \top} T) x - (\lambda^{m\top}F^m - \mu^{m \top} q)}.
\eeq 
The above inequality will serve as an additional inequality added to the 
master problem \eqref{opt:xy-master} at the end of iteration $m$, i.e., 
$\eta\ge a^{m\top} x+b^m$ with \srevision{$a^m=(\lambda^{m\top}C^m - \mu^{m \top} T)^{\top}$ and
$b^m=- (\lambda^{m\top}F^m - \mu^{m \top} q)$}. 

\qrevision{If our generalized parametric cutting-plane algorithm detects that \eqref{opt:y-micp} is infeasible with respect to the first-stage solution $x^m$, we  generate a feasibility cut or a cover inequality as follows:
\beq\label{eqn:feasb-cut}
\sum_{j\in{\mtc{J}_0}}x_j +  \sum_{j\in{\mtc{J}_1}}(1-x_j) \ge 1,
\eeq
where $\mtc{J}_0=\{j:\;x^m_j=0\}$ and $\mtc{J}_1=\{j:\;x^m_j=1\}$.
This inequality is denoted as $u^{m\top}x+v^m\ge0$ and added to the master problem \eqref{opt:xy-master}.
}

The core of the decomposition algorithm is to generate the linear relaxation \eqref{opt:y-lp}
of \eqref{opt:y-micp} with the desired properties. In Section~\ref{sec:param-cut-plane}, 
we present a parametric cutting-plane algorithm to obtain the linear relaxation,
and in Section~\ref{sec:decomp-alg}, we provide the decomposition algorithm based on
the parameter cut generation. {The convergence of these algorithms is analyzed.}

\subsection{A generalized parametric cutting-plane algorithm}
\label{sec:param-cut-plane}
\qrevision{In this section, we utilize a parametric cutting-plane oracle 
developed in Appendix C of \cite{luo2019_DR-TSS-MICP}.
%{This approach to generating parametric cutting planes for the first-stage variables originates from Section 3 of \cite{sen2006_decomp-BB-TSS-MIP}.}
For reference purposes, we denote this oracle as \textbf{oracle-PMILP}.
We first explain its functionality. 
The oracle is designed to solve (or detect the infeasibility of) a 
general MILP of mixed-integer variables $y$ with a parametric variable $x$ as follows:
\beq\label{opt:gen-param-milp}
\ba
&\min_{y\in\mbZ^{l_2}\times\mbR^{l_3}}\; h^\top y  \\
&\text{ s.t. } Tx + W y = q, \\
&\qquad y\in{\cal Y},
\ea
\eeq 
where $\cal Y$ is a polytope.
For any fixed $x=\hat{x}\in\{0,1\}^{l_1}$, the oracle generates parametric cutting planes 
and eventually generates the following LP relaxation:
\beq\label{opt:param-LP}
\ba
&\min_{y\in\mbR^{l_2+l_3}}\; h^\top y  \\
&\text{ s.t. } T\hat{x} + W y = q, \\
&\qquad A\hat{x} + By \le d, \\
&\qquad y\in{\cal Y},
\ea
\eeq 
where $A\hat{x} + By \le d$ are additional cutting planes generated by the oracle such that the following properties hold: (1) An optimal solution of \eqref{opt:param-LP}
obtained by the oracle is also an optimal solution of \eqref{opt:gen-param-milp} for $x=\hat{x}$.  
(2) {The inequalities $Ax + By \le d$ are valid for the feasible set $\{(x,y)\in\{0,1\}^{l_1}\times\mbR^{l_2+l_3},\;y\in\mbZ^{l_2}\times\mbR^{l_3}:\;\;Tx+Wy=q,\;y\in{\cal Y}\}$.} 
We also remark that this oracle is developed based on the pure cutting-plane method provided in 
\cite{ChenKucukyavuzSen2011}.
}

We now develop a generalized parametric cutting-plane algorithm to solve
a parametric mixed-integer convex program in the form:
\beq\label{opt:y-micp2}
\ba
&\min_{y\in\mY\cap(\mbZ^{l_2}\times\mbR^{l_3})}\; h^\top y \\
&\;\text{ s.t } \srevision{T \hat{x} + W y = q,}\\
& \qquad g_i(\hat{x},y)\le 0 \quad \forall i\in\mtc{I}, 
\ea
\tag{PMICP}
\eeq
where $\hat{x}\in\mtc{X}\cap\{0,1\}^{l_1}$ is a first stage solution passed to the problem
as a parameter like in \eqref{opt:y-micp}. 
{We note that} solution $\hat{x}$ is fixed when solving \eqref{opt:y-micp2}, but it is subject to change when solving the master problem in future iterations.

The high-level idea of this algorithm is to first linearize the convex constraints
in \eqref{opt:y-micp2} to get the following parametric mixed-integer linear program 
which is a relaxation of \eqref{opt:y-micp2}:
\beq\label{opt:param-milp}
\ba
&\min_{y\in\mbZ^{l_2}\times\mbR^{l_3}}\; h^\top y  \\
&\text{ s.t. } \srevision{T \hat{x} + W y = q,}\\
& \qquad \bar{T} \hat{x} + \bar{W} y \le \bar{q},
\ea
\eeq 
where $\hat{x}\in\mtc{X}\cap\{0,1\}^{l_1}$ is the problem parameter,
and $(\Bar{T}, \Bar{W}, \bar{q})$ are some matrices induced by linearization. 
\qrevision{It then solves \eqref{opt:param-milp} using \textbf{oracle-PMILP}. }

More specifically, the generalized parametric cutting-plane algorithm works as follows:
At the beginning of the $n^{\text{th}}$ main iteration of the algorithm, 
we encounter a master problem in the form
\beq\label{opt:master-x-y}
\ba
&\min_{y\in\mbZ^{l_2}\times\mbR^{l_3}}\; h^\top y \\
&\text{ s.t. } \srevision{T \hat{x} + W y = q,}\\
&\qquad a^{(k)\top}\hat{x}+b^{(k)\top}y + c^{(k)} \le 0 \quad \forall k\in[n-1] \\
&\qquad \qrevision{D^{(k)}_x\hat{x} +D^{(k)}_yy \le d^{(k)}} \qquad \quad \, \forall k\in\mtc{I}^{(n-1)}, \\
&\qquad \qrevision{y\in\mtc{Y}},
\ea
\tag{PMS-$n$}
\eeq
where $a^{(k)\top}\hat{x}+b^{(k)\top}y + c^{(k)} \le 0$ is the cutting plane
generated at iteration $k$ induced by a projection problem (referred as projection cuts),
\qrevision{and $D^{(k)}_x\hat{x} +D^{(k)}_yy \le d^{(k)}$ are outer-approximation cuts.
The coefficients and matrices in these inequalities are given below. 
Note that \eqref{opt:master-x-y} is in the form \eqref{opt:param-milp},
and hence can be solved using \textbf{oracle-PMILP}. We now explain the procedures for generating the projection cuts and the outer-approximation cuts for the parametric problem.}

Suppose $y^{(n)}$ is an optimal solution of \eqref{opt:master-x-y}.
The projection problem that leads to $a^{(n)\top}\hat{x}+b^{(n)\top}y + c^{(n)} \le 0$
is formulated as
\beq\label{opt:param-proj}
\ba
& \min\; \|x-\hat{x}\|^2+\|y-y^{(n)}\|^2  \\
& \text{ s.t. } g_i(x,y)\le 0 \quad \forall i\in\mtc{I}, \;\; y\in\mtc{Y}.
\ea
\eeq
Let $(x^*,y^*)$ be the optimal solution of the projection problem.
Then the coefficients of the inequality are determined by the hyperplane passing through
$(x^*,y^*)$ with the normal vector $(\hat{x}-x^*,y^{(n)}-y^*)$. Specifically, we have
\beq\label{eqn:coef-abc}
\ba
&a^{(n)}=\hat{x}-x^*, \\
&b^{(n)}=y^{(n)}-y^*, \\ 
&c^{(n)}=-(\hat{x}-x^*)^\top x^*-(y^{(n)}-y^*)^\top y^*.
\ea
\eeq
Note that the general form of the projection cut should be
\beq\label{eqn:proj_cut}
a^{(n)\top}x+b^{(n)\top}y+c^{(n)}\le 0. 
\eeq
We set $x=\hat{x}$ and add the above inequality to update \eqref{opt:master-x-y}.

The \qrevision{outer-approximation cuts (or supporting valid inequalities)} at iteration $n$ are generated 
based on the following procedures similar to the one described in Section~\ref{sec:alg-micp}. 
First, we solve a convex optimization problem in which all integer variables are fixed:
\beq\label{opt:conv-fix-yI}
\ba
&\min\; h^\top y \\
&\;\text{ s.t. } \srevision{T \hat{x} + W y = q,}\\
&\qquad\; g_i(\hat{x},y)\le 0 \quad \forall i\in\mtc{I}, \\
&\qquad\; y_Z = y^{(n)}_Z,\; y\in\mY,
\ea
\eeq
where $y_Z$ represent the integer variables in $y$. {Recall that convex relaxation of \eqref{opt:micp-xy} is denoted as:}
\beq\label{eqn:set_S}
\mtc{S}:=\{(x,y):\;g_i(x,y)\le0\;\forall i\in\mtc{I},\;y\in\mtc{Y}\}.
\eeq
Suppose \eqref{opt:conv-fix-yI} is feasible and {$\hat{y}$} is an optimal solution of \eqref{opt:conv-fix-yI}. 
{
Then point $[\hat{x},y^*]$ is either in $\text{int}(\mathcal{S})$ or at $\text{bd}(\mathcal{S})$. 
In the first case, Algorithm~2 proceeds to next iteration (see line 29). In the second  case we generate outer-approximation cuts.
}

\begin{lemma}\label{lem:h_in_cone}
\qrevision{Let Assumption~\ref{ass:non-empty relative interior} hold.
Let $y^*$ be an optimal solution of \eqref{opt:conv-fix-yI}
such that $(\hat{x},y^*)\in\text{bd}(\mtc{S})$. 
Then it is possible to compute a vector $d_y\in\cN_g(y^*)$
where $\cN_g(y^*)$ is the normal cone of $\{y: g_i(\hat{x},y)\le0\; \forall i\in\mtc{I}\}$ at $y^*$,
such that the following linear program is a relaxation of \eqref{opt:conv-fix-yI}:
\begin{equation}\label{eqn:obj-equal}
\begin{aligned}
\min\; h^\top y \quad\emph{s.t. }
	\left\{\def\arraystretch{1.5}
	\begin{array}{l}
        \srevision{W y = q - T \hat{x}}, \\
	d^\top_y(y-y^*)\le 0, \\
	y_Z=y^{(n)}_Z, \\
	y\in\mtc{Y},
	\end{array}
	\right.
\end{aligned}
\end{equation}
where recall that $\mtc{Y}$ is a polytope. 
Furthermore, \eqref{eqn:obj-equal} and \eqref{opt:conv-fix-yI} have the same optimal value.}
\end{lemma}
\begin{proof}
{The lemma follows directly from Proposition \ref{prop:lin-approx-convex-prog}, 
upon noting that \eqref{opt:conv-fix-yI} is a special case of \eqref{opt:convex-opt}.
In particular, the linear equality constraints $Wy=q-T\hat{x}$ in \eqref{opt:conv-fix-yI} corresponds to 
$Ax=b$ in \eqref{opt:convex-opt}, and the set $\cS$ in \eqref{opt:conv-fix-yI} corresponds to
$\cal B$ in \eqref{opt:convex-opt}. The cone $\cN_{\cal B}$ can be further decomposed as:
$\cN_{\cal B}=\cN_g(y^*)+\cN_Z(y^*)+\cN_{\mtc{Y}}(y^*)$, 
where $\cN_Z(y^*)$ and $\cN_{\mtc{Y}}(y^*)$ 
are respectively normal cones of the sets $\{y: y_Z=y^{(n)}_Z\}$
and $\mtc{Y}$ at $y^*$. The rest of the proof is similar as that of Proposition~\ref{prop:lin-approx-convex-prog}.  \qed}
\end{proof}

\begin{remark}\label{rmk:param-representation}
\qrevision{Due to Assumption~\ref{ass:subgrad-decomp},
the valid inequality $d^\top_y(y-y^*)\le0$ in \eqref{eqn:obj-equal} 
can be equivalently written as $d^\top_x(\hat{x}-\hat{x})+d^\top_y(y-y^*)\le0$,
for some vector $d_x$ such that the joint vector $d=(d_x;d_y)$ is in the normal
cone $\cN_G((\hat{x};y^*))$ where $G=\{(x;y): g_i(x,y)\le0\; \forall i\in\mtc{I}\}$.
Notice that the inequality $d^\top_x(x-\hat{x})+d^\top_y(y-y^*)\le0$ in the 
$(x,y)$-space are valid for the set $\mtc{S}$ due to the convexity of each $g_i$.}
\end{remark}

\qrevision{In Algorithm~\ref{alg:param-micp}, the inequality 
$d^\top_x(\hat{x}-\hat{x})+d^\top_y(y-y^*)\le0$
generated at iteration $k$ is denoted as 
\beq\label{eqn:Dx+Dy}
D^{(k)}_x\hat{x}+D^{(k)}_yy\le{d}^{(k)},
\eeq
where $D^{(k)}_x=d^\top_x$, $D^{(k)}_y=d^\top_y$ and $d^{(k)}=d^\top_x\hat{x}+d^\top_yy^*$.}

\begin{theorem}\label{thm:convg-gen-param-cut-plane}
\qrevision{Suppose Assumptions \ref{ass:non-empty relative interior} and \ref{ass:no-error} hold.
\newline
(1) All inequalities $a^{(k)\top}x+b^{(k)\top}y + c^{(k)} \le 0$
and $D^{(k)}_xx +D^{(k)}_yy \le d^{(k)}$ with the coefficients and matrices
defined by \eqref{eqn:coef-abc} and \eqref{eqn:Dx+Dy} generated 
in Algorithm~\ref{alg:param-micp} are valid for points in the set $\mtc{S}$ defined in \eqref{eqn:set_S}.}
\newline
(2) \qrevision{The generalized parametric cutting-plane algorithm (Algorithm~\ref{alg:param-micp}) 
terminates in a finite number of iterations with either
solving the parametric mixed-integer convex program \eqref{opt:y-micp2} to optimality 
or detecting it is infeasible.}
\end{theorem}
\begin{proof}
\qrevision{(1) First, we note that all cuts are generated at boundary points of $S$
to build its outer approximation. Therefore, they are valid for all points in $\mtc{S}$. 
(2) The proof is similar to the proof of Theorem~\ref{thm:finite-oracle-mixed-int-convex},
in which Lemma~\ref{lem:h_in_cone} is the counterpart of Proposition~\ref{prop:lin-approx-convex-prog}
used for establishing the optimality condition.  \qed}
\end{proof}

\begin{algorithm}
	\caption{An algorithm for solving a parametric mixed-integer convex program \eqref{opt:y-micp2}.}
	\label{alg:param-micp}
	\begin{algorithmic}[1]
        \State{{\textbf{Input:} $\hat{x}$}}
	\State{Set $n\gets 1$, $\mI^{(0)}\gets\emptyset$.}
	\Loop
		\State{(Start iteration $n$.)}
		\State{\textbf{Step 1:} Solve the master problem \eqref{opt:master-x-y} using the \textbf{oracle-PMILP} introduced at the beginning of Section~\ref{sec:param-cut-plane} to get an optimal solution $y^{(n)}$
        and optimal objective denoted as $L^{(n)}$. If \textbf{oracle-PMILP} detects that \eqref{opt:master-x-y} is infeasible, then claim \eqref{opt:y-micp2} is infeasible.}
		\State{Let $y^{(n)}=(y_Z^{(n)},y_R^{(n)})$, where $y_Z^{(n)} \in \mathbb{Z}^{l_2}$ 
		and $y_R^{(n)} \in \mathbb{R}^{l_3}$}.
		\If{$(\hat{x},y^{(n)})\in\mtc{S}$} 
			\State{Stop and return $y^{(n)}$ as an optimal solution of \eqref{opt:y-micp2};}
		\EndIf
		\State{\textbf{Step 2:} Solve \eqref{opt:param-proj} to get the projection point $(x^*,y^*)$ of $(\hat{x},y^{(n)})$ on $\cal S$ (defined in \eqref{eqn:set_S}).}
		\State{Add the separation (projection) cut $a^{(n)\top}\hat{x}+b^{(n)\top}y + c^{(n)} \le 0$ (defined in \eqref{eqn:proj_cut})
        			    to \eqref{opt:master-x-y}.} \label{lin:add_ax+by+c<0}
		\State{\textbf{Step 3:} Solve the continuous convex program \eqref{opt:conv-fix-yI}.}
		\If{\eqref{opt:conv-fix-yI} is feasible} 
			\State{Obtain an optimal solution $\hat{y}^{(n)}$ of \eqref{opt:conv-fix-yI} such that $(\hat{x},\hat{y}^{(n)})$ is at the boundary of the set}
			\State{\qrevision{$\{(x,y): Tx+Wy=q,\;g_i(x,y)\le0\;\forall i\in\mtc{I},\;x\in\mtc{X},\;y\in\mtc{Y}\}$.}}
			\State{\qrevision{This is achievable due to the linearity of the objective function.} }
			\State{Let $U^{(n)}$ be the optimal value of \eqref{opt:conv-fix-yI}.}
			\If{$L^{(n)}=U^{(n)}$} 
				\State{Stop and return $\hat{y}^{(n)}$ as an optimal solution of \eqref{opt:y-micp2}.}
			\EndIf
			\If{\qrevision{$\hat{y}^{(n)}$ satisfies that $(\hat{x},\hat{y}^{(n)})\in\text{bd}({\cal S})$ with $\cS$ defined in \eqref{eqn:set_S}.} } 
				\State{\qrevision{Generate the following outer-approximation inequalities defined in \eqref{eqn:Dx+Dy}:}} \label{lin:Dx+Dy}
				$$\qrevision{D^{(n)}_x\hat{x}+D^{(n)}_yy\le d^{(n)}}$$
				\State{and add them to \eqref{opt:master-x-y}.}  
				\State{Set $\mI^{(n)}\gets\mI^{(n-1)}\cup\{n\}$.}
                \Else 
                    \State{Set $\mI^{(n)}\gets\mI^{(n-1)}$.}
			\EndIf
		\Else
			\State{Set $\mI^{(n)}\gets\mI^{(n-1)}$. }
		\EndIf
		\State{$n\gets n+1$.}
	\EndLoop
	\end{algorithmic}
\end{algorithm}

\subsection{A decomposition cutting-plane algorithm for \eqref{opt:micp-xy}}
\label{sec:decomp-alg}
We now focus on developing a cutting-plane decomposition algorithm for solving
\eqref{opt:micp-xy} to optimality.
The master problem \eqref{opt:xy-master} can be iteratively updated by 
adding Benders' cuts generated from solving the second-stage problem \eqref{opt:y-micp}.
Specifically, Algorithm~\ref{alg:param-micp} is applied to solve \eqref{opt:y-micp} to optimality, and when the algorithm terminates, it generates a mixed-integer linear program (MILP) that
has the same optimal solution as \eqref{opt:y-micp}. \qrevision{The MILP is solved to optimality using  
\textbf{oracle-PMILP} introduced at the beginning of Section~\ref{sec:param-cut-plane}.
For a given $x$-space solution $x^m$, where $m$ is the main iteration index,
the \textbf{oracle-PMILP} generates a linear program \eqref{opt:y-lp} whose optimal solution is the same as 
\eqref{opt:y-micp}.} Then the Benders' cut \eqref{eqn:bender-cut} is generated by dualizing \eqref{opt:y-lp},
and it is added to the master problem for the main iteration $m+1$. 
The pseudo code of the cutting-plane decomposition algorithm is 
given in Algorithm~\ref{alg:decomp-cutting-plane}.

\begin{theorem}
Suppose Assumptions~\ref{ass:non-empty relative interior} and \ref{ass:no-error} hold.
The decomposition cutting-plane algorithm (Algorithm~\ref{alg:decomp-cutting-plane})
terminates in a finite number of iterations. When it terminates, it either
returns an optimal solution to \eqref{opt:micp-xy} or detects infeasibility.
\end{theorem}
\begin{proof}
We first show that the parametric cut generated for the first-stage problem
at every iteration is valid. It suffices to show that for any feasible solution $[x,y]$ 
of \eqref{opt:micp-xy}, the following inequality holds:
\beq
h^\top y \ge {a^m}^\top x +b^m \quad \forall m,
\eeq 
where the coefficients $a^m$ and $b^m$ are from the Benders' inequality 
$\eta\ge {a^m}^\top x+b^m$ in the master problem \eqref{opt:xy-master} 
for solving \eqref{opt:micp-xy}. 
The Benders' inequality is in the form \eqref{eqn:bender-cut}.
Recall that when the generalized parametric cutting-plane algorithm 
(Algorithm~\ref{alg:param-micp}) is applied to solve 
the second-stage problem \eqref{opt:y-micp}, 
it repeatedly solves a master problem \eqref{opt:master-x-y},
and it adds convexification cuts to update the master problem at every iteration. 
By Theorem~\ref{thm:convg-gen-param-cut-plane}, the convexification cuts added to the master problem are valid for the set $\mtc{S}$.
It implies that the linear program obtained upon termination of Algorithm~\ref{alg:param-micp}
has the form 
\beq\label{opt:min_hy_Cx+Dy<F}
\min h^\top y \quad \text{s.t. } \srevision{T x^m + W y = q,}\; C^m x^m + D^m y \le F^m,
\eeq
where $C^m$, $D^m$, $F^m$ are some coefficient matrices as in \eqref{opt:y-lp}.
Then it follows that the Benders' cut
$\eta\ge {a^m}^\top x+b^m$ generated from the linear program should be 
valid for the set $\{(x,y,\eta):\;h^\top y\le\eta, \; \srevision{T x + W y = q,} \;g_i(x,y)\le 0\;\forall i\in\mtc{I},\;y\in\mY\}$.
We also note that the feasibility cut $u^{k\top}x+v^k\ge0$ of the form \eqref{eqn:feasb-cut} 
only cuts off first-stage solutions that lead to infeasible second-stage problems and are valid for other solutions.

{Next, we show that the algorithm terminates in a finite number of iterations.
We prove it by contradiction.}
For the first-stage solution $x^m$ obtained at the main iteration $m$ of Algorithm~\ref{alg:decomp-cutting-plane},
let $\text{obj}^*_2(x^m)$ denote the optimal objective of the second-stage
problem \eqref{opt:y-micp} at $x=x^m$. Strong duality {of its linear relaxation~\eqref{opt:y-lp}} implies that
{$\text{obj}^*_2(x^m)={a^m}^\top x^m+b^m$}. Let $L^m$ and $U^m$ be the value of $L$ and $U$
at the end of the main iteration $m$. Suppose Algorithm~\ref{alg:decomp-cutting-plane}
does not terminate in a finite number of iterations. 
Since the feasible set {$\mtc{X}\cap\{0,1\}^{l_1}$} is finite, 
there exist $m$ and $n$ ($m>n$) such that $x^m=x^n$. We have
\bdm
\ba
U^m &= c^\top x^m+\text{obj}^*_2(x^m) \\
&\ge L^m= c^\top x^m+\eta^m \\
&\ge c^\top x^m + {a^n}^\top x^m+b^n \\
&=c^\top x^n + {a^n}^\top x^n+b^n \\
&=c^\top x^n + \text{obj}^*_2(x^n) \\
&=U^n,  
\ea
\edm
where we use the fact that $\eta^m\ge{a^n}^\top x^n+b^n$ since 
$\eta\ge {a^n}^\top x+b^n$ is a constraint in the master problem 
\eqref{opt:xy-master} at the main iteration $m$ of Algorithm~\ref{alg:decomp-cutting-plane}.
The above inequalities indicate that $U^m=L^m=U^n$, and hence the algorithm should terminate
at or before the main iteration $m$. 

{Finally, we show that either the algorithm returns an optimal solution or detects infeasibility. 
Note that when the algorithm terminates, exactly one of the two cases occurs based on the termination criteria:
Case 1: The master problem \eqref{opt:xy-master} is infeasible in the last iteration 
(Line~\ref{lin:alg_JMICP_infeasible} of Algorithm~\ref{alg:decomp-cutting-plane}).  
Since it is proved before that all parametric inequalities added to the master problem are valid for the first-stage,
the infeasibility of the master problem implies that \eqref{opt:micp-xy} does not have a feasible first-stage solution. 
Therefore, the algorithm detects infeasibility. 
Case 2: The algorithm returns a solution $(x^m,y^m)$ (Line~\ref{lin:return_sol} of Algorithm~\ref{alg:decomp-cutting-plane}). 
Our previous argument shows that $(x^m,y^m)$ is valid for \eqref{opt:micp-xy}.
Since the while-loop terminates, we conclude that $L=U$, which implies the optimality of the solution. }
 \qed
\end{proof}

\begin{algorithm}
	\caption{A decomposition cutting-plane algorithm for \eqref{opt:micp-xy}.}
	\label{alg:decomp-cutting-plane}
	\begin{algorithmic}[1]
	\State{Set $m\gets 1$, $L\gets -\infty$ and $U\gets\infty$.}
	\While{$L<U$}
		\State{Solve the current first-stage problem \eqref{opt:xy-master} for the main iteration $m$
			   using the cutting-plane oracle from \cite{ChenKucukyavuzSen2011}, and get the optimal solution $x^m$.
			   \qrevision{If \eqref{opt:xy-master} is infeasible, then stop and claim that \eqref{opt:micp-xy} is infeasible.} } \label{lin:alg_JMICP_infeasible}
		\State{Set $L\gets obj\eqref{opt:xy-master}$, where $obj\eqref{opt:xy-master}$ represents the optimal objective of \eqref{opt:xy-master}.}
		\State{Substitute $x^m$ into the parametric MICP \eqref{opt:y-micp}, 
			   and apply Algorithm~\ref{alg:param-micp} to solve it.}
		\State{{Let $y^m$ be an optimal solution of \eqref{opt:y-micp} given $x^m$.}}
		\State{Set $U\gets c^\top x^m +  obj\eqref{opt:y-micp}$.}
		\If{Algorithm~\ref{alg:param-micp} returns an optimal solution to \eqref{opt:y-micp}}
			\State{A linear program \eqref{opt:y-lp} is established upon termination. Generate
				  a Benders' cut \eqref{eqn:bender-cut} from the dual objective of \eqref{opt:y-lp},
			   	  and add it to \eqref{opt:xy-master} as $\eta\ge {a^m}^\top x+b^m$.}
		\Else
			\State{\eqref{opt:y-micp} is detected to be infeasible. Generate a feasibility cut $u^{m\top}x+v^m\ge0$ of the form \eqref{eqn:feasb-cut}, 
			 and add it to the master problem \eqref{opt:xy-master}.}
		\EndIf
		\State{Set $m\gets m+1$.}
	\EndWhile
	\State{\Return {$(x^m,y^m)$} as an optimal solution of \eqref{opt:micp-xy}.} \label{lin:return_sol}
	\end{algorithmic}
\end{algorithm}

\section{A Parametric {Cutting-plane} Algorithm for Solving 
Distributionally-Robust Two-Stage Stochastic Mixed-Integer Convex Programs}
\label{sec:cut-plane-two-stage-convex-prog}
The decomposition cutting-plane algorithm (Algorithm~\ref{alg:decomp-cutting-plane}) can be 
extended to solve the following class of distributionally-robust two-stage mixed-integer convex programs
with finite convergence:
\beq\label{opt:TSS-MICP}
\ba
&\min_{x}\; c^{\top}x + \max_{P\in\mP}\E_{\xi\sim P}[\mQ(x,\xi)] \\
&\;\;\textrm{s.t. }\; Ax\le b,\; x\in\mC,\; x\in\{0,1\}^{l_1}
\ea
\tag{DR-TSS-MICP}
\eeq
where $\mC$ is a convex set for the first-stage variable, $Ax\le b$
defines a polytope (i.e., bounded),
$\xi$ is a random vector having a finite support denoted as
$\Set*{\xi^{\omega}}{\omega\in\Omega}$ ($\Omega$ is a finite set of scenarios), 
$P$ is a candidate probability distribution of $\xi$, {$\mP \subset \mathbb{R}^{|\Omega|}$} is an ambiguity set of probability distributions, 
and $\mQ(x,\omega)$ is the recourse function at the 
scenario $\omega\in\Omega$, where $\Omega$ is a finite set. {We assume that $\mP$ is a compact set, and an oracle is available to optimize a linear function over this set. Examples of such ambiguity sets include a Wasserstein-metric based sets, moment based sets, total variance based sets, etc. that can be defined for $\xi$ \cite{rahimian2019_dro-review}}.   The recourse function $\mQ(x,\xi^\omega)$ is given as
\beq
\label{opt:second-stage}
\tag{\textrm{Sub}$(x,\omega)$}
\ba
&\mQ(x,\xi^\omega)=\min_{y^{\omega}}\; {h^\omega}^\top y^\omega \\
&\qquad\qquad\quad\textrm{ s.t. }
\srevision{T^{\omega} x + W^{\omega} y^{\omega} = q^{\omega}} {,} \\
&\qquad\qquad\qquad\quad  g^\omega_i(x,y^\omega)\le 0 \qquad\forall i\in\mtc{I}, \\
&\qquad\qquad\qquad\quad y^\omega\in\mtc{Y}^\omega\cap(\mbZ^{l_2}\times\mbR^{l_3}),
\ea
\eeq

where every $g^\omega_i$ ($i\in\mtc{I}$, $\omega\in\Omega$) is a convex (but not necessarily smooth) 
function in $(x,y^\omega)$, $\cal Y^\omega$ is a polytope.
We denote the minimization problem in \eqref{opt:second-stage} as {$\textrm{Sub}(x,\omega)$}.

\qrevision{The following assumption on \eqref{opt:TSS-MICP} is the counterpart of 
Assumption~\ref{ass:non-empty relative interior}.}
\begin{assumption}\label{ass:senario-ri}
\qrevision{For any $\hat{x}\in\mX\cap\{0,1\}^{l_1}$ and any $\hat{z}\in\mathbb{Z}^{l_2}$, 
either $\mtc{S}^\omega(\hat{x},\hat{z})$ is empty or it has a non-empty relative interior,
where $\mtc{S}^\omega(\hat{x},\hat{z})=\{y\in\mbR^{l_2+l_3}:\;T^\omega\hat{x} + W^\omega{y} = q,\;g^\omega_i(\hat{x},y)\le 0 \; 
\forall i\in\mtc{I},\;y\in\mtc{Y}^\omega,\; y_j=\hat{z}_j\;\forall j\in[l_2]\}$.}
\end{assumption}

\qrevision{The pure cutting-plane method for solving \eqref{opt:TSS-MICP}
is given in Algorithm~\ref{alg:decomp-BC} which is an extension of 
Algorithm~\ref{alg:decomp-cutting-plane} with multiple second-stage
scenario-based subproblems. Note that Algorithm~\ref{alg:decomp-cutting-plane} and  Algorithm~\ref{alg:decomp-BC} can be modified to retain all or some of the previously added outer-approximation cuts for the master problem. 
At the $m^{\textrm{th}}$ main iteration, the first-stage 
master problem of \eqref{opt:TSS-MICP} is given as 
\beq\label{opt:master}
\ba
&\min_{x\in\mC\cap\{0,1\}^{l_1}}\;c^{\top}x + \eta \\
&\quad\;\textrm{ s.t. } Ax\le b, \\
&\qquad\quad \eta\ge\sum_{\omega\in\Omega}p^{k\omega} r^{k\omega\top}x
+\sum_{\omega\in\Omega}p^{k\omega}u^{k\omega} \quad\forall k\in\mtc{R}^{m-1}, \\
&\qquad\quad v^{k\top}x+t^k\ge0 \qquad \qquad \qquad \qquad \quad \,\forall k\in\mtc{A}^{m-1}, 
\ea
\eeq
where $\eta\ge\sum_{\omega\in\Omega}p^{k\omega} r^{k\omega\top}x
+\sum_{\omega\in\Omega}p^{k\omega}u^{k\omega}$
is the Benders' cut and $v^{k\top}x+t^k\ge0$ is a feasibility cut  added 
to the master problem at the $k^{\textrm{th}}$ iteration of the main algorithm.
$\mtc{R}^{m-1}$ and $\mtc{A}^{m-1}$ are respectively the sets of iteration indices 
at which the Benders' cuts and feasibility cuts are added until the end of the main iteration $m$.

The derivation of these inequalities is given below. 
For a given first-stage solution $\hat{x}$, apply Algorithm~\ref{alg:param-micp}
to solve the second-stage subproblem Sub($\hat{x},\omega$).
If the algorithm claims that Sub($\hat{x},\omega$) is infeasible, we generate the following 
feasibility cut:
\beq\label{eqn:tss-feasb-cut}
\sum_{j\in{\mtc{J}_0}}x_j +  \sum_{j\in{\mtc{J}_1}}(1-x_j) \ge 1,
\eeq
where $\mtc{J}_0=\{j:\;\hat{x}_j=0\}$ and $\mtc{J}_1=\{j:\;\hat{x}_j=1\}$.
This inequality is denoted as $v^{k\top}x+t^k\ge0$ and added to the master problem \eqref{opt:master}.

On the other hand, suppose Sub($\hat{x},\omega$) is feasible.
Then Algorithm~\ref{alg:param-micp} returns an optimal solution of Sub($\hat{x},\omega$) and generates 
a linear program upon termination. Denote the linear program as:}
\beq\label{opt:two-stage-relax-full}
\ba
&\min_{y^\omega \in\mbR^{l_2+l_3}}\; h^{\omega\top}y^{\omega} \\
&\;\textrm{ s.t. }\;
 \srevision{W^{\omega} y^{\omega} = q^{\omega} - T^{\omega} \hat{x}, \quad\textrm{dual } \theta^\omega}\\
& \qquad\;\; Q^{\omega}y^\omega \ge s^\omega - R^{\omega}\hat{x}, \quad\textrm{dual }\mu^\omega\ge 0
\ea
\eeq
where $Q^\omega$, $R^\omega$ and $s^\omega$ are appropriate 
matrices and vectors of coefficients that represent the complete set of cutting planes at termination.
Suppose the constraints are associated with the dual variables $\theta^\omega$ and $\mu^\omega$.
The LP relaxation should give the same objective as \eqref{opt:second-stage}.
This property is formally stated as follows:
\begin{lemma}\label{lem:subprob-obj=Q}
\qrevision{Suppose Assumptions~\ref{ass:no-error} and \ref{ass:senario-ri} hold,}
and Algorithm~\ref{alg:param-micp} is applied to solve a parametric MICP problem
$\textrm{Sub}(\hat{x},\omega)$ for any feasible first-stage decision $\hat{x}$.
If $\textrm{Sub}(\hat{x},\omega)$ is feasible, then the termination criteria and 
Theorem~\ref{thm:finite-oracle-mixed-int-convex} ensures that the optimal objective of  
\eqref{opt:two-stage-relax-full} must be equal to $\mQ(\hat{x},\xi^\omega)$.
\end{lemma}
We determine a worst-case probability distribution for adding a cut to the first-stage solution $\hat{x}$ by solving the following optimization problem:
\beq\label{opt:worst-case-distr}
\max_{p\in\mP}\;\sum_{\omega\in\Omega}p^{\omega}\mQ(\hat{x},\xi^\omega)
\eeq
{The objective function in \eqref{opt:worst-case-distr} is linear, and we have assumed the existence of an oracle for solving \eqref{opt:worst-case-distr}.}
Finally, we generate an aggregated Benders' cut for the first-stage master problem
based on the optimal values of dual variables associated with the constraints 
\srevision{$W^{\omega} y^{\omega} = q^{\omega} - T^{\omega} \hat{x}$}, $Q^{\omega}y^\omega \ge s^\omega - R^{\omega}\hat{x}$
and the worst-case probability distribution $p$. Specifically, 
% let $\mu^{\omega}$ be the optimal value of the dual variables, 
we can construct the following aggregated Benders' cut from~\eqref{opt:worst-case-distr}:
\beq\label{eqn:agg-bender-cut}
\eta\ge\srevision{-\left(\sum_{\omega\in\Omega}p^{\omega}\mu^{\omega\top}R^\omega + \sum_{\omega\in\Omega}p^{\omega}\theta^{\omega\top}T^\omega\right)x
+\left(\sum_{\omega\in\Omega}p^{\omega}\mu^{\omega\top}s^{\omega}+\sum_{\omega\in\Omega}p^{\omega}\theta^{\omega\top}q^{\omega}\right)},
\eeq 
where $p^\omega$ is the probability of scenario $\omega$ obtained
from solving \eqref{opt:worst-case-distr}. This cut is 
added to the first-stage master problem of \eqref{opt:TSS-MICP}.
\srevision{Let $r^{\omega} = -\left(\mu^{\omega\top}R^\omega + \theta^{\omega\top}T^\omega\right)^{\top}, u^{\omega} = \left(\mu^{\omega\top}s^{\omega}+\theta^{\omega\top}q^{\omega}\right)$.}

\begin{algorithm}
	\caption{A decomposition cutting-plane algorithm for solving \eqref{opt:TSS-MICP}.}
	\label{alg:decomp-BC}
	\begin{algorithmic}[1]
	\State{Initialization: $L\gets -\infty$, $U\gets \infty$, $m\gets 1$.}
	\While{$U-L>0$} \label{lin:U-L>0}
		\State{\qrevision{Apply Algorithm~\ref{alg:micp-oracle} to} solve the first-stage problem \eqref{opt:master} to 
		optimality with main iteration index being $m$. \qrevision{If \eqref{opt:master} is infeasible, then terminate and conclude
		that \eqref{opt:TSS-MICP} is also infeasible.}} \label{lin:solve-master}
		\State{Let $(\eta^m,x^m)$ be the optimal solution.} \label{lin:master-solution}
		\State{Update the lower bound as $L\gets c^\top x^{(m)}+\eta^{(m)}$.}
				\label{lin:low-bd-update}
		\State{Set the current best solution as $x^*\gets x^m$.}
		\For{$\omega\in\Omega$:}
			\State{Apply Algorithm~\ref{alg:param-micp} to solve the second-stage problem $\textrm{Sub}(x^m,\omega)$.} %\label{lin:call-alg1}
			\If{$\textrm{Sub}(x^m,\omega)$ is detected to be infeasible}
				\State{{\textbf{break} and add a feasibility cut \eqref{eqn:feasb-cut} to the first-stage problem \eqref{opt:master}.}}
			\Else
				\State{The $\textrm{Sub}(x^m,\omega)$ has been solved to optimality, and a LP relaxation 
				           \eqref{opt:two-stage-relax-full}}\label{lin:LP-relax}
				\State{with $\hat{x}=x^m$ has been generated by Algorithm~\ref{alg:param-micp}.}
				\State{Obtain the optimal values of the dual variables of \eqref{opt:two-stage-relax-full}}
				\State{and denote them as \srevision{$\lambda^{m\omega}$} and $\mu^{m\omega}$.}
			\EndIf
		\EndFor
		\State{Obtain the worst-case probability distribution 
		$p^m=\Set*{p^{m\omega}}{\omega\in\Omega}$ by solving \eqref{opt:worst-case-distr} with $\hat{x}\gets x^m$.}
		\State{Generate the following aggregated Benders' cut with iteration index $m$:} 
		\State{$\eta\ge \srevision{\sum_{\omega\in\Omega}p^{m\omega} r^{m\omega\top}x
+\sum_{\omega\in\Omega}p^{m\omega}u^{m\omega}}$.}
		\State{Add the Benders' cut to the first-stage problem \eqref{opt:master}.}
		\State{Update the upper bound as $U\gets \textrm{min}\{U,\; c^{\top}x^m+\sum_{\omega\in\Omega}p^{m\omega}\mQ(x^m,\xi^\omega)\}$}.
		\State{$m\gets m+1$.}
	\EndWhile
	\State{Return $x^*$.}
	\end{algorithmic}
\end{algorithm}

\begin{theorem}\label{thm:finite-convg} 
\qrevision{Suppose Assumptions~\ref{ass:no-error} and \ref{ass:senario-ri} hold.}
Algorithm~\ref{alg:decomp-BC} terminates in a finite number of iterations.
Upon termination, Algorithm~\ref{alg:decomp-BC} returns an optimal solution of \eqref{opt:TSS-MICP} if feasible; otherwise, it claims the problem is infeasible.
\end{theorem}
\begin{proof}
The proof is similar to the proof of Theorem~2.1 in \cite{luo2019_DR-TSS-MICP}.
Let the first-stage problem at main iteration $k$ be denoted by Master-$k$.
Lemma~\ref{lem:subprob-obj=Q} and the strong duality of 
\eqref{opt:two-stage-relax-full} imply that 
$\mQ(x^k,\omega)= \srevision{r^{k\omega\top}x^k
+u^{k\omega}}$.
Therefore, we have
\begin{equation}\label{eqn:maxEQ}
G(x^k):=\underset{P\in\mathcal{P}}{\textrm{max}}\;\E_P[\mQ(x^k,\xi)]=
\srevision{\sum_{\omega\in\Omega}p^{k\omega} r^{k\omega\top}x^k
+\sum_{\omega\in\Omega}p^{k\omega}u^{k\omega}}.
\end{equation}
Based on the mechanism of the algorithm, it is clear that if the algorithm terminates in finitely many iterations, it returns an optimal solution. 
We only need to show that the algorithm must terminate in finitely many iterations.
Assume this property does not hold.
Then it must generate an infinite sequence of first-stage solutions $\{x^k\}^{\infty}_{k=1}$. 
We must have ${k_1}$ and ${k_2}$ so that
$x^{k_1}=x^{k_2}$, with $k_1<k_2$. At the end of iteration $k_1$ the upper bound $U^{k_1}$ satisfies
\begin{equation}\label{eqn:U}
U^{k_1}= c^{\top}x^{k_1}+\sum_{\omega\in\Omega}p^{k_1\omega}Q(x^{k_1},\xi^\omega)=
c^{\top}x^{k_1} + G(x^{k_1}),
\end{equation}
where \eqref{eqn:maxEQ} is used to obtain the last equation.
The optimal value of Master-$k_2$ gives a lower bound $L^{k_2}=c^{\top}x^{k_2}+\eta^{k_2}$. 
Since $k_2>k_1$,  Master-$k_2$ has the following constraint:
\begin{equation}\label{eqn:eta}
\eta \ge \srevision{\sum_{\omega\in\Omega}p^{k_1\omega} r^{k_1\omega\top}x
+\sum_{\omega\in\Omega}p^{k_1\omega}u^{k_1\omega}}.
\end{equation}
Therefore, we conclude that
\begin{equation}\label{eqn:Lk2}
\begin{aligned}
L^{k_2}&=c^{\top}x^{k_2}+\eta^{k_2} \\
&\ge c^{\top}x^{k_2} + \srevision{\sum_{\omega\in\Omega}p^{k_1\omega} r^{k_1\omega\top}x^{k_2}
+\sum_{\omega\in\Omega}p^{k_1\omega}u^{k_1\omega}} \\
&=c^{\top}x^{k_1} + \srevision{\sum_{\omega\in\Omega}p^{k_1\omega} r^{k_1\omega\top}x^{k_1}
+\sum_{\omega\in\Omega}p^{k_1\omega}u^{k_1\omega}} \\
&=c^{\top}x^{k_1}+G(x^{k_1})=U^{k_1},
\end{aligned}
\end{equation}
where we use the fact that $x^{k_1}=x^{k_2}$, and  inequalities \eqref{eqn:U}--\eqref{eqn:eta} to obtain \eqref{eqn:Lk2}.
Hence, we have no optimality gap at solution $x^{k_1}$, and the algorithm should have terminated at 
or before iteration $k_2$.  \qed
\end{proof}

\section{A Finite Convergent Linearize-branch-and-union Decomposition Algorithm}\label{sec:decomp-branch-union}
\qrevision{The development in Sections~\ref{sec:alg-micp}-\ref{sec:cut-plane-two-stage-convex-prog}
extend the results of \cite{balas1993-lift-proj-cut-plane-0-1-LP}, \cite{owen2001disjunctive} and \cite{ChenKucukyavuzSen2011}
to establish finite-convergent pure cutting-plane approaches (without branch-and-cut) 
for solving the MICP, JMICP and DR-TSS-MICP to optimality under certain regularity conditions. 
In practical implementation, a trade-off is often made between cut generation and branch-and-bound.}
To enable branching in the implementation, we can make certain modifications on Algorithms~\ref{alg:param-micp} and \ref{alg:decomp-BC}
by leveraging the branch-and-bound technique, 
while still achieving finite convergence for eventually solving~\eqref{opt:micp-xy} and \eqref{opt:TSS-MICP}. 
\newline
\textbf{Modified-Algorithm~\ref{alg:param-micp}}: \newline
\textbf{Step~1}: Solve the master problem \eqref{opt:master-x-y} using
a \textit{branch-and-bound algorithm} to get an optimal solution $y^{(n)}$ 
and optimal objective denoted as $L^{(n)}$.
\textbf{Step~2} and \textbf{Step~3} are exactly the same as Algorithm~\ref{alg:param-micp}.

\qrevision{In the above Modified-Algorithm~\ref{alg:param-micp}, 
we replace the cutting-plane oracle (oracle-PMILP) with a branch-and-cut algorithm. 
This change means that we no longer obtain an equivalent LP at termination 
when problem \eqref{opt:master-x-y} is solved to optimality. 
Instead, branch-and-cut replaces \eqref{opt:master-x-y} 
by a disjunction of LPs corresponding to the leaf nodes of the tree {containing all the feasible solutions to \eqref{opt:master-x-y} regardless of their objective values.}
The disjunctive program can be dualized to generate a valid first-stage inequality
that is active at a given first-stage solution.   
We now outline a procedure for deriving a valid first-stage inequality which will be 
of the form \eqref{eqn:lambda*x-1>0}. This inequality will be aggregated and 
added to the first-stage when solving \eqref{opt:TSS-MICP} {(See Section~\ref{sec:cut-plane-two-stage-convex-prog})}. For simplicity of notation, we assume that no cuts are added when using the branching.
}

\begin{proposition}\label{prop:mod-alg-2}
\qrevision{Suppose Assumptions
\ref{ass:non-empty relative interior} and \ref{ass:no-error} hold.}
The Modified-Algorithm~\ref{alg:param-micp} can solve \eqref{opt:y-micp2}
to optimality or detect infeasibility in a finite number of iterations.
\end{proposition}
\begin{proof}
See Appendix~\ref{sec:proofs}.  \qed
\end{proof}

\begin{observation}\label{lem:node-constraints}
Consider \qrevision{Step 1 in the} Modified-Algorithm~\ref{alg:param-micp}.
The linear program induced by any node $\nu$ in the branch-and-bound tree generated for solving 
\eqref{opt:master-x-y} can be written in the following form:
\beq\label{opt:node-LP}
\ba
&\underset{y}{\min}\;\; h^\top y \\
&\;\mathrm{s.t.}\;\; \srevision{T \hat{x} + W y - q = 0,}\\
&\qquad R_\nu \hat{x} + Q_\nu y + s_\nu \le 0, \\
&\qquad y\in\mbR^{l_2+l_3},
\ea
\eeq
where $R_\nu$, $Q_\nu$ and $s_\nu$ are some coefficient matrices depending on the node $\nu$.
\end{observation}
\begin{comment}
\begin{proof}
First we note that \srevision{equality constraints are independent of any node $\nu$ of the branch-and-bound tree.} Next we note that in the Step 1 of \textbf{Modified-Algorithm~\ref{alg:param-micp}},
the branch-and-bound procedure only adds branching inequalities such as $y_i\le b$ or $y_i\ge b$
for some index $i\in[l_2]$ and integer $b$ to a new node in the branch-and-bound tree. These
branching inequalities are obviously in the desired form of \srevision{second set of} constraints in \eqref{opt:node-LP}.  
All the other inequalities created in Step 2 and Step 3 of \textbf{Modified-Algorithm~\ref{alg:param-micp}} 
are like $a^{(k)\top}\hat{x}+b^{(k)\top}y + c^{(k)} \le 0$ and $C^{(n)}\hat{x}+D^{(n)}y \le d^{(n)}$,
which can also be represented by the form of constraints in \eqref{opt:node-LP}.
\end{proof}
\end{comment}

Consider solving the problem \eqref{opt:micp-xy} in a decomposed manner
similar to the approach developed in Section~\ref{sec:decomp-cut-plane}
but using a different method (branch-and-cut method) for solving the 
parametric mixed-integer linear program. 
For a given input first-stage solution $x=\hat{x}$, 
when \eqref{opt:y-micp2} has been solved to optimality using the 
Modified-Algorithm~\ref{alg:param-micp}, we consider the set $\mtc{L}(\hat{x})$ of leaf nodes
of the branch-and-cut tree generated for solving the master 
problem \eqref{opt:master-x-y} in the last main iteration 
of the Modified-Algorithm~\ref{alg:param-micp}.  
Applying Observation~\ref{lem:node-constraints}, 
we can get the following LP formulation of each node $\nu\in\mtc{L}(\hat{x})$:
\beq\label{opt:pmicp-node-LP}
\min_y\;h^\top y\quad\textrm{ s.t. } \srevision{T \hat{x} + W y - q = 0,}\; R_{\nu}\hat{x} + Q_{\nu}y + s_{\nu}\le 0,\quad y\in\mbR^{l_2+l_3}.
\eeq
Let $y_\nu$ be the optimal solution of \eqref{opt:pmicp-node-LP}.
{Define the polytope $P_\nu$ indexed by leaf node} $\nu$ as follows:
\beq\label{eqn:P_nu}
P_\nu:=\Set*{{(x,y)}}{
 \begin{array}{c}
 \srevision{T x + W {y} = q } \\
 R_{\nu}x + Q_{\nu}{y} + s_{\nu}\le 0 \\
  \bs{0} \le x\le \bs{1}
 \end{array}
}.
\eeq  
%{$P_\nu$ can be interpreted as the feasible set of first-stage variable $x$ 
%given the fixed value $y_\nu$ of second-stage variable $y$.}
Let $\Pi(\hat{x})=\cup_{\nu\in\mtc{L}(\hat{x})} P_\nu$ be the union
of polytopes \eqref{eqn:P_nu} induced by all nodes in $\mtc{L}(\hat{x})$. 
{Note that since each $P_\nu$ lies in the $(x,y)$-space, the set $\Pi(\hat{x})$ does as well.
Because the branch-and-bound tree structure is determined by the fixed value $\hat{x}$,
$\Pi(\hat{x})$ implicitly depends on $\hat{x}$. }
Let $\mathrm{Proj}_x(\conv(\Pi(\hat{x})))$ be  
the projection of $\conv(\Pi(\hat{x}))$ on the $x$-space. 
{The following proposition shows an important property of $\conv(\Pi(\hat{x}))$ that can be utilized to generate 
a valid first-stage inequality for \eqref{opt:micp-xy}. It shows that the the feasible set of \eqref{opt:micp-xy} is contained in the convex hull of the union of nodes determined by the second-stage branch-and-bound tree by $\hat{x}$}. {Consequently, any valid inequality generated for 
the convex hull $\conv(\Pi(\hat{x}))$ is also valid for $\vartheta$. It further implies that any valid inequality generated for  $\mathrm{Proj}_x(\conv(\Pi(\hat{x})))$ 
(the projection of $\conv(\Pi(\hat{x}))$ on the $x$-space)
is valid for $\mathrm{Proj}_x(\vartheta)$.}

{\begin{proposition}\label{prop:S-partition}
For any feasible first-stage solution $\hat{x}$ of \eqref{opt:micp-xy},
the set $\vartheta=\{(x,y):\;T x + W {y} = q, g_i(x,y)\le0\;\forall i\in\mtc{I},\;y\in\mtc{Y}\cap(\mbZ^{l_2}\times\mbR^{l_3})\} \subseteq \Pi(\hat{x})$ and hence in $\conv(\Pi(\hat{x}))$.
\end{proposition}
\begin{proof}
See Appendix~\ref{sec:proofs}.  \qed
\end{proof}}

Using the lift-and-project technique \cite{balas1985}, 
the set {$\mathrm{Proj}_x(\conv(\Pi(\hat{x}))$)}  can be represented as 
\begin{equation}
{\mathrm{Proj}_x(\conv(\Pi(\hat{x})))}=\mathrm{Proj}_{x}
\Set*{
\begin{aligned}
&x, {y}, x_\nu, {y_\nu}, \alpha_\nu \\
&\forall \nu\in\mtc{L}(\hat{x})
\end{aligned}
}
{
 \begin{aligned}
 &\srevision{T x_\nu + W y_\nu \alpha_\nu - q \alpha_\nu = 0}\\
 &R_\nu x_\nu + Q_\nu y_\nu + s_\nu\alpha_\nu \le 0 \\
 &\sum_{\nu\in\mtc{L}(\hat{x})}\alpha_\nu=1,\\
 &x = \sum_{\nu\in\mtc{L}(\hat{x})} x_\nu, \\
 &{y = \sum_{\nu\in\mtc{L}(\hat{x})} y_\nu,} \\
 &\bs{0} \le x_\nu \le \bs{1}\alpha_\nu, \\
 &\alpha_\nu\ge 0 \; \forall \nu\in\mtc{L}(\hat{x})
 \end{aligned}
}.
\end{equation}
We can parameterize all the valid inequalities for $\conv(\Pi(\hat{x}))$ by dualizing
the constraints
\bdm
\ba
&{\lambda^\top_x}\left(x - \sum_{\nu \in \mtc{L}(\hat{x})} x_\nu\right)+{\lambda^\top_y\left(y - \sum_{\nu \in \mtc{L}(\hat{x})} y_\nu\right)} \\
&+\sum_{\nu \in \mtc{L}(\hat{x})} \zeta_\nu^{\top} (T x_\nu + W y_\nu - q \alpha_\nu) -\sum_{\nu \in \mtc{L}(\hat{x})} \mu^\top_\nu \left(R_\nu x_\nu + Q_\nu y_\nu + s_\nu\alpha_\nu\right) \\
& + \sum_{\nu \in \mtc{L}(\hat{x})}\gamma^\top_\nu(\bs{1}\alpha_\nu-x_\nu)
+ \sum_{\nu \in \mtc{L}(\hat{x})}\alpha_\nu -1 \ge 0 \\ 
&\implies \\
&{\lambda^\top_x} x - \sum_{\nu \in \mtc{L}(\hat{x})}({\lambda^\top_x} - \zeta_\nu^\top T + \mu^\top_\nu R_\nu + \gamma^\top_\nu)x_\nu \\
&{\lambda^\top_yy-\sum_{\nu \in \mtc{L}^\omega(\hat{x})}(\lambda^\top_y-\zeta^\top{W}+\mu^\top_\nu Q_\nu)y_\nu} \\
& + \sum_{\nu \in \mtc{L}(\hat{x})}(1 - \zeta_\nu^\top q -\mu^\top_\nu s_\nu+\gamma^\top_\nu\bs{1}) \alpha_\nu -1 \ge 0
\ea
\edm
The above Lagrangian inequality indicates that any inequality of the form 
\beq\label{eqn:lambda*x-1>0}
{\lambda^\top_x} x - 1 \ge 0
\eeq
is valid for {$\mathrm{Proj}_x(\conv(\Pi(\hat{x})))$}  if and only if the coefficients $\lambda$ satisfy the following
constraints:
\beq
\ba
&{\lambda_x} - T^\top \zeta_\nu + R^\top_\nu\mu_\nu  + \gamma_\nu \ge 0, \\
&{\lambda_y=0,\quad -\zeta^\top_\nu{W}+\mu^\top_\nu Q_\nu=0}, \\
&\zeta_\nu^\top q + \mu^\top_\nu s_\nu -1 - \gamma^\top_\nu\bs{1}\ge 0, \\
&{\lambda_x\;\textrm{free},\;\zeta_\nu\;\textrm{free}},\; \mu_\nu,\gamma_\nu\ge 0 \;\forall \nu\in\mtc{L}(\hat{x}).
\ea
\eeq 
In particular, we can choose an appropriate set of coefficients $\lambda$, $\mu_\nu$ 
and $\gamma_\nu$ by solving the following linear program:
\beq%\label{eqn:opt-scen-cut}
\ba
&\min\;\;{\lambda^\top_x} \hat{x}  \\
&\textrm{ s.t. }\;\; {\lambda_x} - T^\top \zeta_\nu + R^\top_\nu\mu_\nu  + \gamma_\nu \ge 0, \\
&\qquad\;\;{-\zeta^\top_\nu{W}+\mu^\top_\nu Q_\nu=0}, \\
&\qquad\;\; \zeta_\nu^\top q + \mu^\top_\nu s_\nu -1 - \gamma^\top_\nu\bs{1}\ge 0, \\
&\qquad\;\; {\lambda_x\;\textrm{free},\;\zeta_\nu\;\textrm{free}},\; \mu_\nu,\gamma_\nu\ge 0 \;\forall \nu\in\mtc{L}(\hat{x}).
\ea
\eeq 

\subsection{Application of the linearize-branch-and-union algorithm to the second-stage scenario problem \eqref{opt:second-stage}}
\begin{observation}\label{cor:linearize-and-branch}
Consider applying Modified-Algorithm~\ref{alg:param-micp} to solve 
the second-stage scenario problem \eqref{opt:second-stage} with
the scenario index $\omega$ and $x=\hat{x}$ to optimality.
Each leaf node $\nu$ of the branch-and-cut tree should have the following form of linear program: 
\beq\label{eqn:lem-nd-LP}
\underset{y}{\min}\; h^{\omega \top} y^\omega\quad
\mathrm{ s.t. }\; \srevision{T^\omega \hat{x} + W^\omega y^\omega - q^\omega = 0,}\; R^\omega_\nu\hat{x}+Q^\omega_\nu y+s^\omega_\nu \le 0,
\; y\in\mbR^{l_2+l_3}.
\eeq
for some coefficient matrix $R^\omega_\nu$, $Q^\omega_\nu$ and $s^\omega_\nu$ 
depending on $\omega$ and $\nu$.
\end{observation}
\begin{remark}
The key feature of \eqref{eqn:lem-nd-LP} is that the linear inequalities 
are parametrized by the first-stage variable value $\hat{x}$, which is the 
foundation of generating a parametric cut for the first-stage problem that
will be discussed later.
\end{remark}
Consider the problem \eqref{opt:TSS-MICP}. When a first-stage solution
$\hat{x}$ is given, we then solve the (second-stage) scenario sub-problem 
$\textrm{Sub}(\hat{x},\omega)$ using the Modified-Algorithm~\ref{alg:param-micp}. 
Applying Observation~\ref{cor:linearize-and-branch} to each scenario sub-problem, 
we can get the following LP formulation of each node $\nu\in\mtc{L}^\omega(\hat{x})$: 
\beq\label{eqn:cor-nd-LP}
\underset{y}{\min}\; h^{\omega \top} y^\omega\quad
\mathrm{ s.t. }\; \srevision{T^\omega \hat{x} + W^\omega y^\omega - q^\omega = 0,}\; R^\omega_\nu\hat{x}+Q^\omega_\nu y+s^\omega_\nu \le 0,
\; y\in\mbR^{l_2+l_3},
\eeq
where $\mtc{L}^\omega(\hat{x})$ is the set of nodes of the branch-and-bound tree of 
the scenario $\omega$ sub-problem $\textrm{Sub}(\hat{x},\omega)$, 
and $\nu$ is the index of a node in $\mtc{L}^\omega(\hat{x})$.

{
Proposition~\ref{prop:S^omega-partition} is an application of Proposition~\ref{prop:S-partition} for a two-stage stochastic program for a specific scenario.
%shows that the set $\mtc{S}^\omega$ ??? which is independent of $\hat{x}$ is contained in the union of polytopes that are defined using inequalities having coefficients dependent on $\hat{x}$. In the statement of this proposition the polytope $P^\omega_\nu(\hat{x})$ defined in the $(x,y^\omega)$-space is indexed by $\hat{x}$ as the coefficient matrices $R^\omega_\nu$, $Q^\omega_\nu$ and $s^\omega_\nu$ are generated based on the first-stage solution $\hat{x}$. 
}

\begin{proposition}\label{prop:S^omega-partition}
Let $\vartheta^\omega=\{(x,y^\omega):\;g_i(x,y^\omega)\le0\;\forall i\in\mtc{I},\;y^\omega\in\mtc{Y}\}$.
For a feasible first-stage solution $\hat{x}$, let 
$P^\omega_\nu(\hat{x})=\left\{(x,y^\omega):R^\omega_\nu x+Q^\omega_\nu y^\omega+s^\omega_\nu\le 0\right\}$
for every $\nu\in\mtc{L}^\omega(\hat{x})$.
Then $\vartheta^\omega\subseteq \cup_{\nu\in\mtc{L}^\omega(\hat{x})}P^\omega_\nu(\hat{x})$. 
\end{proposition}
\begin{proof}
{Follows from Proposition~\ref{prop:S-partition}.  \qed}
\end{proof}

Suppose $x^*$ is an
optimal first-stage solution of \eqref{opt:TSS-MICP} and $y^{\omega*}$ is an optimal
solution of $\textrm{Sub}(x^*,\omega)$. Since $(x^*,y^{\omega*})\in\vartheta^\omega$,
Proposition~\ref{prop:S^omega-partition} indicates that $\exists\nu^*\in\mtc{L}^\omega(\hat{x})$
such that $(x^*,y^{\omega*})\in P^\omega_{\nu^*}(\hat{x})$.
The LP relaxation problem at a node $\nu\in\mtc{L}^\omega(\hat{x})$ is
\beq\label{opt:node-LP-2}
\ba
&\min_{y^\omega_\nu}\; h^{\omega\top}y^\omega_\nu \\
&\;\textrm{s.t. }\; 
\srevision{W^\omega y^\omega_\nu = - T^\omega \hat{x} + q^\omega}, \qquad\;\;\textrm{dual }\theta^\omega_{\nu} \\
&\qquad\; Q^\omega_{\nu}y^\omega_\nu\le  -R^\omega_{\nu}\hat{x} - s^\omega_\nu, \qquad\textrm{dual }\mu^\omega_{\nu} \\ 
&\qquad\; y^\omega_\nu\in\mbR^{l_2+l_3},
\ea 
\eeq
where $\theta^\omega_\nu$ and $\mu^\omega_\nu\ge 0$ are the optimal values of the dual vector obtained after solving the node LP relaxation problem. The optimal objective of \eqref{eqn:lem-nd-LP} can be represented
using the dual vector as $\theta_\nu^{\omega \top}(- T^\omega \hat{x} + q^\omega) + \mu^{\omega\top}_\nu(R^\omega_{\nu}\hat{x} + s^\omega_\nu)$.
Define the polyhedron $V^\omega_\nu(\hat{x})$ as follows:
\beq\label{eqn:P^omega_nu}
V^\omega_\nu(\hat{x}):=\Set*{(x,\eta^\omega)}{
 \begin{array}{c}
  \eta^\omega\ge\srevision{\left(\mu^{\omega\top}_\nu R^\omega_{\nu} - \theta_\nu^{\omega \top} T^\omega \right) x + \left(\mu^{\omega\top}_\nu s^\omega_\nu  + \theta_\nu^{\omega \top} q^\omega\right)}\\
  Ax\le b \\
  \bs{0} \le x\le \bs{1}
 \end{array}
}.
\eeq 
\qrevision{
\begin{proposition}\label{obs:[x,eta]inV}
Let $x^*$ be an optimal first-stage solution of \eqref{opt:TSS-MICP} and let $\eta^{\omega*}$
represent the optimal objective of $\mathrm{Sub}(x^*,\omega)$.
Then we must have $(x^*,\eta^{\omega*})\in V^\omega_{\nu^*}(\hat{x})$.
\end{proposition}
\begin{proof}
See Appendix~\ref{sec:proofs}.  \qed
\end{proof}
}
Let $\Pi^\omega(\hat{x})=\cup_{\nu\in\mtc{L}^\omega(\hat{x})} V^\omega_\nu(\hat{x})$.
\qrevision{Proposition~\ref{obs:[x,eta]inV} implies that $(x^*,\eta^{\omega*})\in\Pi^\omega(\hat{x})$,
and hence $(x^*,\eta^{\omega*})\in\conv(\Pi^\omega(\hat{x}))$.}
The convex hull $\conv(\Pi^\omega(\hat{x}))$ can be represented following
\cite{balas1993-lift-proj-cut-plane-0-1-LP,balas1998} as 
\begin{equation}
\conv(\Pi^\omega(\hat{x}))=\mathrm{Proj}_{x,\eta^\omega}
\Set*{
\begin{aligned}
&\eta^\omega, x, \\
&\eta^\omega_\nu, x_\nu, \alpha_\nu \\
&\forall \nu\in\mtc{L}^\omega(\hat{x})
\end{aligned}
}
{
 \begin{aligned}
 &\eta^\omega = \sum_{\nu\in\mtc{L}^\omega(\hat{x})} \eta^\omega_\nu \\
 &x = \sum_{\nu\in\mtc{L}^\omega(\hat{x})} x_\nu \\
 &\eta^\omega_\nu\ge\srevision{\left(\mu^{\omega\top}_\nu R^\omega_{\nu} - \theta_\nu^{\omega \top} T^\omega \right) x_\nu }\\
 & \;\;\srevision{+ \left(\mu^{\omega\top}_\nu s^\omega_\nu  + \theta_\nu^{\omega \top} q^\omega\right) \alpha_\nu}\\
 &Ax_\nu \le b\alpha_\nu,\;\bs{0} \le x_\nu \le \bs{1}\alpha_\nu \\
 &\sum_{\nu\in\mtc{L}^\omega(\hat{x})}\alpha_\nu=1,\;\alpha_\nu\ge 0 \\
 &\qquad\forall \nu\in\mtc{L}^\omega(\hat{x})
 \end{aligned}
}
\end{equation}
\begin{observation}\label{obs:(x,eta)_in_conv}
\qrevision{Let $\hat{x}$ and $\widetilde{x}$ be any two feasible first-stage solutions of \eqref{opt:TSS-MICP}, 
and $\widetilde{\eta}^{\omega}$ be the optimal objective of $\textrm{Sub}(\widetilde{x},\omega)$. 
Then $(\widetilde{x},\widetilde{\eta}^{\omega})\in\conv(\Pi^\omega(\hat{x}))$.}
\end{observation}
We can parameterize all the valid inequalities for $\conv(\Pi^\omega)$ by dualizing
the constraints in the formulation of $\conv(\Pi^\omega)$ using the dual variables
$\lambda,\beta_\nu,\gamma_\nu,\sigma$ as follows:
\bdm
\ba
&\eta^\omega - \sum_{\nu \in \mtc{L}^\omega(\hat{x})} \eta^\omega_\nu + \lambda^\top\left(\sum_{\nu \in \mtc{L}^\omega(\hat{x})} x_\nu - x\right)
+\sum_{\nu \in \mtc{L}^\omega(\hat{x})} \big(\eta^\omega_\nu - \mu^{\omega\top}_\nu R^\omega_\nu x_\nu + \theta_\nu^{\omega \top} T^\omega x_\nu - \mu^{\omega\top}_\nu s^\omega_\nu \alpha_\nu \\
&  \srevision{- \theta_\nu^{\omega \top} q^\omega \alpha_\nu} \big) + \sum_{\nu \in \mtc{L}^\omega(\hat{x})}\beta^\top_\nu(b\alpha_\nu-Ax_\nu) + \sum_{\nu \in \mtc{L}^\omega(\hat{x})}\gamma^\top_\nu(\bs{1}\alpha_\nu-x_\nu)
+\sigma\left(\sum_{\nu \in \mtc{L}^\omega(\hat{x})}\alpha_\nu -1 \right) \ge 0 \\ 
&\implies \\
&\eta^\omega -\lambda x + \sum_{\nu \in \mtc{L}^\omega(\hat{x})}(\lambda^\top-\mu^{\omega\top}_\nu R^\omega_\nu + \theta_\nu^{\omega \top} T^\omega - \beta^\top_\nu A-\gamma^\top_\nu)x_\nu \\
&\quad+ \sum_{\nu \in \mtc{L}^\omega(\hat{x})}(\sigma-\mu^{\omega\top}_\nu s^\omega_\nu - \theta_\nu^{\omega \top} q^\omega +\beta^\top_\nu b+\gamma^\top_\nu\bs{1}) \alpha_\nu - \sigma \ge 0.
\ea
\edm
The above Lagrangian inequality indicates that any inequality of the form 
\beq
\eta^\omega\ge\lambda^\top x+\sigma
\eeq
is valid for $\conv(\Pi^\omega)$ if and only if the dual variables satisfy the following
constraints:
\beq\label{eqn:constr-dual-var}
\ba
&\lambda-R^{\omega\top}_\nu\mu^{\omega}_\nu \srevision{+ \theta_\nu^{\omega \top} T^\omega} - A^\top\beta_\nu-\gamma_\nu \le 0, \\
&\sigma-\mu^{\omega\top}_\nu s^\omega_\nu \srevision{- \theta_\nu^{\omega \top} q^\omega} +\beta^\top_\nu b+\gamma^\top_\nu\bs{1}\le 0, \\
&\lambda\in\mathbb{R}^{l_1},\;\sigma\in\mathbb{R},\; \beta_\nu,\gamma_\nu\ge 0 \,\;\forall \nu\in\mtc{L}^\omega(\hat{x}).
\ea
\eeq 
In particular, we can choose an appropriate set of dual variables by solving the following
linear program:
\beq\label{eqn:opt-scen-cut}
\ba
&\max\quad \lambda^\top \hat{x} + \sigma \\
&\textrm{ s.t. }\quad \lambda-R^{\omega\top}_\nu\mu^{\omega}_\nu \srevision{+ \theta_\nu^{\omega \top} T^\omega} - A^\top\beta_\nu-\gamma_\nu \le 0, \\
&\qquad\quad \sigma-\mu^{\omega\top}_\nu s^\omega_\nu \srevision{- \theta_\nu^{\omega \top} q^\omega} +\beta^\top_\nu b+\gamma^\top_\nu\bs{1}\le 0, \\
&\qquad\quad \lambda\in\mathbb{R}^{l_1},\;\sigma\in\mathbb{R},\; \beta_\nu,\gamma_\nu\ge 0 \,\;\forall \nu\in\mtc{L}^\omega(\hat{x}).
\ea
\eeq 
The following observation summarizes this property.
\qrevision{
\begin{observation}\label{obs:eta>lambda*x+sigma}
The inequality $\eta^\omega\ge\lambda^\top{x}+\sigma$ is valid for $\conv(\Pi^\omega)$ if the dual variables satisfy \eqref{eqn:constr-dual-var}.
\end{observation}
}
\begin{lemma}\label{lem:scen-first-stage-valid-ineq}
Let $\hat{x}$ be any feasible first-stage solution of \eqref{opt:TSS-MICP} and $\omega\in\Omega$
be a scenario. Suppose the second-stage problem $\mathrm{Sub}(\hat{x},\omega)$, i.e., 
the problem \eqref{opt:second-stage} with $x=\hat{x}$ is feasible and has been solved to optimality using 
a linearize-and-branch algorithm, and $\mtc{L}^\omega(\hat{x})$ be the set of leaf nodes of the branch-and-bound
tree available at termination of the linearize-and-branch algorithm. Then we have 
\beq\label{eqn:scen-first-stage-valid-ineq}
\ba
&\mQ(\hat{x},\xi^\omega)=\hat{\lambda}^{\omega\top}\hat{x}+\hat{\sigma}^{\omega}, \\
&\mQ(x,\xi^\omega)\ge\hat{\lambda}^{\omega\top}x+\hat{\sigma}^{\omega},
\ea
\eeq
for all feasible first-stage solutions $x$,
where the coefficients $\hat{\lambda}^{\omega}$ and $\hat{\sigma}^{\omega}$
are the optimal solution of \eqref{eqn:opt-scen-cut}.
\end{lemma}
\begin{proof}
By Observation~\ref{obs:eta>lambda*x+sigma}, 
the inequality $\eta\ge\hat{\lambda}^{\omega\top}x+\hat{\sigma}^{\omega}$ is valid for 
all points in $\conv(\Pi^\omega)$ and by Observation~\ref{obs:(x,eta)_in_conv}, 
for any feasible first-stage solution $x$, the point $(x,\eta^\omega)\in\conv(\Pi^\omega)$,
where $\eta^\omega=\mQ(x,\xi^\omega)$ is the optimal objective of $\mathrm{Sub}(x,\omega)$.
This shows that $\mQ(x,\xi^\omega)\ge\hat{\lambda}^{\omega\top}x+\hat{\sigma}^{\omega}$ holds
for any feasible first-stage solution $x$.

It remains to show that the first equation in \eqref{eqn:scen-first-stage-valid-ineq} holds. 
We claim that $(\hat{x},\hat{\eta}^\omega)$ is an extremal point of $\conv(\Pi^\omega)$, 
where $\hat{\eta}^\omega=\mQ(\hat{x},\omega)$.
To prove the claim, we observe that 
$\mathrm{Proj}_x\left(V^\omega_{\nu}(\hat{x})\right)=\{x:\;Ax\le{b},\;{\textbf 0}\le{x}\le{\textbf 1}\}$
for all $\nu\in\mtc{L}^\omega(\hat{x})$, and hence
$\mathrm{Proj}_x\left(\conv(\Pi^\omega)\right)=\{x:\;Ax\le{b},\;{\textbf 0}\le{x}\le{\textbf 1}\}$.
Furthermore, since $\hat{x}$ is a feasible first-stage solution which is also a binary vector, 
it must be an extremal point of the set $\{x:\;Ax\le{b},\;{\textbf 0}\le{x}\le{\textbf 1}\}$.
Let $\nu^\prime\in\mtc{L}^\omega(\hat{x})$ be the node that contains the optimal solution
$\hat{y}$ of the problem $\textrm{Sub}(\hat{x},\omega)$. 
It follows that the point $(\hat{x},\hat{\eta}^\omega)$ is an extremal point of the set 
$V^\omega_{\nu^\prime}(\hat{x})$ and hence an extremal point of the set $\conv(\Pi^\omega)$.
It is also clear that $(\hat{x},\hat{\eta}^\omega)$ is the only extremal point of $\conv(\Pi^\omega)$
with $x=\hat{x}$. We can also see that
\bdm
\hat{\eta}^\omega = \min \{\eta: (\hat{x},\eta)\in\conv(\Pi^\omega)\},
\edm
and the valid inequality $\eta\ge\hat{\lambda}^{\omega\top}x+\hat{\sigma}^{\omega}$
passes from the extremal point $(\hat{x},\hat{\eta}^\omega)$ due to the use of \eqref{eqn:opt-scen-cut}
for generating this inequality. Therefore, we have 
$\hat{\eta}^\omega=\hat{\lambda}^{\omega\top}\hat{x}+\hat{\sigma}^{\omega}$.  \qed
\end{proof}

Lemma~\ref{lem:scen-first-stage-valid-ineq} indicates that we can aggregate
the scenario valid inequality $\eta^\omega\ge\lambda^{\omega*\top}x+\sigma^{\omega*}$
with the weight given by the worst-case probability distribution to generate a valid inequality 
to add to the first-stage master problem. With this property,
we can then establish a decomposition linearize-branch-and-union based algorithm (Algorithm~\ref{alg:decomp-BU}) 
for solving \eqref{opt:TSS-MICP}. The idea is similar to Algorithm~\ref{alg:decomp-BC}.
The difference is in the approach to solving the scenario sub-problems to generate
a valid scenario cut. In Algorithm~\ref{alg:decomp-BC}, the scenario sub-problem is solved using a pure parametric cutting-plane algorithm, and the scenario cut is generated by taking
the dual of the eventual LP relaxation. While in Algorithm~\ref{alg:decomp-BU}, it is 
solved using the linearize-and-branch algorithm, and the scenario cut is generated 
using a polytope-union technique. Together we call it the decomposition algorithm with linearize-branch-and-union technique. 

In particular, the master problem at iteration $m$ of Algorithm~\ref{alg:decomp-BU}  
has the following formulation:
\beq\label{opt:master-BU}
\ba
&\min_{x}\;c^{\top}x + \eta \\
&\;\textrm{ s.t. } Ax\le b, \\
&\qquad\; \eta\ge\sum_{\omega\in\Omega}p^{k\omega}\lambda^{k\omega*\top} x
+\sum_{\omega\in\Omega}p^{k\omega}\sigma^{k\omega*} \quad\forall k\in\mtc{R}^{m-1}, \\
&\qquad\; v^{k\top}x+t^k\ge0 \qquad \qquad \qquad \qquad \qquad \,{ \forall k\in\mtc{A}^{m-1}},\\
&\qquad\; x\in\mC\cap\{0,1\}^{l_1},
\ea
\eeq
where $\eta\ge\sum_{\omega\in\Omega}p^{k\omega}\lambda^{k\omega*\top} x
+\sum_{\omega\in\Omega}p^{k\omega}\sigma^{k\omega*}$ is the (aggregated)
first-stage valid inequality (Benders' cut) generated at the end of iteration $k$ and 
$p^k=\Set*{p^{k\omega}}{\omega\in\Omega}$ is an optimal solution of
\eqref{opt:worst-case-distr} with $\hat{x}\gets x^k$, and $x^k$ is an
optimal solution of the master problem \eqref{opt:master-BU} at the beginning 
of iteration $k$. \qrevision{Other notations in \eqref{opt:master-BU} are defined the same as \eqref{opt:master}.}

\begin{theorem}
\qrevision{Suppose Assumptions~\ref{ass:no-error} and \ref{ass:senario-ri} hold.}
If the master problem and scenario sub-problems are solved to optimality \qrevision{or detected infeasible} 
in every iteration of Algorithm~\ref{alg:decomp-BU}, 
the algorithm {returns} an optimal solution of \eqref{opt:TSS-MICP}
\qrevision{or detect its infeasibility} in a finite number of iterations. 
\end{theorem}
\begin{proof}
The proof is similar to the proof of Theorem 4.1 in \cite{luo2019_DR-TSS-MICP}.  \qed
\end{proof}

\begin{algorithm}
	\caption{A decomposition linearize-branch-and-union based algorithm for solving \eqref{opt:TSS-MICP}.}
	\label{alg:decomp-BU}
	\begin{algorithmic}[1]
	\State{Initialization: $L\gets -\infty$, $U\gets \infty$, $m\gets 1$.}
	\While{$U-L>0$} %\label{lin:U-L>0}
		\State{Solve the first-stage master problem \eqref{opt:master-BU} to 
		optimality with main iteration index being $m$. \qrevision{If \eqref{opt:master-BU} is infeasible,
		then stop and claim that \eqref{opt:TSS-MICP} is infeasible.}} %\label{lin:solve-master}
		\State{Let $(\eta^m,x^m)$ be the optimal solution.} %\label{lin:master-solution}
		\State{Update the lower bound as $L\gets c^\top x^{m}+\eta^{m}$.}
				%\label{lin:low-bd-update}
		\State{Set the current best solution as $x^*\gets x^m$.}
		\For{$\omega\in\Omega$:}
			\State{Use \textbf{Modified-Algorithm~\ref{alg:param-micp}} to solve the second-stage problem $\textrm{Sub}(x^m,\omega)$.} 
            %\label{lin:call-alg1}
            {
            \If{$\textrm{Sub}(x^m,\omega)$ is detected to be infeasible}
				\State{\textbf{break} and add a feasibility cut \eqref{eqn:feasb-cut} to the first-stage master problem \eqref{opt:master-BU}.}
            \EndIf}
			\State{When the Modified-Algorithm~\ref{alg:param-micp} terminates with an optimal solution, it generates a 
			branch-and-bound tree with the set $\mtc{L}^\omega(x^m)$ of nodes. } %\label{lin:LP-relax}
			\State{Solve \eqref{eqn:opt-scen-cut} with $\hat{x}\gets x^m$ to determine the coefficients 
			$\lambda^{m\omega*}$ and $\sigma^{m\omega*}$.}
		\EndFor
		\State{Obtain the worst-case probability distribution 
		$p^m=\Set*{p^{m\omega}}{\omega\in\Omega}$ by solving \eqref{opt:worst-case-distr} with $\hat{x}\gets x^m$.}
		\State{Generate the following aggregated Benders' cut with iteration index $m$:
        $\eta\ge\sum_{\omega\in\Omega}p^{k\omega}\lambda^{k\omega*\top} x
+\sum_{\omega\in\Omega}p^{k\omega}\sigma^{k\omega*}$.}
		\State{Add the Benders' cut to the first-stage master problem \eqref{opt:master-BU}.}
		\State{Update the upper bound as $U\gets \textrm{min}\{U,\; c^{\top}x^m+\sum_{\omega\in\Omega}p^{m\omega}\mQ(x^m,\xi^\omega)\}$}.
		\State{$m\gets m+1$.}
	\EndWhile
	\State{Return $x^*$.}
	\end{algorithmic}
\end{algorithm}

\section{Numerical Experimentation}
\label{sec:num_exp}
To evaluate the practical performance of our proposed algorithm, we apply it to a process-integrated chemical supply chain planning problem, adapted from \cite{li2019finite, li2018improved, norton1994strategic}. The main adaptation involves using the Cobb-Douglas production function \cite{cobb1928theory, dey2024algorithm} for processes exhibiting a nonlinear input-output relationship between the main product and the associated chemical ingredients. We also consider the plant's capacity variables as discrete so that all first-stage variables are binary. For completeness, the adapted model details are presented in Appendix~\ref{sec:Num-Details}. The complete lists of notations for indices, sets, parameters, and decision variables are given in  Tables~\ref{tab:index-set}, \ref{tab:model-parameter}, \ref{tab:model-decision-var}, and \ref{tab:dimension-parameter}, respectively. These are followed by the mathematical model in Appendix Section~\ref{sec:chem-model}. Figures~\ref{fig:ChemSC-Network} and ~\ref{fig:ChemSC} respectively present the general network connections of suppliers and customers with a plant, and the process flow schematic (applicable for all plants). In Appendix Section~\ref{sec:implementation details}, we provide further details used in our experimental setup.

\subsection{Results and discussion}  
We first consider a two-stage stochastic setting and solve the problem for five different scenarios both using our method and commercial solver Gurobi (v11.0.0). All the computations are performed on a server having Xeon 2.20 GHz CPUs.  For each instance, scenario problems are solved in parallel, allocating one core per scenario, with a maximum of 50 cores. Our implementation uses Python 3.9.  The `multiprocessing' package is used for assigning a scenario problem to a CPU core and thus solving the scenario problems in parallel. The Gurobi solver (`gurobipy' package) is used for solving mixed-integer linear programming master problems, the LP sub-problems arising in the {branch-and-bound (B$\&$B)} tree, and cut-generation-linear-programs {(CGLPs)} under each scenario problem. 

In the master problem, we use cover inequalities to cut out an infeasible solution.  The Modified-Algorithm 2 outlined in Section~\ref{sec:decomp-branch-union} is used for scenario problem solving. We begin by iterating between step 1 and step 2 of the Modified-Algorithm 2 for a certain number of iterations before proceeding to step 3. We observe that for the concerned problem, consecutive iterations between the first two steps refine the approximation. In our implementation of the linearize-branch-and-union algorithm, we use a custom data structure combining a binary search tree, recursion, and a heap queue to accelerate tree construction. We set 1\% optimality {gap} and a 12-hour time limit as the termination criteria. Lower bound, upper bound, and the corresponding optimality gap at termination are reported in Table~\ref{tab:comparison-TSS}. Additionally, total number of feasibility and optimality cuts added throughout master iterations are reported. The results demonstrate the superiority of our proposed decomposition approach. Even for the two-scenario problem, our approach achieves a better optimality gap upon termination. Also, our method is scaling better with a large scenario size.

\begin{table}[h]
\centering
\caption{Comparison of results for two-stage stochastic setting with 12-hour time limit.}
\setlength{\tabcolsep}{4pt}
\resizebox{\textwidth}{!}{
\begin{tabular}{@{}r r r r r r r@{}} % <-- 7 cols, all right-aligned
\toprule
\multirow{2}{*}{\makecell[r]{Scenario\\Size}} 
& \multicolumn{4}{c}{Our Method} 
& \multicolumn{2}{c}{Gurobi} \\
\cmidrule(lr){2-5} \cmidrule(lr){6-7}
& \makecell[r]{Optimality\\Gap \%\\(Lower Bound)}
& \makecell[r]{Best Obj\\Value}
& \makecell[r]{No. of\\Feasibility\\Cuts}
& \makecell[r]{No. of\\Optimality\\Cuts}
& \makecell[r]{Optimality\\Gap \%\\(Lower Bound)}
& \makecell[r]{Best Obj\\Value} \\
\midrule
2   & 3.86\% (754.45) & 784.82 & 1753 & 165 & 18.6\% (642.41) & 790.44 \\
5   & 4.08\% (547.01) & 570.30  & 627  & 176 & 17.5\% (479.25) & 581.66 \\
10  & 2.37\% (535.36) & 548.32  & 155  & 114 & 19.8\% (440.66) & 550.11 \\
50  & 4.37\% (419.12) & 438.28  & 622  & 37  & 17.8\% (360.27) & 438.28 \\
100 & 3.98\% (413.75) & 430.94  & 284  & 58  & 27.7\% (345.96) & 478.36 \\
\bottomrule
\end{tabular}}
\label{tab:comparison-TSS}
\end{table}

Next, we consider the distributionally robust counterpart of the process-integrated chemical supply chain model. A Wasserstein-distance based ambiguity set $\mathcal{P}^{W}$ with finite support as defined in~\eqref{eq:wasserstein} is used in the distributional robustness formulation. In~\eqref{eq:wasserstein}, $p^0$ is a reference probability distribution and $q$ is a joint distribution over the finite support, $\rho$ is a given radius, and $\xi_i$ represents a realization of the random vector $\xi$ for scenario $i \in \Omega$. 
\begin{align}\label{eq:wasserstein}
&\mathcal{P}^{W} = \left\{ 
         p \in \mathbb{R}^{|\Omega|}_+
        \ \middle\vert 
        \begin{array}{l}
            \exists \, q \in \mathbb{R}^{|\Omega| \times |\Omega|}_+ \, :\\
            \sum_{i = 1}^{|\Omega|} \sum_{j = 1}^{|\Omega|} \|\xi_i - \xi_j\| q_{i j} \leq \rho\\
            \sum_{i = 1}^{|\Omega|} q_{i j} = p^0_j \qquad \;\; \forall j \in \Omega\\
            \sum_{j = 1}^{|\Omega|} q_{i j} = p_i \quad \forall i \in \Omega\\
            \sum_{i = 1}^\Omega p_i = 1\\
             p_i \geq 0 \quad \qquad \qquad \quad \, \forall i \in {\Omega}
        \end{array}
        \tag{Wass}
    \right\},
\end{align}

Since a commercial solver cannot solve the distributional robust model directly, we report the results obtained using our method in Table~\ref{tab:DRO-result} for two different radii: $\rho =0.05$ and $\rho =0.1$. As shown in Table~\ref{tab:DRO-result}, for these radii, the time taken by the distributionally robust model is approximately similar to that for the two-stage stochastic setting in closing a certain optimality gap. As expected, the optimal objective value obtained increases with the radius $\rho$, and is minimum for $\rho =0$, i.e., for the two-stage stochastic setting. 

\begin{table}[h]
\centering
\caption{Results for distributional robust setting using our method with two different Wasserstein distance tolerances $\rho$ with a 12-hour time limit.}
\setlength{\tabcolsep}{4pt}
\resizebox{\textwidth}{!}{
\begin{tabular}{@{}r r r r r r r r r@{}} % <-- 9 cols, all right-aligned
\toprule
\multirow{2}{*}{\makecell[r]{Scenario\\Size}} 
& \multicolumn{4}{c}{$\rho = 0.05$} 
& \multicolumn{4}{c}{$\rho = 0.1$} \\
\cmidrule(lr){2-5} \cmidrule(lr){6-9}
& \makecell[r]{Optimality\\Gap \%}
& \makecell[r]{Best Obj\\Value}
& \makecell[r]{No. of\\Feasibility\\Cuts}
& \makecell[r]{No. of\\Optimality\\Cuts}
& \makecell[r]{Optimality\\Gap \%}
& \makecell[r]{Best Obj\\Value}
& \makecell[r]{No. of\\Feasibility\\Cuts}
& \makecell[r]{No. of\\Optimality\\Cuts} \\
\midrule
2   & 8.10\% & 785.77 & 3275 &  74 & 4.49\% & 786.73 & 1410 & 141 \\
5   & 5.72\% & 571.30 &  691 & 162 & 4.07\% & 572.29 &  428 & 145 \\
10  & 1.71\% & 549.19 &  158 & 115 & 1.72\% & 549.98 &  121 & 213 \\
50  & 4.34\% & 439.93 &  470 &  41 & 3.63\% & 441.58 &  445 &  48 \\
100 & 6.81\% & 434.20 &  208 &  53 & 6.63\% & 436.89 &  199 &  53 \\
\bottomrule
\end{tabular}}
\label{tab:DRO-result}
\end{table}

\section{Concluding Remarks}
We have developed a pure cutting-plane algorithm for solving a general mixed-integer convex program to 
exact optimality in a finite number of iterations. This algorithm has been utilized as an oracle for developing
similar algorithms in solving more advanced mixed-integer convex problems with a two-stage decision process.
The performance of our algorithm is evaluated on a practical problem under both the two-stage stochastic and distributionally robust settings. The results demonstrate that, with efficient implementation, our approach is viable for real-world problems where commercial solvers struggle, particularly for large-scale instances with tens of scenarios.

Although in the numerical example, we obtain an exact solution without invoking the polishing step, it may not be the case for all problems. Convex functions for which the polishing step is not needed {\em to prove finite convergence} can be a topic of future research.

\bibliographystyle{spmpsci_unsrt}
\bibliography{reference-database-08-06-2019}

\appendix
\normalsize
\section{Omitted Proofs}\label{sec:proofs}
\subsubsection*{Proof of Proposition~\ref{prop:lin-approx-convex-prog}}
\qrevision{
First, the property of $\alpha$ ensures that \eqref{opt:conv-linear-opt}
is a relaxation of \eqref{opt:convex-opt}.
Let $S$ be the feasible set of \eqref{opt:convex-opt}. 
Since $x^*$ is an optimal solution of \eqref{opt:convex-opt},
the optimality condition indicates that $-c\in{\cal N}_S(x^*)$.
Given that $\ri(S)$ is non-empty, the normal cone has the Minkowski decomposition 
$\cN_S(x^*)=\cN_{\bar{A}}(x^*)+\cN_{\cal B}(x^*)$, where $\cN_{\bar{A}}(x^*)$
is the normal cone of $\{x:\bar{A}x=b\}$ at $x^*$. Since by Assumption~\ref{ass:no-error},
the normal cones $\cN_{\bar{A}}(x^*)$ and $\cN_{\cal B}(x^*)$ can be generated, it is possible to compute a $r\in\cN_{\bar{A}}(x^*)$ and 
a $\alpha\in\cN_{\cal B}(x^*)$ such that $-c=r+\alpha$. It is clear that $\alpha$
is a vector in $\cN_{\{x:\;\alpha^\top(x-x^*)\le0\}}(x^*)$, and hence
$-c\in\cN_{\bar{A}}(x^*)+ {\cN_{\{x:\;\alpha^\top(x-x^*)\le0\}}(x^*)}$.
This is the optimality condition of \eqref{opt:conv-linear-opt}, 
which indicates that $x^*$ is an optimal solution of \eqref{opt:conv-linear-opt}.
Therefore, $x^*$ is an optimal solution of \eqref{opt:conv-linear-opt} which 
gives the optimal objective $c^\top{x^*}$.  \qed
}

\subsubsection*{Proof of Proposition~\ref{prop:int-seq-convg}}
{We use the fact that the limit of a convergent sequence in a closed set necessarily belongs to that set.
By realizing that the set $\mathbb{Z}^{l_1}\times\mathbb{R}^{l_2}$ is closed, we can claim that 
the limit point $x^*\in \mathbb{Z}^{l_1}\times\mathbb{R}^{l_2}$. Since the $\{x^{(n)}\}$ converges to $x^*$, 
the first $l_1$ integer components of $x^{(n)}$ are identical to that of $x^*$ for sufficiently large $n$
due to the discrete natural of $\mathbb{Z}^{l_1}$.  \qed
}

\subsubsection*{Proof of Proposition~\ref{prop:mod-alg-2}}
\qrevision{Modified-Algorithm 2 replaces the \textbf{oracle-PMILP} with a branch-and-bound step 
to solve the parametric MILP \eqref{opt:master-x-y}. }
Outer-approximation cuts generated in Step 2 and Step 3 only depend on the optimal solution $y^{(n)}$
of \eqref{opt:master-x-y}, which is independent of how the master problem is solved. 
Therefore, the proof of finite convergence is exactly the same as the proof
of Theorem~\ref{thm:convg-gen-param-cut-plane}.  \qed

\subsubsection*{Proof of Proposition~\ref{prop:S-partition}}
Based on the proof of Observation~\ref{lem:node-constraints},
the inequalities {$R_\nu x+Q_\nu y+s_\nu\le 0$}
can be partitioned into two groups of inequalities. Group 1: the valid inequalities
of the form $a^{(k)\top}x+b^{(k)\top}y + c^{(k)} \le 0$
and $D^{(k)}_xx +D^{(k)}_yy \le d^{(k)}$
in the {$(x,y)$}-space added respectively at Line~\ref{lin:add_ax+by+c<0} and Line~\ref{lin:Dx+Dy}
of Modified-Algorithm~\ref{alg:param-micp};
Group 2: the simple-bound inequalities of the form 
 $y_i\le b$ and $y_i\ge b$ in the {$y$}-space used to define each node.
{Define the following sets induced by Group 1 and Group 2, respectively:
\begin{displaymath}
\begin{aligned} 
&P^1=\{(x,y):\; a^{(k)\top}x+b^{(k)\top}y + c^{(k)} \le 0, \;D^{(k)}_xx +D^{(k)}_yy \le d^{(k)},\;\forall k\in {\cal I}\}, \\
&P^2_\nu=\{(x,y):\;x\textrm{ free},\; y\in\textrm{Bound}_\nu\},
\end{aligned}
\end{displaymath}
where $\cal I$ is the set of iteration indices of Modified-Algorithm~\ref{alg:param-micp} at which the Group 1 inequalities have been added,
and $\textrm{Bound}_\nu$ is the region defined by the simple-bound inequalities in the $y$-space for the node $\nu$.}
Theorem~\ref{thm:convg-gen-param-cut-plane} indicates that inequalities in
Group 1 are valid for {$\vartheta$}.
Therefore, we have  
\bdm
\vartheta\subseteq P_1,\quad{\{(x,y):\;x\textrm{ free, } y\in\mtc{Y}\cap(\mbZ^{l_2}\times\mbR^{l_3})\}}\subseteq\cup_{\nu\in\mtc{L}(\hat{x})} P^2_\nu.
\edm 
It then implies that
\bdm
\vartheta\subseteq \cup_{\nu\in\mtc{L}(\hat{x})}(P^1\cap P^2_\nu)
=\cup_{\nu\in\mtc{L}(\hat{x})} P_\nu(\hat{x}).
\edm
This concludes the proof.  \qed

\subsubsection*{Proof of Proposition~\ref{obs:[x,eta]inV}}
Following \eqref{opt:node-LP-2}, the LP relaxation of the leaf node $v^*\in\mtc{L}^\omega(\hat{x})$ can be written as 
\beq
\ba
&\min_y\; h^{\omega\top}y \\
&\;\textrm{s.t. }\;W^\omega y = - T^\omega \hat{x} + q^\omega,  \\
&\qquad\; Q^\omega_{\nu^*}y \le  -R^\omega_{\nu^*}\hat{x} - s^\omega_{\nu^*}, \\
&\qquad\; y\in\mbR^{l_2+l_3}.
\ea 
\eeq
We claim that the optimal solution $y^{\omega*}$ of $\mathrm{Sub}(x^*,\omega)$ is a feasible solution to the following linear program:
\beq\label{opt:Sub(x*,w)_node}
\ba
&\min_{y}\; h^{\omega\top}y \\
&\;\textrm{s.t. }\;W^\omega y = - T^\omega x^* + q^\omega,  \\
&\qquad\; Q^\omega_{\nu^*}y \le  -R^\omega_{\nu^*}x^* - s^\omega_{\nu^*}, \\
&\qquad\; y\in\mbR^{l_2+l_3}.
\ea 
\eeq
Given that this claim holds, we can apply the dual values obtained from \eqref{opt:node-LP-2}
to dualize \eqref{opt:Sub(x*,w)_node} which leads to the following inequalities:
\bdm
\eta^{\omega*}=h^{\omega\top}y^{\omega*}\ge\textrm{obj}\eqref{opt:Sub(x*,w)_node} 
\ge\left(\mu^{\omega\top}_{\nu^*} R^\omega_{\nu^*} - \theta_{\nu^*}^{\omega \top} T^\omega \right) x^* 
+ \left(\mu^{\omega\top}_{\nu^*} s^\omega_{\nu^*}  + \theta_{\nu^*}^{\omega \top} q^\omega\right).
\edm
This shows $(x^*,\eta^{\omega*})\in V^\omega_{\nu^*}(\hat{x})$.

Now it remains to prove the claim. First, we note that the constraints 
$Q^\omega_{\nu^*}y \le  -R^\omega_{\nu^*}x^* - s^\omega_{\nu^*}$
can be divided into two groups: 
\bdm
\ba
&M^\omega_2 y \le  -M^\omega_1x^* - M^\omega_3, \\
&N_{\nu^*}y\le{g}_{\nu^*}. 
\ea
\edm
The matrices $M^\omega_1$, $M^\omega_2$
and $M^\omega_3$ in the first group of constraints are coefficients of convexification
cutting planes satisfying that $M^\omega_1x+M^\omega_2 y\le{M^\omega_3}$ are valid
for the set $\vartheta^\omega=\{(x,y^\omega):\;g_i(x,y^\omega)\le0\;\forall i\in\mtc{I},\;y^\omega\in\mtc{Y}\}$,
and these matrices are independent of nodes in the branch-and-bound tree.
The second group of constraints represents the simple bounds that specify the node $\nu^*$.
Since $(x^*,y^{\omega*})\in\vartheta^\omega$, it implies that 
$M^\omega_2 y^{\omega*}\le -M^\omega_1x^*-M^\omega_3$, i.e., $y^{\omega*}$
satisfies the first group of constraints. Furthermore, by the definition of node $\nu^*$, 
one has $(x^*,y^{\omega*})\in P^\omega_{\nu^*}(\hat{x})$, indicating that $y^{\omega*}$
should satisfies that simple-bound constraints $N_{\nu^*}y\le{g}_{\nu^*}$. This shows 
that $y^{\omega*}$ is a feasible solution to \eqref{opt:Sub(x*,w)_node}.  \qed

\section{A Numerical Example of the Cutting-Plane Algorithm}
\label{sec:num}  

Let us consider the following TSS-MICP:
\beq\label{eq:toy-example-TSS}
\ba
&\min_{x_1,x_2\in\{0,1\}}\; 0.9x_1+1.11x_2 + \E_\xi[\mQ(x,\xi^\omega)] \\
&\quad\textrm{ s.t.}\;\; 3x_1+2x_2\ge 2, 
\ea
\eeq
where there are only two scenarios $\Omega=\{\omega_1,\omega_2\}$ with
equal probability. The recourse function at the two scenarios is given as
\beq\label{opt:scenario1}
\ba
\mQ(x,\xi^{\omega_1})=&\min_{y_{11},y_{12}\in\mathbb{Z}_+}\;0.5y_{11}+y_{12} \\
&\textrm{ s.t. } -2y_{11}-[\log(1+e)] y_{12} + \log(1+e^{y_{11}+y_{12}}) \le x_1+2x_2-1. 
\ea
\eeq  
\beq\label{opt:scenario2}
\ba
\mQ(x,\xi^{\omega_2})=&\min_{ y_{21},y_{22}\in\mathbb{Z}_+}\;y_{21}+y_{22} \\
&\textrm{ s.t. } -[\log(1+e)](y_{21}+y_{22})+\log(1+e^{y_{21}+y_{22}})\le x_1+x_2-1.
\ea
\eeq
The master problem at Iteration 1 is simply 
\bdm
\min\;0.9x_1+1.11x_2+\eta\quad \textrm{s.t.}\;3x_1+2x_2\ge 2,\;\eta\ge0,\;x_1,x_2\in\{0,1\}.
\edm
The relaxation problem yields the fractional solution $(x_1,x_2)=(2/3,0)$.
We further add {an optimality cut $x_1\ge1$ (since  $x_1 = 1$ is feasible and objective coefficient of $x_1$ is smaller)} and {re-solve} the master problem, which 
gives the optimal solution $(x_1,x_2)=(1,0)$. Substituting the first-stage
solution $(x_1,x_2)=(1,0)$ into \eqref{opt:scenario1} and solving the continuous
relaxation sub-problem yields a fractional solution $(y_{11},y_{12})=(0.5,0.48)$ which 
is on the curve $-2y_{11}-[\log(1+e)] y_{12} + \log(1+e^{y_{11}+y_{12}})=0$.
We add the tangent cut $1.272y_{11}+0.586y_{12}\ge 0.918$ 
induced by this fraction solution to the relaxation sub-problem.
Then we solve the following relaxed MILP:
\bdm
\min\; 0.5y_{11}+y_{12} \quad\textrm{s.t. }
-1.272y_{11}-0.586y_{12}+ 0.918\le 0,\; y_{11},y_{12}\in\mathbb{Z}_+,
\edm
by adding a cut $y_{11}+y_{12}\ge 1$ to the linear relaxation of 
the above MILP {for feasibility (since 0 $\ngeq 0.918$}, which leads to the integral solution $(y_{11},y_{12})=(1,0)$
of \eqref{opt:scenario1} for the given first-stage value $(x_1,x_2)=(1,0)$.
Similarly, substituting $(x_1,x_2)=(1,0)$ into \eqref{opt:scenario2}, 
and solving the continuous relaxation gives the solution $(y_{21},y_{22})=(0.5,0.5)$.
We add a tangent cut $y_{21}+y_{22}\ge 1$ induced by this {fractional} solution
to the relaxation sub-problem. Then we solve the following relaxed MILP:
\bdm
\min\; y_{11}+y_{12} \quad\textrm{s.t. }
-y_{11}-y_{12}+1\le 0,\; y_{11},y_{12}\in\mathbb{Z}_+,
\edm
which leads to an integral optimal solution $(y_{21},y_{22})=(1,0)$.

{Since $x_1 \geq 1$, always $x_1 + 2 x_2 - 1\geq 0$ for any value of $x_2$.} Hence, that the eventual linear program that leads to the integral solution
$(y_{11},y_{12})=(1,0)$ of \eqref{opt:scenario1} is:
\bdm
\min\; 0.5y_{11}+y_{12} \quad\textrm{s.t. }
-y_{11}-y_{12}+ 1\le x_1+2x_2-1,\; y_{11},y_{12}\ge 0.
\edm
The Benders' cut associated with this scenario problem is $\eta^{\omega_1}\ge 1-0.5x_1-x_2$. Similarly, the Benders' cut associated with the scenario $\omega_2$ problem is
$\eta^{\omega_2}\ge 2-x_1-x_2$. The aggregated Benders' cut is $\eta\ge 1.5-0.75x_1-x_2$. Adding this aggregated Benders' cut to the master problem gives:
\bdm
\min\;0.9x_1+1.11x_2+\eta\quad \textrm{s.t.}\;3x_1+2x_2\ge 2,
\;\eta\ge 1.5-0.75x_1-x_2,\;x_1,x_2\in\{0,1\},\; \eta\ge0.
\edm
Solving the above updated master problem gives the first-stage
solution $(x_1,x_2)=(0, 1)$ with objective 1.61 since objective value with previous iteration solution $(x_1,x_2)=(1, 0)$ is 1.65. Since only two solutions are possible, the optimal solution is $(x^*_1,x^*_2)=(0,1)$.

\qrevision{Now let us consider the distributional robust version of ~\eqref{eq:toy-example-TSS} with Wasserstein ambiguity set as defined in \eqref{eq:wasserstein}. From \eqref{opt:scenario1} and \eqref{opt:scenario2}, in our setting, random parameters $\xi_1 = (0.5, -2, 2)$ and $ \xi_2 = (1, -\log(1 + e), 2)$. We further set $\rho = 0.13$. In contrast to the two-stage setting, after obtaining the scenario-specific Benders' cut
$\eta^{\omega_1}\ge 1-0.5x_1-x_2$ and $\eta^{\omega_2}\ge 2-x_1-x_2$, we solve the optimization problem to obtain the {worst-case} probability: $\max_{(p^{\omega_1}, p^{\omega_2}) \in \mathcal{P}^W} p^{\omega_1} (1 - 0.5 x_1 - x_2) + p^{\omega_2} (2 - x_1 - x_2) = \max_{(p^{\omega_1}, p^{\omega_2}) \in \mathcal{P}^W} 0.5 p^{\omega_1} + p^{\omega_2}$. {worst-case} probability we obtain is $(p^{\omega_1}, p^{\omega_2}) = (0.4, 0.6)$ (in comparison to $(0.5, 0.5)$ without ambiguity incorporation) which gives us the aggregated optimality cut $\eta \geq 1.6 - 0.8 x_1 - x_2$. The aggregated optimality cut obtained through worst-case scenario probabilities is then added to the master problem: 
\bdm
\min\;0.9x_1+1.11x_2+\eta\quad \textrm{s.t.}\;3x_1+2x_2\ge 2,
\;\eta\ge 1.6-0.8 x_1-x_2,\;x_1,x_2\in\{0,1\},\; \eta\ge0.
\edm
Solving the above updated master problem gives the first-stage
solution $(x_1,x_2)=(1, 0)$ with objective 1.70 while the two-stage-stochastic solution was $(x_1,x_2)=(0, 1)$ with objective $1.61$. Thus, due to consideration of the worst-case scenario, the optimal solution has changed with a larger objective value.}

\section{Solving MICP, JMICP and TSS-MICP to a Desired Tolerance}
\label{sec:micp-epsilon}

\subsection{Analysis for the MICP problem}
\label{sec:error-single-stage}
We have shown that the algorithms given in the previous sections can solve the 
{mixed-integer} convex and conic programs (MICP) to optimality by purely generating cutting
planes in a finite number of iterations. This result was established under the assumptions that oracles for solving the projection problem (\ref{eqn:IterateProjection}), the convex optimization problem (\ref{opt:convex-FixedInteger}), and the supporting inequalities from Proposition~\ref{prop:lin-approx-convex-prog} are known. However, in practice, we may only be able to solve (\ref{eqn:IterateProjection}) and (\ref{opt:convex-FixedInteger}) to a desired precision. Thus, we now develop an algorithm that solves the mixed-integer convex program
\eqref{opt:convex-opt-linear-obj} with a linear objective to $\epsilon$-accuracy only. In fact, this allows us to simplify the steps given in Algorithms~\ref{alg:micp-oracle}. Some careful definitions of optimality, feasibility, and stability in the error-engaged case 
are necessary to depict the algorithm and the convergence results. These concepts are given in 
Definition~\ref{def:e-cont} to \ref{def:epsilon-micp}.
\begin{definition}\label{def:e-cont}
Consider a convex optimization problem:
\beq\label{opt:e-cont}
\min\;f(x)\;\textrm{ s.t. } \srevision{Ax=b,} \, x\in\mtc{C},
\eeq
where \srevision{$Ax=b$ represents linear equality constraints, }$\mtc{C}$ is a {full-dimensional} closed set in $\mbR^{l_1+l_2}$.
For any $\epsilon>0$,
define the set $\mtc{C}^{\epsilon}_+$ as
\bdm
\begin{aligned}
&\mtc{C}^{\epsilon}_+=\Set*{x\in\mbR^{l_1+l_2}}{\exists z\in\mtc{C}\;\textrm{s.t.}\;\dist(x,z)\le\epsilon}. \\
\end{aligned}
\edm 
\newline
%(i) The problem is $\epsilon$-feasible (infeasible) if 
%$\mtc{C}^{\epsilon}_-$ is non-empty (empty). \newline
A point $x$ is an ($\epsilon$,$\epsilon^\prime$)-accurate
solution of \eqref{opt:e-cont}, if \eqref{opt:e-cont} is feasible,
$x\in\mtc{C}^{\epsilon^\prime}_+$ and $\textrm{OPT}-\epsilon\le f(x)\le\textrm{OPT}+\epsilon$,
where $\textrm{OPT}$ is the optimal value of \eqref{opt:e-cont}.
\end{definition}

\begin{definition}\label{def:epsilon-micp}
Consider a general mixed-integer programming problem 
\beq\label{opt:math-prog}
\min\;f(x)\quad\textrm{s.t.} \, \srevision{Ax=b,}\, \;x\in\mtc{C},\;x\in\mbZ^{l_1}\times\mbR^{l_2},
\eeq 
where $\mtc{C}$ is a {full-dimensional} closed set in $\mbR^{l_1+l_2}$.
Let $\epsilon$, $\epsilon^\prime$, and $\lambda$ be any three positive real numbers. 
Let $\mtc{C}^{\epsilon}_+$ be defined the same
as in Definition~\ref{def:e-cont}, and define $\mtc{C}^{\epsilon}_-$ as 
\bdm
\mtc{C}^{\epsilon}_-=\mtc{C}\setminus\cup_{x\in bd(\mtc{C})}B(x,\epsilon),
\edm
where $bd(\mtc{C})$ is the boundary of $\mtc{C}$ and $B(x,\epsilon)=\Set*{z\in\mbR^{l_1+l_2}}{\norm{x-z}\le\epsilon}$.
\newline
(i) The problem \eqref{opt:math-prog} is called  $\epsilon$-feasible (infeasible) if 
$\mtc{C}^{\epsilon}_-\srevision{\cap\{Ax=b\}}\cap(\mbZ^{l_1}\times\mbR^{l_2})$ is non-empty (empty). \newline
(ii) The problem is $(\epsilon,\lambda)$-stable if it is $\epsilon$-feasible and there exist $x^1$ and $x^2$
such that $x^1$ is an optimal solution of 
\beq\label{opt:C^e_+}
\min\;f(x)\quad\textrm{s.t.},\;\srevision{Ax=b},\;x\in\mtc{C}^{\epsilon}_+,\;x\in\mbZ^{l_1}\times\mbR^{l_2},
\eeq 
$x^2$ is an optimal solution of 
\beq\label{opt:C^e_-}
\min\;f(x)\quad\textrm{s.t.},\;\srevision{Ax=b},\;x\in\mtc{C}^{\epsilon}_-,\;x\in\mbZ^{l_1}\times\mbR^{l_2},
\eeq 
$x^1_Z = x^2_Z$ and $\norm{x^1-x^2}\le\lambda\epsilon$. \newline
(iii) A point $x\in\mbZ^{l_1}\times\mathbb{R}^{l_2}$ is an $(\epsilon,\epsilon^{\prime},\lambda)$-accurate
solution of \eqref{opt:math-prog}, if \eqref{opt:math-prog} is $(\epsilon^\prime,\lambda)$-stable,
$x\in\mtc{C}^{\epsilon^\prime}_+$, and $\textrm{OPT}-\epsilon\le f(x)\le\textrm{OPT}+\epsilon$,
where $\textrm{OPT}$ is the optimal value of \eqref{opt:math-prog}.
\end{definition}
For a geometric interpretation, the set $\mtc{C}^{\epsilon}_+$ is 
an $\epsilon$-covering of the set $\mtc{C}$, 
and the set $\mtc{C}^{\epsilon}_-$ can be thought 
of an epsilon-shrinkage of the set $\mtc{C}$.
We consider the following mixed-integer convex program with a linear objective function
\beq\label{opt:convex-opt-linear-obj}
\min\;c^\top x\quad\textrm{s.t.}\;\srevision{Ax=b},\;x\in\mtc{C},\;x\in\mbZ^{l_1}\times\mbR^{l_2}.
\eeq
We focus on analyzing the case where the convex optimization oracle
can only identify a solution within a tolerant accuracy of optimality and feasibility.  
We make the following assumptions about the oracle on the solution accuracy. 
\begin{assumption}\label{ass:convex-opt-oracle-errors}
For any $\epsilon,\epsilon^\prime>0$,
there exists an oracle that can find a $(\epsilon,\epsilon^\prime)$-accurate solution of a general
convex optimization problem \eqref{def:e-cont} given that \eqref{def:e-cont} is feasible.
\end{assumption} 
\srevision{Note that, for simplicity, under Assumption~\ref{ass:convex-opt-oracle-errors} we do not consider the numerical error in satisfying the linear equality constraint within the feasibility tolerance. Furthermore, throughout the analysis, it is assumed that a linear program can be solved exactly.} By utilizing the oracle provided in Assumption~\ref{ass:convex-opt-oracle-errors}, 
Algorithm~\ref{alg:micp-e} is developed for solving \eqref{opt:convex-opt-linear-obj} 
to $\epsilon$-accuracy. The algorithm is a modification of Algorithm~\ref{alg:micp-oracle}
by removing the polishing step (Step 3), as it is not required to find an exact optimal solution
in this case. The algorithm is essentially an outer approximation algorithm,
where at each iteration, a mixed-integer program obtained from the previous outer approximations is solved. 
The outer approximation is tightened by solving a projection problem approximately. 
The projection problems are defined at the solutions of the mixed-integer linear programs.

\qrevision{Consider a point $v$ outside $\mtc{C}$. If we project $v$ on $\mtc{C}$ by solving the projection problem with an oracle involving errors, we can only obtain an approximated projection point $x^\epsilon$.
The cutting plane that passes $x^\epsilon$ can be written as 
\bdm
\{x:\;(v-x^\epsilon)^\top(x-x^\epsilon)=0\}.
\edm
Note that this cutting plane is not tangent to $\mtc{C}$ in general due to errors.
See Figure~\ref{fig:approx-proj} for example.
We are concerned about the case where the cutting plane intersects with $\mtc{C}$,
and want to estimate the maximum depth by which it may cut $\mtc{C}$.
To be more precise, the maximum depth it may cut $\mtc{C}$ is given
by the following quantity:
\beq\label{eqn:rmax}
r_{\max}=\underset{z\in\mtc{C}\cap V^-}{\max}\;\dist(z,V^+),
\eeq
where $V^{+(-)}=\Set*{x\in\mbR^{l_1+l_2}}{(v-\xe)^\top (x-\xe)\le(\ge)0}$
with the convention that $r_{\max}=0$ if $\mtc{C}\cap V^-=\emptyset$.
The following lemma provides an upper bound on $r_{\max}$. 
}

\begin{algorithm}
	\caption{An Algorithm for solving a {mixed-integer} convex program to a given accuracy.}
	\label{alg:micp-e}
	\begin{algorithmic}[1]
	\State{\textbf{Input}: accuracy control parameters $\epsilon_1,\epsilon_2>0$.}
	\State{Set $n\gets 1$, $flag\gets 0$ and $\mI^{(0)}\gets\emptyset$.}
	\While{$flag=0$} %\label{lin:while_Alg6}
		\State{(Start iteration $n$.)}
		\State{\textbf{Step 1:} Solve the following master problem to optimality and let $x^{(n)}$ be the solution.}
		\State{If it is feasible}
		\beq\label{opt:micp-master-e}
		\ba
		&\min\; c^\top x \\
		&\text{ s.t. } \srevision{Ax=b},\\
         &\qquad (x^{(k)}-z^{(k)})^{\top}x\le (x^{(k)}-z^{(k)})^{\top}z^{(k)} \qquad\forall k\in [n-1], \\
		&\qquad x\in\mathbb{Z}^{l_1}\times\mathbb{R}^{l_2},
		\ea
		\eeq
		\State{where $(x^{(k)}-z^{(k)})^{\top}x\le (x^{(k)}-z^{(k)})^{\top}z^{(k)}$ is the projection cut
		generated at $k$-th iteration .}
		\State{If the master problem \eqref{opt:micp-master-e} is not feasible, then stop and return infeasible status.}
		\If{$x^{(n)}\in\cal C$} %\label{lin:x_in_C}
			\State{Stop and return $x^{(n)}$ as an optimal solution of \eqref{opt:convex-opt-linear-obj};}
		\Else
			\State{Continue.}
		\EndIf
		\State{\textbf{Step 2:} Solve (\ref{eqn:IterateProjection}) to $(\epsilon_1,\epsilon_1)$-accuracy. Let $z^{(n)}$ and $d^{(n)}$
		be {respectively} a $(\epsilon_1,\epsilon_1)$-accurate solution (Definition~\ref{def:e-cont}) and objective of (\ref{eqn:IterateProjection}).} \label{lin:z^n_d^n}
		\If{$d^{(n)}\le\epsilon_2/2$}, \label{lin:termination_condition}
			\State{Stop and return $x^{(n)}$.}
		\Else
			\State{Add the separation cut $(x^{(n)}-z^{(n)})^{\top}x\le (x^{(n)}-z^{(n)})^{\top}z^{(n)}$ 
					to \eqref{opt:micp-master-e}.}%\label{lin:sep-ineq}
		\EndIf
		\State{$n\gets n+1$.}
	\EndWhile
	\end{algorithmic}
\end{algorithm}

\begin{lemma}\label{lem:e-accuracy-proj}
Let $\mtc{C}$ be a {full-dimensional} closed convex set in $\mbR^m$ and $v$
be a point in $\mbR^m\setminus\mtc{C}$.
Consider the projection problem
\beq \label{eqn:BasicProjectionProblem}
\min\;\norm{x-v}\quad\emph{s.t. } x\in\mtc{C}.
\eeq
Let $x^*$ and $d$ be the optimal solution and optimal value of \eqref{eqn:BasicProjectionProblem}, respectively. 
Let $\xe$ be a ($\epsilon,\epsilon$)-accurate solution of \eqref{eqn:BasicProjectionProblem} (Definition~\ref{def:e-cont}), 
with $\epsilon<\frac{d}{64}$. 
The following properties hold:\newline
(a) $\norm{\xe-x^*}\le2\sqrt{d\epsilon}+\epsilon$. \newline
(b) Let $u$ be a point satisfying $\norm{u-v}\le d-3\epsilon$,
then $(v-\xe)^\top (u-\xe)>0$, i.e., $u$ violates the approximated supporting inequality
$(v-\xe)^\top (x-\xe)\le0$ of $\mtc{C}$. \newline
(c) Suppose $\mtc{C}$ is bounded and $D:=\max_{y_1,y_2\in\mtc{C}}\norm{y_1-y_2}$ 
is the diameter of $\mtc{C}$. 
Then the maximum cutting depth $r_{\max}$ can be bounded as
\bdm
r_{\max}\le \frac{9(D+2d)\sqrt{d\epsilon}}{d}.
\edm
\end{lemma}
\begin{remark}
Note that in general the approximated projection point $x^\epsilon$ can be located inside $\mtc{C}$,
and in this case the $r_{\max}$ represents the maximum violated depth of the cut $(v-\xe)^\top (x-\xe)\le0$.
See Figure~\ref{fig:approx-proj} for an illustration.
\end{remark}
\begin{proof}
(a) Note that $x^*=\textrm{Proj}_{\mtc{C}}\;v$ and $d=\dist(v,\mtc{C})\qrevision{=\norm{v-x^*}}$. 
Since $\xe$ is an $(\epsilon,\epsilon)$-accurate solution, by definition we have
$\norm{\xe-v}\le d+\epsilon$ and $\dist(\xe,\mtc{C})\le\epsilon$.
It follows that
\bdm
\begin{aligned}
&(d+\epsilon)^2\ge\norm{\xe-v}^2=\norm{\xe-x^*+x^*-v}^2 \\
&=\norm{\xe-x^*}^2+\norm{x^*-v}^2+2\inp{\xe-x^*}{x^*-v} \\
&=\norm{\xe-x^*}^2+d^2+2\inp{\xe-z}{x^*-v}+2\inp{z-x^*}{x^*-v},
\end{aligned}
\edm 
where $z=\textrm{Proj}_{\mtc{C}}\;\xe$. 
Note that the \qrevision{the last term in the above inequality satisfies} 
$\inp{z-x^*}{x^*-v}\ge 0$ due to convexity.
Therefore, we have
\bdm
\begin{aligned}
(d+\epsilon)^2&\qrevision{\ge\norm{\xe-x^*}^2+d^2+2\inp{\xe-z}{x^*-v} }\\
&\qrevision{\ge\norm{\xe-x^*}^2+d^2-2\norm{\xe-z}\cdot\norm{x^*-v} } \\
&\qrevision{\ge\norm{\xe-x^*}^2+d^2-2\cdot\dist(\xe,\mtc{C})\cdot\dist(v,\mtc{C}) } \\
&=\norm{\xe-x^*}^2+d^2-2d\epsilon,
\end{aligned}
\edm 
which implies that $\norm{\xe-x^*}\le2\sqrt{d\epsilon}+\epsilon$.
\qrevision{Note that this is a bound on the distance between the approximated projection point $\xe$
and the exact projection point $x^*$ of $v$.}

(b) Note that we have
\bdm
\begin{aligned}
&(v-\xe)^\top (u-\xe)=\inp{v-\xe}{u-\xe} \\
&=\inp{v-\xe}{v-\xe} + \inp{v-\xe}{u-v}  \\
&\ge \norm{v-\xe}^2-\norm{v-\xe}\norm{u-v} \\
&\ge (d-\epsilon)^2-(d+\epsilon)\norm{u-v} \\
&\ge (d-\epsilon)^2-(d+\epsilon)(d-3\epsilon)=4\epsilon^2>0, \\
\end{aligned}
\edm
where the above derivation uses $d - \epsilon \leq \dist(v,x^\epsilon) \leq d + \epsilon$ following definition of $(\epsilon, \epsilon)-$accurate solution.
% $|\dist(v,\mtc{C})-\dist(x^\epsilon,\mtc{C})|\le \dist(v,x^\epsilon) \le |\dist(v,\mtc{C})+\dist(x^\epsilon,\mtc{C})|$
% in the above derivation.

(c) Since $(v-x^*)^\top (x-x^*)\le0$ is a supporting valid inequality of \qrevision{$x\in\mtc{C}$},
it follows that
\bdm
\mtc{C}\cap V^-\subseteq\Set*{x\in\mbR^{l_1+l_2}}{ (v-x^*)^\top (x-x^*)\le0,\;(v-\xe)^\top (x-\xe)\ge0},
\edm
\qrevision{by the definitions of $\mtc{C}$ and $V^-$.}
Note that if the plane $(v-\xe)^\top (x-\xe)=0$ does not intersect with $\mtc{C}$,
then $r_{\max}=0$ by definition. Let us consider the case in which this plane intersects with $\mtc{C}$.
Suppose $r_{\max}=\dist(y,z)$ where $y\in\text{bd}(\mtc{C})$ and $z=\textrm{Proj}_{(V^+)}y$ (illustrated in Figure~\ref{fig:approx-proj}).   
Note that in this case we have $x^*, x^\epsilon, y, z\in\mtc{C}$, 
which implies that $\norm{x^\epsilon - z}\le D$ by the definition of $D$.
We also note that $y$ is on the line that passes through $z$ and is perpendicular to the plane
$(v-\xe)^\top (x-\xe)=0$, i.e., $y\in\{x: x-z=t(v-x^\epsilon),\;t\in\mbR\}$. 
This line intersects with the exact projection plane $(v-x^*)^\top (x-x^*)=0$ at the point
\bdm
x^{\prime} = z-\frac{\inp{v-x^*}{z-x^*}}{\inp{v-x^*}{v-\xe}}(v-\xe).
\edm 
Since $y$ can be written as a convex combination of points $z$ and $x^{\prime}$,
we have
\bdm
\begin{aligned}
&r_{\max}=\norm{y-z}\le\norm{x^\prime - z}=\norm{\frac{\inp{v-x^*}{z-x^*}}{\inp{v-x^*}{v-\xe}}(v-\xe)} \\
&\le \frac{|\inp{v-x^*}{z-x^*}|}{\norm{v-x^*}(\norm{v-x^*}-\norm{x^*-x^\epsilon})}(\norm{v-x^*}+\norm{x^*-\xe})
\end{aligned}
\edm
We can evaluate the inner product in the numerator as follows
\bdm
\begin{aligned}
&\inp{v-x^*}{z-x^*}=\inp{v-\xe}{z-x^*}+\inp{\xe-x^*}{z-x^*} \\
&=\inp{v-\xe}{z-\xe} + \inp{v-\xe}{\xe-x^*}+\inp{\xe-x^*}{z-\xe}+\inp{\xe-x^*}{\xe-x^*} \\
&=\inp{v-\xe}{\xe-x^*}+\inp{\xe-x^*}{z-\xe}+\inp{\xe-x^*}{\xe-x^*},
\end{aligned}
\edm
where we have used the property $\inp{v-\xe}{z-\xe}=0$ to get the last equality.
Therefore, the following bound holds
\bdm
\begin{aligned}
&|\inp{v-x^*}{z-x^*}|\qrevision{\le\norm{\xe-x^*}\cdot(\norm{v-\xe}+\norm{z-\xe})+\norm{\xe-x^*}^2} \\
&\qrevision{\le\norm{\xe-x^*}\cdot(\norm{v-x^*}+\norm{x^*-\xe}+\norm{z-\xe})+\norm{\xe-x^*}^2 } \\
&\qrevision{\le(\norm{v-x^*}+\norm{\xe-x^*} + D)\norm{\xe-x^*} 
+\norm{\xe-x^*}^2   \quad\textrm{(using $\norm{x^*-\xe}\le{D}$)} } \\
&=(\norm{v-x^*}+2\norm{\xe-x^*}+D)\norm{\xe-x^*}.
\end{aligned}
\edm
We Claim that $\epsilon<\frac{d}{64}$ implies $\epsilon\le\sqrt{d\epsilon}$
and $2\sqrt{d\epsilon}+\epsilon\le\frac{d}{2}$. To prove the claim, we first verify the first inequality:
$\sqrt{d\epsilon}>\sqrt{64\epsilon^2}=8\epsilon>\epsilon$, and verify the second inequality: 
$\frac{d}{2}-\epsilon>\frac{d}{2}-\frac{d}{64}=\frac{31d}{64}\ge\frac{d}{3}
=\sqrt{d}\cdot\frac{\sqrt{d}}{3}>\sqrt{d}\cdot\frac{8\sqrt{\epsilon}}{3}>2\sqrt{d\epsilon}$.

Applying the results from Part $(a)$, we have the following bound for $r_{\max}$:
\bdm
\ba
r_{\max}&\le \frac{(d+D+4\sqrt{d\epsilon}+2\epsilon)(2\sqrt{d\epsilon}+\epsilon)}{d(d-2\sqrt{d\epsilon}-\epsilon)}
(d+2\sqrt{d\epsilon}+\epsilon) \\
&\le\frac{(d+D+d)\cdot 3\sqrt{d\epsilon}}{\frac{d^2}{2}}\cdot\frac{3d}{2} \\
&=\frac{9(D+2d)\sqrt{d\epsilon}}{d},
\ea
\edm
where the two inequalities in the claim are used to obtain the second inequality in the above derivation.  \qed
\end{proof}

\begin{figure}
\center
\includegraphics[scale=0.6,trim=4cm 13.5cm 5cm 4cm]{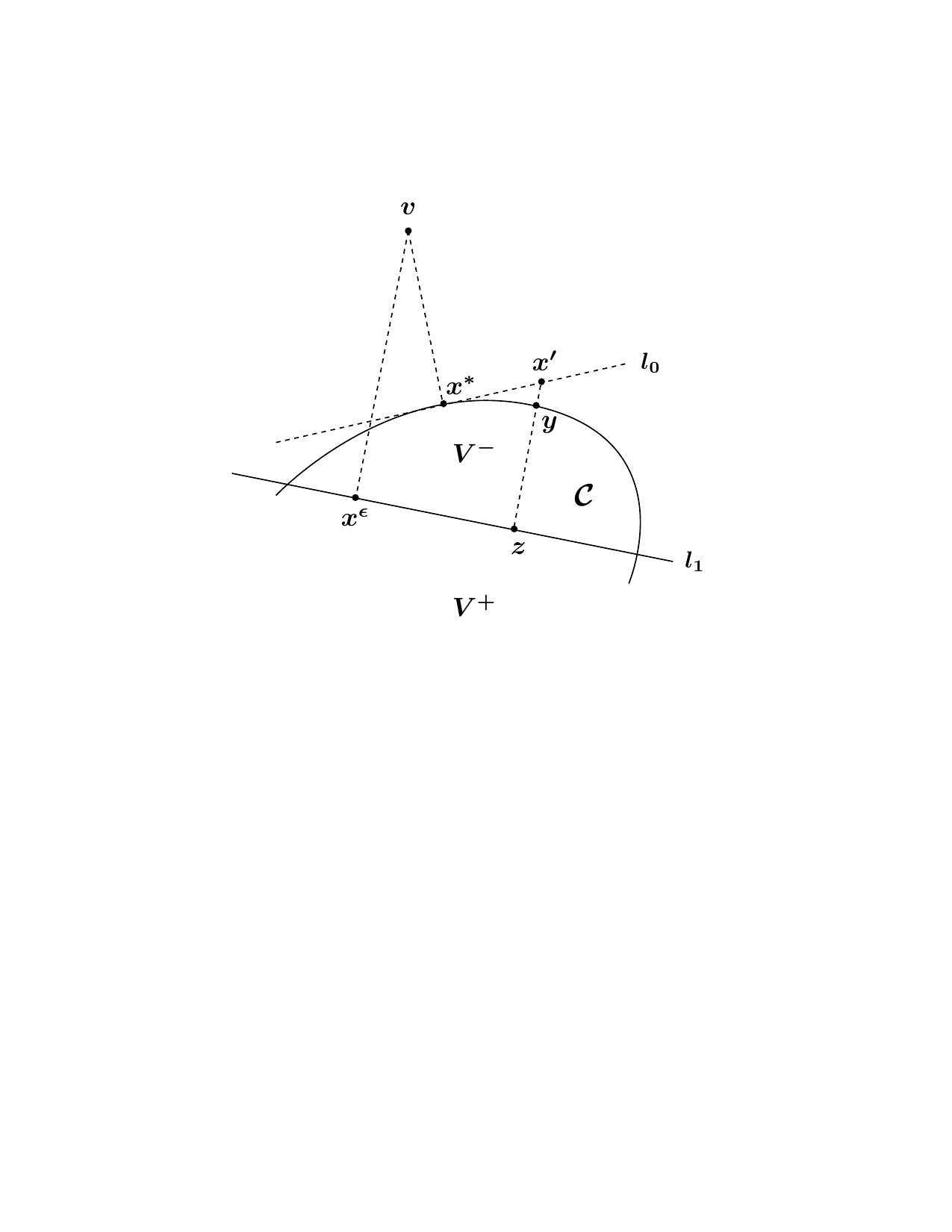}
\caption{\scriptsize Geometry associated with Lemma~\ref{lem:e-accuracy-proj}. The equations
of hyperplanes $l_0$ and $l_1$ are $(v-x^*)^\top(x-x^*)=0$ and $(v-x^\epsilon)^\top(x-x^\epsilon)=0$,
respectively. Note that the approximated projection point $x^\epsilon$ is located inside $\mtc{C}$
in this example.} \label{fig:approx-proj}
\end{figure}

\qrevision{The following theorem characterizes the performance of 
the cutting-plane Algorithm~\ref{alg:micp-e}
on solving the MICP \eqref{opt:e-cont} with a convex optimization oracle 
producing errors.}
\begin{theorem}\label{thm:e-lambda-micp}
Suppose $\mtc{C}$ is a full-dimensional closed and bounded convex set. 
Suppose $\mtc{C}$ is contained in a ball of diameter $D\ge 1$. 
Consider applying Algorithm~\ref{alg:micp-e}
to solve \eqref{opt:convex-opt-linear-obj}.
If we set $\epsilon_1=\epsilon^2_2$ 
(the parameters $\epsilon_1$ and $\epsilon_2$ are used in Algorithm~\ref{alg:micp-e})
and $\epsilon_2<\frac{1}{64}\min\{1,D\}$, then the following properties hold:
\begin{enumerate}
\item[(a)] Algorithm~\ref{alg:micp-e} terminates after a finite number of iterations.
\item[(b)] If the modified Algorithm~\ref{alg:micp-oracle} returns infeasible status, 
then \eqref{opt:convex-opt-linear-obj} is $(20D\sqrt{\epsilon_2})$-infeasible.
\item[(c)] If the modified Algorithm~\ref{alg:micp-oracle} returns a solution $x^{(m)}$, 
then $x^{(m)}$ is a \newline
$(20\lambda D\sqrt{\epsilon_2}\norm{c}, 20 D\sqrt{\epsilon_2},\lambda)$-accurate solution 
of \eqref{opt:convex-opt-linear-obj}
given that \eqref{opt:convex-opt-linear-obj} is \newline
$(20 D\sqrt{\epsilon_2},\lambda)$-stable.
\end{enumerate}
\begin{remark}\label{rmk:sol-error}
\qrevision{After rescaling the error parameter $\epsilon_2$, Part(c) of Theorem~\ref{thm:e-lambda-micp}
{can be restated} as: The returned solution is a $(\rho\lambda\epsilon,\epsilon,\lambda)$-accurate solution of
\eqref{opt:convex-opt-linear-obj} if \eqref{opt:convex-opt-linear-obj} is $(\epsilon,\lambda)$-stable,
where $\rho$ is a constant only depending on $\|c\|$. The choice of $D$ is rather flexible as long as 
it dominates the maximum possible value of each variable for attaining optimality.}
\end{remark}
\end{theorem}
\begin{proof}
(a) We prove by contradiction that the modified Algorithm~\ref{alg:micp-oracle} must terminate after a finite number of iterations. If not, let $x^*$ be a limit point of $\{x^{(n)}\}^\infty_{n=1}$,
and let $x^{(n_k)}\to x^*$ as $k\to\infty$.
Case 1: $\dist(x^*,\mtc{C})<\epsilon_2/4$. In this case, there exists an $n_k$ such that
$\dist(x^{(n_k)},\mtc{C})\le\epsilon_2/4$. Recall that by Line~\ref{lin:z^n_d^n} of Algorithm~\ref{alg:micp-e},
$z^{(n_k)}$ and $d^{(n_k)}$ are the $(\epsilon_1,\epsilon_1)$-accurate solution and objective of
\eqref{eqn:IterateProjection} at the main iteration $n_k$. \qrevision{By Definition~\ref{opt:e-cont}, we have
\beq\label{eqn:d(z,C)<e1}
\dist(z^{(n_k)},C)\le\epsilon_1.
\eeq}
It follows that
\beq\label{eqn:d<e2/2}
d^{(n_k)}=\norm{x^{(n_k)}-z^{(n_k)}}\le\dist(x^{(n_k)},C) + \dist(z^{(n_k)},C)\le\frac{\epsilon_2}{4} + \epsilon_1\le\frac{\epsilon_2}{2},
\eeq
\qrevision{where the last inequality holds due to $\epsilon_1=\epsilon^2_2\le\epsilon_2/4$.}
It implies that the algorithm should terminate by Line~\ref{lin:termination_condition} of 
Algorithm~\ref{alg:micp-e}, which leads to a contradiction. 
Case 2: $\dist(x^*,\mtc{C})\ge\epsilon_2/4$. 
In this case, there exists an $n_k$ such that
$\dist(x^{(n_k)},\mtc{C})\ge 3\epsilon_2/16$, and $\norm{x^{(n_i)}-x^{(n_j)}}\le\frac{\epsilon_2}{8}$
for all $n_i,n_j\ge n_k$. We have
\bdm
d^{(n_k)}=\norm{x^{(n_k)}-z^{(n_k)}}\ge\dist(x^{(n_k)},C) - \dist(z^{(n_k)},C)\ge \frac{3\epsilon_2}{16}-\epsilon_1.
\edm
and hence
\bdm
d^{(n_k)}-3\epsilon_1 \ge \frac{3\epsilon_2}{16}-4\epsilon_1 = \frac{\epsilon_2}{8} + \left(\frac{\epsilon_2}{16}-\epsilon_1\right)\ge \frac{\epsilon_2}{8}.
\edm
Therefore, we have 
\bdm
\norm{x^{(n_i)}-x^{(n_k)}}\le \frac{\epsilon_2}{8}\le d^{(n_k)}-3\epsilon_1, 
\edm
for any $n_i>n_k$.
Note that the cutting plane $(x^{(n_k)}-z^{(n_k)})^{\top}(x-z^{(n_k)})\le 0$
is added to the master problem. By Lemma~\ref{lem:e-accuracy-proj}(b),
we have 
\beq\label{eqn:violated_at_xni}
(x^{(n_k)}-z^{(n_k)})^{\top}(x^{(n_i)}-z^{(n_k)})>0.
\eeq
However, the cutting plane $(x^{(n_k)}-z^{(n_k)})^{\top}(x-z^{(n_k)})\le0$
has been added to the master problem at iteration $n_k$.
\eqref{eqn:violated_at_xni} indicates that this constraint is violated
by the master problem solution $x^{(n_i)}$, which is a contradiction.

(b) Suppose the modified algorithm terminates and returns an infeasible status. 
It implies that the master problem \eqref{opt:micp-master-e} 
is infeasible at some iteration $n$. 
Note that we have 
\beq\label{eqn:d>e2/2}
d^{(k)}>\frac{\epsilon_2}{2}
\eeq
for every $k\le n-1$, otherwise the algorithm will return 
a solution according to the termination criteria at Line~\ref{lin:termination_condition}.  
Let $d^{*(k)}$ denote the exact optimal objective of the projection problem 
at the $k^{\text{th}}$ iteration. We should have 
\beq\label{eqn:d*k}
\ba
d^{*(k)}&=\dist(x^{(k)},\mtc{C})\\
&\ge\dist(x^{(k)},z^{(k)})-\dist(z^{(k)},\mtc{C})\\
&=d^{(k)} - \dist(z^{(k)},\mtc{C})\\
&>\frac{\epsilon_2}{2}-\epsilon_1>\frac{\epsilon_2}{4}. \quad\textrm{\qrevision{(using \eqref{eqn:d(z,C)<e1} and \eqref{eqn:d>e2/2})}}
\ea
\eeq
\qrevision{Similar to \eqref{eqn:rmax},} we define the following violation depth of each added cutting plane
$(x^{(k)}-z^{(k)})^{\top}(x-z^{(k)})=0$ as
\bdm
r^{(k)}=\left\{
\begin{array}{ll}
\underset{z\in\mtc{C}\cap V^{(k)-}}{\max}\;\dist(z,V^{(k)+}) & \textrm{ if }\mtc{C}\cap V^{(k)-}\neq\emptyset, \\
0 &\textrm{ if }\mtc{C}\cap V^{(k)-}=\emptyset,
\end{array}
\right.
\edm
where $V^{(k)+}=\Set*{x\in\mbR^{l_1+l_2}}{(x^{(k)}-z^{(k)})^{\top}(x-z^{(k)})\le0}$
and
\newline 
$V^{(k)-}=\Set*{x\in\mbR^{l_1+l_2}}{(x^{(k)}-z^{(k)})^{\top}(x-z^{(k)})\ge0}$.
\qrevision{We need to derive an upper bound for $r^{(k)}$ to estimate the infeasibility.}
By Lemma~\ref{lem:e-accuracy-proj}(c), we have 
\bdm
r^{(k)}\le \frac{9(D+2d^{*(k)})\sqrt{d^{*(k)}\epsilon_1}}{d^{*(k)}} 
=9\sqrt{\epsilon_1}\left(\frac{D}{\sqrt{d^{*(k)}}}+2\sqrt{d^{*(k)}}\right).
\edm
Using $\frac{\epsilon_2}{4}\le d^{*(k)}\le D$ from \eqref{eqn:d*k}, it follows that
\bdm
\frac{D}{\sqrt{d^{*(k)}}}+2\sqrt{d^{*(k)}}\le\max\left\{3\sqrt{D},\;\frac{2D}{\sqrt{\epsilon_2}}+\sqrt{\epsilon_2}\right\}
=\frac{2D}{\sqrt{\epsilon_2}}+\sqrt{\epsilon_2},
\edm 
where we use the assumption $\epsilon_2\le D/64$ to get the last equality.
Therefore, we obtain the following inequality:
\bdm
r^{(k)}\le 9\sqrt{\epsilon_1}\left(\frac{2D}{\sqrt{\epsilon_2}}+\sqrt{\epsilon_2} \right)
=18D\sqrt{\epsilon_2}+9\epsilon_2\sqrt{\epsilon_2} \le 20D\sqrt{\epsilon_2},
\edm
where we use the setting $\epsilon_1=\epsilon^2_2$ to get the equality.
It implies that the set $\mtc{C}^{\delta-}\cap(\mathbb{Z}^{l_1}\times\mathbb{R}^{l_2})$ 
where $\delta=20D\sqrt{\epsilon_2}$ is contained in the feasible set  
of the master problem at each iteration. 
Since the master problem at iteration $n$ is infeasible,
\eqref{opt:convex-opt-linear-obj} is $(20D\sqrt{\epsilon_2})$-infeasible.
  
(c) If the algorithm terminates and returns a solution $x^{(m)}$,
we know that 
\bdm
\ba
\dist(x^{(m)},\mtc{C})&\le\dist(x^{(m)},z^{(m)}) + \dist(z^{(m)},\mtc{C}) \\
&=d^{(m)} + \dist(z^{(m)},\mtc{C}) \\
&\le\frac{\epsilon_2}{2}+\epsilon_1<20D\sqrt{\epsilon_2}. \quad\textrm{\qrevision{(using \eqref{eqn:d<e2/2} and \eqref{eqn:d(z,C)<e1})}}
\ea
\edm
It implies that $x^{(m)}\in\mtc{C}^{20D\sqrt{\epsilon_2}}_+$.
Let $P$ be the polytope obtained after relaxing the integral constraints
of the master problem at the termination of the algorithm. 
In the proof of Part (b), it has been shown that $\mtc{C}^{20D\sqrt{\epsilon_2}}_-\subseteq\mtc{C} \subseteq P$.
Consider the following optimal values:
\bdm
\begin{aligned}
c^{\top}x^{(m)} &= \min\;c^{\top}x\;\textrm{ s.t. } x\in P\,\srevision{\cap\{Ax=b\}}\cap(\mbZ^{l_1}\times\mbR^{l_2}), \\
\textrm{OPT}^- &= \min\;c^{\top}x\;\textrm{ s.t. } x\in \mtc{C}^{20D\sqrt{\epsilon_2}}_-\,\srevision{\cap\{Ax=b\}}\cap(\mbZ^{l_1}\times\mbR^{l_2}), \\
\textrm{OPT}^+ &= \min\;c^{\top}x\;\textrm{ s.t. } x\in \mtc{C}^{20D\sqrt{\epsilon_2}}_+\,\srevision{\cap\{Ax=b\}}\cap(\mbZ^{l_1}\times\mbR^{l_2}).
\end{aligned}
\edm
The following inequalities hold:
\bdm
\ba
&\textrm{OPT}^+\le c^{\top}x^{(m)}\le\textrm{OPT}^-, \\
&\opt^+\le\opt\le\opt^-,
\ea
\edm
where $\textrm{OPT}$ is the optimal objective of \eqref{opt:convex-opt-linear-obj}.
By Definition~\ref{def:epsilon-micp}(ii) and the assumption of $(20D\sqrt{\epsilon_2},\lambda)$-stability, 
there exist $x^+$ and $x^-$ satisfying
\begin{displaymath}
\begin{aligned}
&x^+\in\mtc{C}^{20D\sqrt{\epsilon_2}}_+\,\srevision{\cap\{Ax=b\}}\cap(\mbZ^{l_1}\times\mbR^{l_2}), \\
&x^-\in\mtc{C}^{20D\sqrt{\epsilon_2}}_-\,\srevision{\cap\{Ax=b\}}\cap(\mbZ^{l_1}\times\mbR^{l_2}), \\
&c^\top x^+=\opt^+, \\
&c^{\top}x^-=\opt^-, \\
&x^+_Z=x^-_Z=b \textrm{ for some } b\in\mbZ^{l_1}, \\
&\norm{x^+-x^-}\le 20\lambda D\sqrt{\epsilon_2}.
\end{aligned}
\end{displaymath}
It follows that $\opt^--\opt^+\le 20\lambda D\sqrt{\epsilon_2}\norm{c}$, and consequently
\bdm
|c^{\top}x^{(m)}-\text{OPT}|\le \opt^- - \opt^+ \le 20\lambda D\sqrt{\epsilon_2}\norm{c},
\edm
which concludes the proof.  \qed
\end{proof}

The above theorem addresses the quality of the solution in a rigorous manner. It says that one cannot 
purely talk about the accuracy of the solution and ignore stability in $\epsilon$-accuracy analysis, since it is possible that a small perturbation of the feasible region could cut off a near-optimal incumbent solution due to the discontinuity of the integral part of a solution. Theoretically, the theorem connects the level of accuracy and the level of stability. 	
We remark that the $(\epsilon,\lambda)$-stability is not easily verifiable exactly. 
This is because verification requires additional computation, which may bring additional numerical errors. However, it is possible to heuristically check this condition numerically. For example, one can solve problems \eqref{opt:C^e_+} and \eqref{opt:C^e_-}  
to get the optimal solutions $x^1$ and $x^2$, respectively and check if $\norm{x^1-x^2}$ is proportional to $\epsilon$. Note that the ratio $\norm{x^1-x^2}/\epsilon$ is related to the Lipschitz coefficients of constraint functions if they are Lipschitz continuous. On the other side, the parameters $(\epsilon,\lambda)$ used in the statement of the theorem are not necessarily pre-specified. Instead, they are adjustable to make the stability condition satisfied by the problem. One can think that there is a family of stability conditions parameterized by $(\epsilon,\lambda)$. If one of the conditions is satisfied by the problem, the corresponding parameters can be used to characterize the quality of the solution returned by the solver. Based on this interpretation, there is no need to identify the most restrictive stability condition, but it is possible to numerically find a less restrictive stability condition satisfied by the problem. For example, one can select a sufficiently small $\epsilon$ such that the problem is $\epsilon$-feasible and then select a sufficiently large $\lambda$ (can depend on $\epsilon$) to make 
the problem $(\epsilon,\lambda)$-stable. Again, there are numerical errors generated during this process of selecting $\epsilon$ and $\lambda$. Ultimately, verification of this condition requires an exact computation model.  

\subsection{Analysis for the JMICP problem}
\label{sec:error-JMICP}
For a given $x$, denote $\mtc{Q}(x)$ as the optimal objective of the following parametric MICP:
\beq\label{eqn:app-PMICP} 
\min_y\;h^\top y\quad\mathrm{ s.t. }\;\;\srevision{T x + W y = q,}\quad g_i(x,y)\le0\;\forall i\in\mtc{I},\;\;y\in\mtc{Y}\cap(\mbZ^{l_2}\times\mbR^{l_3}).
\eeq
\qrevision{
Algorithm~\ref{alg:micp-e} can be applied to solve the above PMICP to a desired accuracy. 
We can apply the results from Theorem~\ref{thm:e-lambda-micp} to analyze the algorithm's
performance on solving \eqref{eqn:app-PMICP}. }

\begin{definition}
A feasible solution $x$ of \eqref{opt:micp-xy} is $\epsilon$-optimal 
if $\textrm{Obj}(x)\le\textrm{Opt}+\epsilon$, where $\textrm{Obj}(x)$ is the
objective value of \eqref{opt:micp-xy} at $x$ and $\textrm{Opt}$ is the 
optimal objective of \eqref{opt:micp-xy}.
\end{definition}

\begin{assumption}\label{ass:PMICP-stable}
For any $\hat{x}\in\mtc{X}\cap\{0,1\}^{l_1}$, the convex set 
$\mtc{S}(\hat{x})=\{y\in\mbR^{l_2+l_3}:\;g_i(\hat{x},y)\le 0\;\forall i\in\mtc{I},\;y\in\mtc{Y}\}$ is in full dimension,
and the parametric MICP
$$\min\;h^\top y\quad\mathrm{ s.t. }\;\; \srevision{T \hat{x} + W y = q,}\;g_i(\hat{x},y)\le 0\;\forall i\in\mtc{I},\;y\in\mtc{Y}\cap(\mbZ^{l_2}\times\mbR^{l_3})$$
is $(\epsilon,\lambda)$-stable for some $\epsilon,\lambda>0$.
\end{assumption}

\qrevision{In the following {lemma}, we delve into how to quantify $\mtc{Q}(\hat{x})$
for any $\hat{x}\in\mtc{X}\cap\{0,1\}^{l_1}$ when 
Algorithm~\ref{alg:micp-e} is applied to solve \eqref{eqn:app-PMICP} with $x=\hat{x}$.
Intuitively, we can quantify $\mtc{Q}(\hat{x})$ using the primal and dual solutions of 
the tight LP-relaxation of \eqref{eqn:app-PMICP} ($x=\hat{x}$) generated by the algorithm and an error term.
Furthermore, since the PMICP will be solved repeatedly in a TSS-MICP with a changing $x$, 
we should also understand how the quantification works on a general $x$ even though
the quantities we used are derived from a specific $\hat{x}$. }
\begin{lemma}\label{lem:e-valid-PMICP}
Suppose that Assumptions~\ref{ass:convex-opt-oracle-errors} and \ref{ass:PMICP-stable} hold. 
Let $\hat{x}\in\mtc{X}\cap\{0,1\}^{l_1}$. Suppose Algorithm~\ref{alg:micp-e} is applied to  
the following parametric MICP problem:
\beq\label{eqn:lem-PMICP}
\min\;h^\top y\quad\mathrm{ s.t. }\;\;\srevision{T \hat{x} + W y = q,}\; g_i(\hat{x},y)\le 0\;\forall i\in\mtc{I},\;y\in\mtc{Y}\cap(\mbZ^{l_2}\times\mbR^{l_3})
\eeq
for solving it to the $(\rho\epsilon_1,\epsilon_1,\lambda)$-accuracy
for some sufficiently small parameters $\epsilon_1>0$ and some constant $\rho>0$. (This is achievable due to Theorem~\ref{thm:e-lambda-micp}.)
Suppose Algorithm~\ref{alg:micp-e} generates the following linear program at termination:
\beq\label{eqn:lem-LP}
\min_y\;h^\top y\quad\mathrm{ s.t. }\;\;\srevision{T \hat{x} + W y = q,}\; A\hat{x}+By+C\le 0,
\eeq
where $A$, $B$ and $C$ are some coefficient matrices. 
Then there exists some constant $\rho_1>0$ such that the following two inequalities hold:
\begin{align}
\mtc{Q}(x)&\ge \srevision{(-\lambda^{\top}T + \mu^\top A) x + (\lambda^{\top} q + \mu^\top C)} - \rho_1\lambda\epsilon_1\quad\forall x\in\mtc{X}\cap\{0,1\}^{l_1}, \label{eqn:param_valid1} \\
\mtc{Q}(\hat{x})&\le \srevision{(-\lambda^{\top}T + \mu^\top A) \hat{x} + (\lambda^{\top} q + \mu^\top C)} +\rho\epsilon_1, \label{eqn:param_valid2}
\end{align}
where $\lambda$ and $\mu\ge 0$ is the optimal dual vector of equality and inequality constraints in \eqref{eqn:lem-LP} and $\mtc{Q}(\cdot)$ is defined in \eqref{opt:y-micp}.
\end{lemma}
\begin{proof}
According to the mechanism of the cutting-plane method, the set $Ax+By+C\le 0$ of constraints
can be partitioned into two groups denoted as $A_1x+B_1y+C_1\le 0$ and $A_2x+B_2y+C_2\le 0$,
where the first group of constraints are projection cutting planes obtained by solving the projection problem
to a certain accuracy, while the second group of constraints are valid inequalities generated following the oracle in \cite{ChenKucukyavuzSen2011} for solving an MILP with pure cutting planes. 

Let $\mtc{S}$ be defined in \eqref{eqn:set_S}. 
Since the inequalities $A_1x+B_1y+C_1\le 0$ are generated by solving the projection problem
to a certain accuracy. It follows that there exists some constant $\rho_1>0$ such that
\begin{align}
&\mtc{S}^{\rho_1\epsilon_1}_{-}\subseteq\{(x,y): A_1x+B_1y+C_1\le 0\}. \label{eqn:S_in_Ax+B+C}
\end{align}
Furthermore, the oracle in \cite{ChenKucukyavuzSen2011}
generate the valid inequalities $A_2x+B_2y+C_2\le 0$ such that
\begin{align}
&\conv\{(x,y): \srevision{T x + W y = q},\;A_1x+B_1y+C_1\le 0,\; y\in\mbZ^{l_2}\times\mbR^{l_3}\} \nonumber \\
& \qquad \subseteq\{(x,y): \srevision{T x + W y = q},\; Ax+By+C\le 0\}.
\end{align}
Let $\wt{Q}(\hat{x})$ be the approximated objective. The accuracy control implies that 
\beq\label{eqn:Q(x0)_approx}
\ba
&\mtc{Q}(\hat{x})+\rho\epsilon_1\ge\wt{Q}(\hat{x})  \\
 &= \min_y\; h^{\top}y\; \{\textrm{s.t. } \srevision{T \hat{x} + W y = q},\; A\hat{x}+By+C\le 0\} \\
 &\ge \mtc{Q}(\hat{x})-\rho\epsilon_1.
 \ea
\eeq
The strong duality gives
\beq\label{eqn:Q(x0)_approx_dual}
\srevision{(-\lambda^{\top}T + \mu^\top A) \hat{x} + \lambda^{\top} q + \mu^\top C}=\min_y\; h^{\top}y\; \{\textrm{s.t. } \srevision{T \hat{x} + W y = q},\; A\hat{x}+By+C\le 0\}.
\eeq
Combining \eqref{eqn:Q(x0)_approx} and \eqref{eqn:Q(x0)_approx_dual} leads to \eqref{eqn:param_valid2}.
For any $x\in\mtc{X}\cap\{0,1\}^{l_1}$, Assumption~\ref{ass:PMICP-stable} about stability implies that
\beq\label{eqn:Qx+lambda_rho_e}
\ba
&\mtc{Q}(x)=\min_y\; h^{\top}y\; \{\textrm{s.t. } (x,y)\in\mtc{S}\;\srevision{\cap\{T x + W y = q\}},\; y\in\mbZ^{l_2}\times\mbR^{l_3}\}, \\
&\mtc{Q}(x) + \lambda\rho_1\epsilon_1\ge \min_y\; h^{\top}y\; \{\textrm{s.t. } (x,y)\in\mtc{S}^{\rho_1\epsilon_1}_-\;\srevision{\cap\{T x + W y = q\}},\;y\in\mbZ^{l_2}\times\mbR^{l_3}\}. 
\ea
\eeq
Using \eqref{eqn:S_in_Ax+B+C} and \eqref{eqn:Qx+lambda_rho_e} leads to the following inequalities:
\beq
\ba
\mtc{Q}(x) + \lambda\rho_1\epsilon_1&\ge \min_y\; h^{\top}y\; \{\textrm{s.t. } \srevision{T x + W y = q},\, A_1x+B_1y+C_1\le0,\;y\in\mbZ^{l_2}\times\mbR^{l_3}\} \\
&\ge \min_y\; h^{\top}y\;\{\textrm{s.t. }\srevision{T x + W y = q},\, Ax+By+C\le 0 \} \\
&\ge \srevision{(-\lambda^{\top}T + \mu^\top A) x + (\lambda^{\top} q + \mu^\top C)} ,
\ea
\eeq
which proves \eqref{eqn:param_valid1}.  \qed
%On the other hand, we have 
%\beq
%\mu^\top Ax_0 + \mu^\top C \ge  \mtc{Q}(x_0)-\rho\lambda\epsilon
%\eeq
\end{proof}

\begin{theorem}
Let Assumption~\ref{ass:convex-opt-oracle-errors} and \ref{ass:PMICP-stable} hold. 
Then Algorithm~\ref{alg:decomp-JMICP-e} can identify a 
$K\epsilon_1$-optimal solution of \eqref{opt:micp-xy} in a finite number of iterations,
where $K$ is a constant and $\epsilon_1$ is the accuracy control parameter used in
Algorithm~\ref{alg:decomp-JMICP-e}.
\end{theorem}
\begin{proof}
We first show that if the algorithm terminates, it will return a $K\epsilon_1$-optimal solution of \eqref{opt:micp-xy}.
Suppose the algorithm terminates at iteration $m$ and returns a solution $x^m$. 
The upper bound $U^m$ at the termination step is determined by the following equation
\beq\label{eqn:JMICP-e-Um}
U^m=c^\top x^m + \wt{\mtc{Q}}(x^m)\ge c^\top x^m + \mtc{Q}(x^m) - \rho_1\epsilon_1.
\eeq
The lower bound $L^m$ at the termination step is equal to the optimal value of the master problem:
\beq
\ba
L^m=&\min_{x\in\mX\cap\{0,1\}^{l_1}}\; c^\top x + \eta \\
&\qquad\text{ s.t. }\; \eta \ge a^k x+b^k \quad \forall k\in[m-1], \\
&\qquad\qquad R^{m-1}x + S^{m-1} \le 0,
\ea
\eeq
where $R^{m-1}x + S^{m-1} \le 0$ are valid for $x\in\mX\cap\{0,1\}^{l_1}$,
and by Lemma~\ref{lem:e-valid-PMICP} \eqref{eqn:param_valid1} 
the coefficients $a^k,b^k$ in the inequality $\eta\ge a^kx+b^k$ satisfy
\beq
\mtc{Q}(x)\ge a^kx+b^k-\rho_2\epsilon_1 \quad\forall x\in\mtc{X}\cap\{0,1\}^{l_1}
\eeq
for some constant $\rho_2$. Therefore,
\beq\label{eqn:JMICP-e-opt>Lm}
\ba
\textrm{Opt}&=\min_{x\in\mX\cap\{0,1\}^{l_1}}\; c^\top x + \mtc{Q}(x) \\
&\ge\min_{x\in\mX\cap\{0,1\}^{l_1}}\; c^\top x + \eta\quad\{\textrm{s.t. } \eta\ge a^kx+b^k-\rho_2\epsilon_1\} \\
&\ge L^m-\rho_2\epsilon_1.
\ea
\eeq
Combining \eqref{eqn:JMICP-e-Um} and \eqref{eqn:JMICP-e-opt>Lm} gives
\bdm
\ba
 U^m + \rho_1\epsilon_1\ge c^\top x^m + \mtc{Q}(x^m)\ge\textrm{Opt}\ge L^m-\rho_2\epsilon_1
\ea
\edm
and hence
\bdm
|c^\top x^m + \mtc{Q}(x^m)-\textrm{Opt}|\le U^m-L^m+(\rho_1+\rho_2)\epsilon\le (\rho_0+\rho_1+\rho_2)\epsilon_1,
\edm
where the last inequality is due to the termination criteria of Algorithm~\ref{alg:decomp-JMICP-e}. This shows that the returned solution $x^m$ satisfies the desired accuracy by setting
$K=\rho_0+\rho_1+\rho_2$. 

It suffices to show that Algorithm~\ref{alg:decomp-JMICP-e} can terminate in a finite number of iterations. We prove it by contradiction. Suppose it does not terminate, then there exist two iteration indices $k_1$ and $k_2$ with $k_1<k_2$ such that $x^{k_1}=x^{k_2}$, and
\begin{equation}\label{eqn:Lk2-epsilon}
\begin{aligned}
L^{k_2}&=c^{\top}x^{k_2}+\eta^{k_2} \\
&\ge c^{\top}x^{k_2} + a^{k_1}x^{k_2} + b^{k_1} \\
& = c^{\top}x^{k_1} + a^{k_1}x^{k_1} + b^{k_1} \\
& = c^{\top}x^{k_1} + \wt{\mtc{Q}}(x^{k_1}) \\
& = U^{k_1}=U^{k_2}.
\end{aligned}
\end{equation}
It shows that the algorithm should terminate.  \qed
\end{proof}

\begin{algorithm}
	\caption{A decomposition cutting-plane algorithm for \eqref{opt:micp-xy}.}
	\label{alg:decomp-JMICP-e}
	\begin{algorithmic}[1]
	\State{Set $m\gets 1$, $L\gets -\infty$ and $U\gets\infty$.}
	\While{$U-L>\rho_0\epsilon_1$}
		\State{Solve the current first-stage MILP problem \eqref{opt:xy-master} to optimality for the main iteration $m$
			   using the cutting-plane oracle from \cite{ChenKucukyavuzSen2011}, and get the optimal solution $x^m$. }
		\State{Set $L\gets obj\eqref{opt:xy-master}$, where $obj\eqref{opt:xy-master}$ represents the optimal objective of \eqref{opt:xy-master}.}
		\State{Substitute $x^m$ into the parametric MICP \eqref{opt:y-micp}, 
			   and apply Algorithm~\ref{alg:micp-e} to get a $(\rho_1\epsilon_1,\epsilon_1,\lambda)$-accurate solution
			   of \eqref{opt:y-micp} for some constant $\rho_1$. (This is achievable due to Theorem~\ref{thm:e-lambda-micp}.)}

        {
            \If{\eqref{opt:y-micp} is detected to be infeasible}
				\State{Add a feasibility cut \eqref{eqn:feasb-cut} to the first-stage master problem \eqref{opt:xy-master}.}
            \EndIf}
		\State{Set $U\gets c^\top x^m +  \wt{Q}(x^m)$, where $\wt{Q}(x^m)$ is the 
			 approximated objective of \eqref{opt:y-micp} given by Algorithm~\ref{alg:micp-e}.}
		\State{When Algorithm~\ref{alg:micp-e} terminates with an approximated optimal solution
			   to \eqref{opt:y-micp}, a linear program \eqref{opt:y-lp} is established. Generate
			   a Benders' cut \eqref{eqn:bender-cut} from the dual objective of \eqref{opt:y-lp},
			   and add it to \eqref{opt:xy-master} as $\eta\ge {a^m}^\top x+b^m$.}
		\State{Set $m\gets m+1$.}
	\EndWhile
	\State{\Return $x^m$ as an approximated optimal solution of \eqref{opt:micp-xy}.}
	\end{algorithmic}
\end{algorithm}

\subsection{Analysis for the TSS-MICP problem}
\label{sec:error-two-stage}

\begin{definition}
A feasible solution $x$ of (\eqref{opt:TSS-MICP}) is $\epsilon$-optimal 
if $\textrm{Obj}(x)\le\textrm{Opt}+\epsilon$, where $\textrm{Obj}(x)$ is the
objective value of (\eqref{opt:TSS-MICP}) at $x$ and $\textrm{Opt}$ is the 
optimal objective of (\eqref{opt:TSS-MICP}).
\end{definition}

\begin{theorem}
Let Assumption~\ref{ass:convex-opt-oracle-errors} hold. 
Suppose the master problem (\ref{opt:master}) and all scenario sub-problems (\ref{opt:second-stage})
at every iteration is $(\epsilon,\lambda)$-stable, 
and they are solved to $(\rho\epsilon,\epsilon,\lambda)$-accuracy using Algorithm~\ref{alg:micp-e}
for some constant $\rho>0$. (This is achievable due to Theorem~\ref{thm:e-lambda-micp}.)
Suppose the worst-case scenario detection problem (\ref{opt:worst-case-distr}) at every iteration is 
solved to $(\epsilon,\epsilon)$-accuracy (Definition~\ref{def:e-cont}).  
Then Algorithm~\ref{alg:decomp-BC-error} can identify a 
$K\epsilon$-optimal
solution of \eqref{opt:TSS-MICP} in a finite number of iterations, where $K>0$ is a constant.
\end{theorem}
\begin{proof}
The proof is similar to that of Theorem~\ref{thm:finite-convg}, but considering solution inaccuracy. We first show that if the algorithm terminates, it will return
a $K\epsilon$-optimal 
solution of (\eqref{opt:TSS-MICP}) for some $K>0$. Suppose the algorithm terminates at iteration $m$ and returns a solution $x^m$. The upper bound $U^m$ at the termination iteration
is determined by the following equation
\beq
U^m=c^{\top}x^m+\sum_{\omega\in\Omega}p^{m\omega}_{\epsilon}\wt{\mQ}(x^m,\xi^\omega),
\eeq
where $\wt{\mQ}(\cdot,\xi^\omega)$ is an approximated objective of $\sub(\cdot,\xi^\omega)$
as the output of applying Algorithm~\ref{alg:micp-e}, i.e., it is an approximated evaluation of $\mQ(\cdot,\xi^\omega)$.
The vector $p^{m}_{\epsilon}$ is a $(\epsilon,\epsilon)$-accurate solution of the convex program
$\max_{p\in\mP}\;\sum_{\omega\in\Omega}p^{\omega}\wt{\mQ}(x^m,\xi^\omega)$. 
By the assumption of the convex optimization oracle, we should have 
\beq
\ba
& |\wt{\mQ}(x^m,\xi^\omega)-\mQ(x^m,\xi^\omega)|\le\rho\epsilon \qquad\forall\omega\in\Omega,  \\
& \Big|\sum_{\omega\in\Omega}p^{m\omega}_{\epsilon}\wt{\mQ}(x^m,\xi^\omega)
-\underset{P\in\mathcal{P}}{\textrm{max}}\;\E_P[\wt{\mQ}(x^m,\xi)]\Big|\le\epsilon. 
\ea
\eeq
Let $\textrm{Obj}(x^m)$ be the objective value of (\eqref{opt:TSS-MICP}) evaluated at $x^m$.
We can bound the error $|U^m-\textrm{Obj}(x^m)|$ as follows:
\beq\label{eqn:U-Obj}
\ba
&|U^m-\textrm{Obj}(x^m)|=
\Big|\sum_{\omega\in\Omega}p^{\omega}_{\epsilon}\wt{\mQ}(x^m,\xi^\omega)
-\underset{P\in\mathcal{P}}{\textrm{max}}\;\E_P[\mQ(x^m,\xi)]\Big| \\
&\le \Big| \sum_{\omega\in\Omega}p^{\omega}_{\epsilon}\wt{\mQ}(x^m,\xi^\omega)
-\underset{P\in\mathcal{P}}{\textrm{max}}\;\E_P[\wt{\mQ}(x^m,\xi)]\Big|
+\Big|\underset{P\in\mathcal{P}}{\textrm{max}}\;\E_P[\wt{\mQ}(x^m,\xi)]
-\underset{P\in\mathcal{P}}{\textrm{max}}\;\E_P[\mQ(x^m,\xi)]\Big| \\
&\le\epsilon + \rho\epsilon.
\ea
\eeq
Let $\textrm{Opt}_m$ be the exact optimal objective value of \eqref{opt:term-master} at iteration $m$.
Since $(x^m,\eta^m)$ is a $(\rho\epsilon,\epsilon,\lambda)$-accurate solution of \eqref{opt:term-master},
we have $\textrm{Opt}_m\ge L-\rho\epsilon$, where $L=c^\top x^m+\eta^m$.
Furthermore, the strong duality implies that
\bdm
\ba
&\wt{\mQ}(x^k,\xi^\omega)=r^{k\omega\top}x^k+u^{k\omega} \quad \forall \omega, k.
\ea
\edm
Note that since each scenario sub-problem is a JMICP, we can apply Lemma~\ref{lem:e-valid-PMICP}
to get the following set of inequalities for some constant $\rho_1>0$:
\beq\label{eqn:inacc-dual-ineq-1}
\ba
&r^{k\omega\top}x+u^{k\omega}\le\mQ(x,\xi^\omega) + \rho_1\epsilon \quad \quad \forall \omega, k, x, \\
&r^{k\omega\top}x^k+u^{k\omega}\ge\mQ(x^k,\xi^\omega) - \rho_1\epsilon \quad  \forall \omega, k.
\ea
\eeq
The inequality \eqref{eqn:U-Obj} further indicates that
\beq\label{eqn:E<sumpQ<E}
\underset{P\in\mathcal{P}}{\textrm{max}}\;\E_P[\mQ(x^k,\xi)]
-(\rho+1)\epsilon
\le\sum_{\omega\in\Omega}p^{k\omega}_\epsilon\wt{\mQ}(x^k,\omega)\le
\underset{P\in\mathcal{P}}{\textrm{max}}\;\E_P[\mQ(x^k,\xi)]
+(\rho+1)\epsilon.
\eeq

Claim: Let $x^*$ be the optimal solution of \eqref{opt:TSS-MICP} and 
$\eta^\prime=\max_{P\in\mtc{P}}\E_P[\mQ(x^*,\xi)]+\rho_1\epsilon$. Then
$(x^*,\eta^\prime)$ is a feasible solution of \eqref{opt:term-master} at any master iteration.
\newline
Proof of the claim: it suffices to verify that the following inequality holds for each $k$
\beq
\max_{P\in\mtc{P}}\E_P[\mQ(x^*,\xi)]+\rho_1\epsilon\ge \sum_{\omega\in\Omega}p^{k\omega}_\epsilon r^{k\omega\top}x^*
+\sum_{\omega\in\Omega}p^{k\omega}_\epsilon u^{k\omega}.
\eeq
Indeed, we have 
\bdm
\ba
&\max_{P\in\mtc{P}}\E_P[\mQ(x^*,\xi)]+\rho_1\epsilon\ge \sum_{\omega\in\Omega}p^{k\omega}_\epsilon\mQ(x^*,\xi)+\rho_1\epsilon \\
&\ge \sum_{\omega\in\Omega}p^{k\omega}_\epsilon(r^{k\omega\top}x^*+u^{k\omega}-\rho_1\epsilon) + \rho_1\epsilon 
\quad (\textrm{using }\eqref{eqn:inacc-dual-ineq-1}) \\
&=\sum_{\omega\in\Omega}p^{k\omega}_\epsilon(r^{k\omega\top}x^*+u^{k\omega}),
\ea
\edm
which concludes the proof of the claim.

Let $\textrm{Opt}$ be the optimal objective of \eqref{opt:TSS-MICP}. The claim indicates that
\beq\label{eqn:opt>Lm-rho_e}
\ba
\textrm{Opt}&=c^\top x^*+\max_{P\in\mtc{P}}\E_P[\mQ(x^*,\xi)] \\
&= c^\top x^* + \eta^\prime -  \rho_1\epsilon \\
&\ge\textrm{Opt}_m -  \rho_1\epsilon \\
&=L^m- \rho_1\epsilon,
\ea
\eeq
where $L^m$ is the value of $L$ at the iteration $m$.
Therefore, the following inequalities hold:
\bdm
\ba
\textrm{Obj}(x^m)&\le U^m + (1+\rho)\epsilon \quad \textrm{(using \eqref{eqn:U-Obj})} \\
&\le L^m + \rho_0\epsilon + (1+\rho)\epsilon \quad \textrm{(termination criteria)} \\
&\le \textrm{Opt} + \rho_1\epsilon+ \rho_0\epsilon + (1+\rho)\epsilon \quad \textrm{(using \eqref{eqn:opt>Lm-rho_e})}.
\ea
\edm
The above shows that $x^m$ is a $(1+\rho_0+\rho+\rho_1)\epsilon$-optimal solution.

Next we show that the algorithm terminates in a finite number of iterations. 
Suppose it does not terminate, then there exist two iteration indices $k_1$
and $k_2$ with $k_1<k_2$ such that $x^{k_1}=x^{k_2}$. Then
\begin{equation}%\label{eqn:Lk2-epsilon}
\begin{aligned}
L^{k_2}&=c^{\top}x^{k_2}+\eta^{k_2} \\
&\ge c^{\top}x^{k_2}+\sum_{\omega\in\Omega}p^{k_1\omega}_{\epsilon}r^{k_1\omega\top}x^{k_2}
+\sum_{\omega\in\Omega}p^{k_1\omega}_{\epsilon}u^{k_1\omega} \\
&=c^{\top}x^{k_1}+\sum_{\omega\in\Omega}p^{k_1\omega}_{\epsilon}\wt{\mQ}(x^{k_1},\xi^\omega) \\
&=U^{k_1}=U^{k_2}, 
\end{aligned}
\end{equation}
which shows that the algorithm should terminate.  \qed
\end{proof}

\begin{algorithm}
	\caption{A decomposition cutting-plane algorithm for solving (\eqref{opt:TSS-MICP})  to a given accuracy.}
	\label{alg:decomp-BC-error}
	\begin{algorithmic}[1]
	\State{Initialization: $L\gets -\infty$, $U\gets \infty$, $m\gets 1$.}
	\While{$U-L>\rho_0\epsilon$} %\label{lin:U-L>0}
		\State{Solve the following approximated first-stage master problem} 
		\State{to obtain 
		$(\rho\epsilon,\epsilon,\lambda)$-accurate solution $(\eta^m,x^m)$
			using Algorithm~\ref{alg:micp-e}} %\label{lin:solve-master-e}
		\State{for some constant $\rho$: (This is achievable due to Theorem~\ref{thm:e-lambda-micp}.)}
		\beq\label{opt:term-master}
		\ba
		&\min_x\;c^{\top}x + \eta \\
		&\textrm{ s.t. } Ax\le b, \\
		&\qquad \eta\ge\sum_{\omega\in\Omega}p^{k\omega}_{\epsilon}r^{k\omega\top}x
		+\sum_{\omega\in\Omega}p^{k\omega}_{\epsilon}u^{k\omega} \quad\forall k\in[m-1], \\
		&\qquad x\in\mC\cap\{0,1\}^n,
		\ea
		\eeq
		\State{where $\{p^{k\omega}_\epsilon:\omega\in\Omega\}$ is a $(\epsilon,\epsilon)$-accurate solution 
		 of \eqref{opt:worst-case-distr} obtained by the convex optimization oracle. (See Line~\ref{lin:get-worst-case-P}.)
		 The coefficients $r^{k\omega}$ and $u^{k\omega}$ are specified in Line~\ref{lin:call-alg1}.}
		\State{Update the lower bound as $L\gets c^\top x^m+\eta^m$.}
				%\label{lin:low-bd-update}
		%\State{Set the current best solution as $x^*\gets x^m$.}
		\For{$\omega\in\Omega$:}
			\State{Solve the second-stage problem $\textrm{Sub}(x^m,\omega)$ to get a 
			$(\rho\epsilon,\epsilon,\lambda)$-accurate
			solution using Algorithm~\ref{alg:micp-e} for some constant $\rho$,
			and let $\wt{\mtc{Q}}(x^m,\xi^\omega)$ be the approximated objective given by the algorithm.
			(This is achievable due to Theorem~\ref{thm:e-lambda-micp}.)
			When Algorithm~\ref{alg:micp-e} terminates, it generates a LP
			relaxation in the form of (\ref{opt:two-stage-relax-full}) with $\hat{x}=x^m$,
			which yields an approximated Benders' inequality written as
			$\mtc{Q}(x^m,\xi^\omega)+\rho\epsilon\ge r^{m\omega\top}x+u^{m\omega}$.} \label{lin:call-alg1}
		\EndFor
		\State{Solve (\ref{opt:worst-case-distr}) with $\hat{x}=x^m$ (using the convex-optimization oracle) 
			to get a $(\epsilon,\epsilon)$-accurate solution but satisfying $\sum_{\omega\in\Omega}p^{m\omega}_\epsilon=1$.
			(Note that the normalization equality can be achieved by perturbing entries of the $p^m_\epsilon$ vector.)} 
		\State{which is denoted as $p^m_{\epsilon}=\Set*{p^{m\omega}_{\epsilon}}{\omega\in\Omega}$.} \label{lin:get-worst-case-P}
		\State{Generate the following aggregated Benders cut with iteration index $m$:} 
		\State{$\eta\ge\sum_{\omega\in\Omega}p^{m\omega}_\epsilon r^{m\omega\top}x
+\sum_{\omega\in\Omega}p^{m\omega}_\epsilon u^{m\omega}$, and add it to \eqref{opt:term-master}.}
		\State{Update the upper bound as $U\gets c^{\top}x^m+\sum_{\omega\in\Omega}p^{m\omega}_\epsilon\wt{\mQ}(x^m,\xi^\omega)$}.
		\State{$m\gets m+1$.}
	\EndWhile
	\State{Return $x^m$.}
	\end{algorithmic}
\end{algorithm}

\section{Numerical Experimentation Details}\label{sec:Num-Details}

\begin{table}[!hbtp]
\centering
\caption{List of index and set.}
\resizebox{\textwidth}{!}{%
\begin{tabular}{|cl|ll|}
\hline
\multicolumn{2}{|c|}{\textbf{Indices}} & \multicolumn{2}{c|}{\textbf{Sets}} \\ \hline
$i$ & process & $I(j)$ & set of processes that consume chemical $j$ \\ 
 $j$ & chemical & $O(j)$ & set of processes that produce chemical $j$\\ 
$s$ & production scheme & $R(j)$ & set of suppliers that provide chemical $j$\\ 
$r$ & supplier &  $PS(I)$ & set of production schemes for process $i$\\ 
$p$ & plant & $JM(i, s)$ & set of main products for production scheme $s$ of process $i$\\ 
$r$ & customer & $L(i, s)$ & set of chemicals involved in production scheme $s$ of process $i$ (linear)  \\ 
$\omega$ & scenarios &  $\bar{L}(i, s)$ & index set of chemicals involved in production scheme $s$ of process $i$ (nonlinear)\\ \hline
\end{tabular}
}
\label{tab:index-set}
\end{table}

\begin{table}[!hbtp]
\centering
\caption{List of model parameters.}
\resizebox{\textwidth}{!}{%
\begin{tabular}{|c|l|}
\hline
\textbf{Notation} & \textbf{Description}\\ \hline
$PU^U_{r p j}$ & maximum amount of chemical $j$ transported from supplier $r$ to plant $j$ \\
 $Q^U_{p i}$ & maximum capacity of process $i$ in plant $p$ \\ 
  $Q^c_{p i l}$ & $\ell^{th}$ discrete capacity choice for process $i$ of plant $p$\\ 
 $F^U_{p c j}$ & maximum amount of chemical $j$ transported from plant $j$ to customer $c$ \\
$\mu_{i j s w} $ & material balance coefficients for chemical $j$ in process $i$ under production scheme $s$ and scenario $\omega$ \\
$\rho_{i j s} $ & relative maximum production rate of main product $j \in JM(i, s)$ for production scheme $s$ \\
& for continuous flexible process $i$ reference to a base scheme $\Bar{s}$\\ 
$H_i$ & maximum time for which process $i$ is available for production \\ 
$D_{c j \omega}$ & demand of chemical $j$ from customer $c$ in scenario $\omega$\\ 
$\beta^S_{r j}$ & price for purchase of chemical $j$ from supplier $r$ \\ 
$\beta^C_i$ & variable cost of installation of process $i$ \\ 
$\alpha^C_i$ & fixed cost of installation of process $i$ \\ 
$\beta^{R P}_{r p}$ & variable cost for transporting chemical from supplier $r$ to plant $p$ \\
$\alpha^{R P}_{r p} $ & fixed cost for transporting chemical from supplier $r$ to plant $p$ \\
$\beta^{P C}_{p c} $ & variable cost for transporting chemical from plant $p$ to customer $c$ \\
$\alpha^{P C}_{p c} $ & fixed cost for transporting chemical from plant $p$ to customer $c$  \\
 % $S_{c j \omega} $ & slack variable for not satisfying the demand of chemical $j$ from customer $c$ in  scenario $\omega$ \\
 $\delta_{i s} $ & unit operating cost for production scheme $s$ for process $i$ \\
 % $\phi_{c j \omega}$ & penalty cost for not satisfying demand from customer $c$ for chemical $j$ in scenario $\omega$ \\
 $\tau_{\omega} $ & probability of scenario $\omega$ \\ \hline 
\end{tabular}
}
\label{tab:model-parameter}
\end{table}

\begin{table}[!hbtp]
\centering
\caption{List of decision variables.}
\resizebox{\textwidth}{!}{%
\begin{tabular}{|c|l|l|}
\hline
\textbf{Notation}& \textbf{Type} & \textbf{Description}\\ \hline
 $x_{pi} $ & Binary & 1 if process $i$ is installed in plant $p$ \\ 
$Q_{p i \ell} $ & Binary & capacity of process $i$ in plant $p$ with discrete value indicator index $\ell = {0, 1, 2, 3, 4}$\\ 
$PU_{rpj\omega}$ & Continuous & purchase amount  of chemical $j$ for plant $p$ from supplier $r$ in scenario $\omega$\\ 
$y^R_{r p \omega}$ & Binary & 1 if a chemical is transported from supplier $r$ to plant $p$ in scenario $\omega$ \\ 
$F_{pcj\omega}$ & Continuous & amount of chemical $j$ transported from plant $p$ to customer $c$ in scenario $\omega$\\ 
$y^C_{p c \omega}$ & Binary & 1 if a chemical is transported from plant $p$ to customer $c$ in scenario $\omega$  \\ 
$T_{p i j s \omega}$ & Continuous & length of time assigned to the production of main product $j$ for production scheme $s$ \\
& &  of process $i$ in plant $p$ in scenario $\omega$ \\
 $\theta_{p i j s \omega}$ & Continuous & amount of main product $j, j \in JM(i, s)$ produced from production scheme $s$ of process $i$ in plant $p$  \\
& &  in scenario $\omega$ \\
$W_{p i j s \omega} $ & Continuous & amount of chemical $j$ produced from production scheme $s$ of process $i$ in plant $p$  \\
& & in scenario $\omega$ \\ \hline
 % $S_{c j \omega} = $ slack variable for not satisfying the demand of chemical $j$ from customer $c$ in  scenario $\omega$ \\ \hline
\end{tabular}
}
\label{tab:model-decision-var}
\end{table}

\begin{table}[!hbtp]
\centering
\caption{Problem dimension.}
\setlength{\tabcolsep}{6pt} % Adjust column spacing
\resizebox{\textwidth}{!}{ % Auto-resize to fit within text width
\begin{tabular}{ccccccc}
\toprule
Supplier (r) & Plant (p) & Capacity ($\ell$) & Process (i) & Scheme (s) & Product (j) & Customer (c) \\
\midrule
4 & 3 & 5 & 4 & 4 & 6 & 4 \\
\bottomrule
\end{tabular}}
\label{tab:dimension-parameter}
\end{table}

\subsection{A model for chemical supply chain problem}\label{sec:chem-model}
\small
\textbf{Objective function}
\begin{align*}
& \min \;\; \text{Cost} = \sum_p \sum_i \left( \beta_i^c Q_{pi} + \alpha_i^c x_{pi} \right)  
+ \sum_\omega \tau_{\omega} \left( \sum_p \sum_{i \in S(p)} \delta_{is} \sum_{j \in JM(i,s)} \rho_{ijs} \theta_{pijs\omega} \right) \\
& + \sum_p \sum_j \sum_{r \in R(j)} \left( \beta_j^s + \beta_{rp}^{RP} \right) PU_{rpj\omega} 
+ \sum_p \sum_r \alpha_{rp}^{RP} y_{rp\omega}^R \\
& + \sum_p \sum_c \alpha_{pc}^{PC} P_{pc\omega}^C 
+ \sum_p \sum_c \sum_j \beta_{pc}^{PC} F_{pcj\omega}
%+ \sum_c \sum_j \phi_{cja\omega} S_{cja\omega}
\end{align*}

\textbf{First-stage constraints}
% \begin{enumerate} \item

 Assignment constraint: $\sum_{\ell} Q^c_{p i \ell} Q_{p i \ell} \leq Q^U_{p i} x_{p i}, \;\; \sum_{\ell} Q_{p i \ell} \leq 1 \quad \forall p, i$
% \end{enumerate}
% \item Cover inequality:
% If $\hat{\bs{Q}}$ indicates current value of $\bs{Q}$ variable
% $$\sum_{p, i} Q_{p i} (1 - \hat{Q}_{p i 0}) + (1 - Q_{p i})  \hat{Q}_{p i 1} \geq 1$$
% where $\hat{Q}_{p i 0}$ represents the current $Q_{p i}$ having value 0 while $\hat{Q}_{p i 1}$ represents those with value 1.
% \end{enumerate}

\textbf{Second-stage constraints}
\begin{enumerate}
\item Supplier to plant shipment: $PU_{r p j \omega} \leq PU^U_{r p j} y^R_{r p \omega} \quad j, r, r\in R(j), p, \omega$

\item Plant to customer shipment: $ F_{pcj\omega} \leq F^U_{pcj} y^C_{p c \omega} \quad \forall j, p, c, \omega$

\item Demand satisfaction: $
\sum_{p} F_{p c j \omega} \geq D_{c j \omega}\;\; \forall c, j, \omega$
\item Flow/mass-balance: $$\sum_{r \in R(j)} PU_{r p j \omega} + \sum_{i \in O(j)} \sum_{s \in PS(i)} W_{p i j s \omega} = \sum_{c} F_{p c j \omega} + \sum_{i \in I(j)} \sum_{s \in PS(i)} W_{p i j s \omega} \; \forall p, j, \omega$$ 

\item Relation between main product and chemical (mass unit = production rate  $\times$ mass unit-time): $W_{p i j s \omega} = \rho_{i j s} \theta_{p i j s \omega} \quad \forall p, i, s \in PS(i), j \in JM(i, s), \omega$
\item Capacity bound in mass unit-time:
\begin{align*}
    & \sum_{s \in PS(i)} \sum_{j \in JM(i, s)} \theta_{p i j s \omega} \leq H_i\; Q_{p i}, \quad \forall p, i, \omega
\end{align*}
\item Linear:
$$W_{p i j s \omega} = \mu_{i j s \omega} W_{p i j' s \omega} \quad \forall p, i, j \in L(i, s), j' \in JM(i, s), s\in PS(i), \omega$$
\item Nonlinear production rate: $$ a_{p i s} W^{\gamma_1}_{p i j s \omega} W^{\gamma_2}_{p i j s \omega} \geq \mu_{i j s \omega} W_{p i j' s \omega} \quad \forall p, i, j \in \bar{L}(i, s), j' \in JM(i, s), s\in PS(i), \omega,$$
where $\gamma_1 + \gamma_2 =1$ and we set $\gamma_1 = 0.45$ and $\gamma_2 = 0.55$.
\end{enumerate}

\begin{figure}
\centering
\includegraphics[scale=0.4]{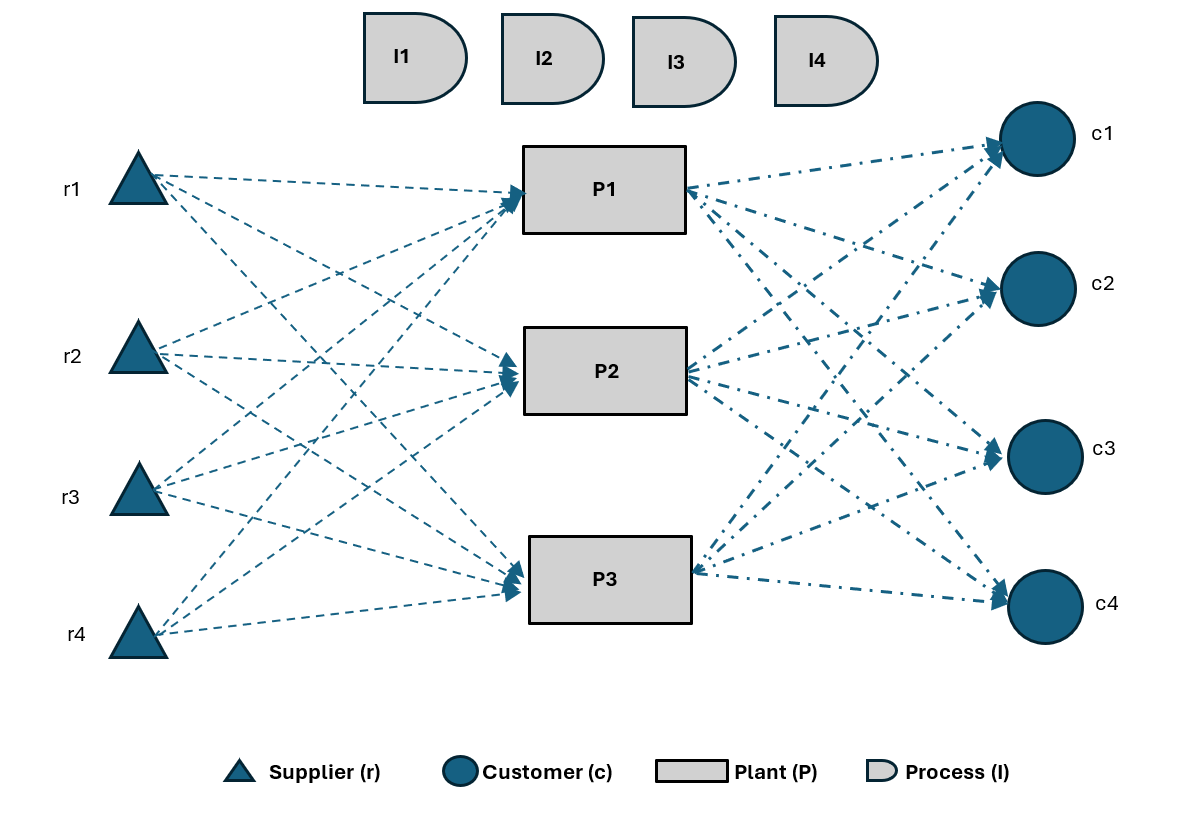}
\caption{Connection of suppliers and customers with plants}
\label{fig:ChemSC-Network}
\end{figure} 
\begin{figure}
\centering
\includegraphics[scale=0.4]{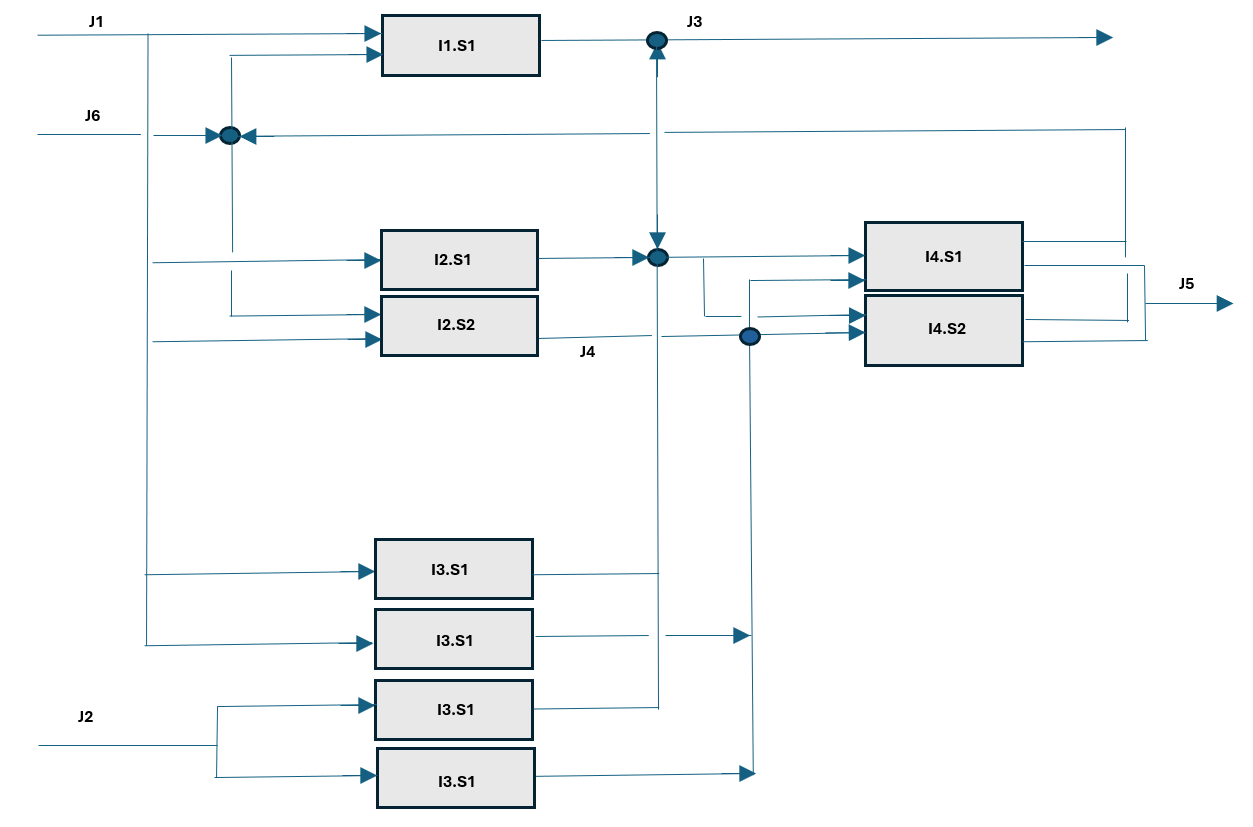}
\caption{Process($i$)-scheme($s$) connection in producing output ($j$) for each plant($p$)}
\label{fig:ChemSC}
\end{figure}

Given the first-stage decision to open a plant and its capacity to meet demand, the second-stage constraints (1-3) determine how to connect suppliers and customers to the opened plants so that the corresponding demand is satisfied at minimum cost. Figure~\ref{fig:ChemSC-Network} schematically shows the connections possible among the supply chain players. These connection decisions depend on the types and quantities of raw materials, their conversion rates to the final product, and compliance with capacity limits defined by the constraints (4-6). Each raw material supplied must pass through specific processing pathways to produce the final output, as illustrated in Figure~\ref{fig:ChemSC}. In certain processing units, production follows a linear rate, while in others, a nonlinear rate applies, as respectively indicated by constraint sets 7 and 8. The latter follows the Cobb-Douglas production function.

\subsection{Data generation}\label{sec:implementation details}
We follow the same data generation procedure followed by the literature \cite{li2019finite, li2018improved, norton1994strategic}. For adapted discrete first stage decision $Q_{p i \ell}$, we consider five possible values for each $Q^c_{p i} : 0, 25, 50, 75$ and $100$, with capacity bound $Q^U_{p i} = 100$. Additionally, in the Cobb-Douglas production function-based constraint, we set $a_{p i s} = 0.65$ while $\gamma_1 = 0.45$ and $\gamma_2 = 0.55$ (summation of $\gamma_1$ and $\gamma_2$ must be one) as specified in the model. Other than the demand $D_{c j \omega}$, we also consider the material balance coefficient $\mu_{i j s \omega}$ random and it can hold any value from the interval $[0.5, 1.5]$ with equal probability.

 % After incorporating these new linear constraints, Step-1 is re-executed. Looping between Step-1 and modified Step-2 (referred as the refinement phase) continues for ten times or until the difference between the objective values obtained from two consecutive step-1 reaches desired gap $(1 \times 10^{-04})$.
 
\end{document}